\theoremstyle{plain}
\newtheorem{Thm}{Theorem}[section]
\newtheorem{Cor}[Thm]{Corollary}
\newtheorem{Lem}[Thm]{Lemma}
\newtheorem{Prop}[Thm]{Proposition}
\theoremstyle{definition}
\newtheorem{Def}[Thm]{Definition}
\theoremstyle{remark}
\newtheorem{Rem}[Thm]{Remark}
\newcommand{\C}{\mathbb{C}}
\newcommand{\R}{\mathbb{R}}
\newcommand{\N}{\mathbb{N}}
\newcommand{\B}{\mathbb{B}}
\newcommand{\Aut}{\mathrm{Aut}}
\newcommand{\SAut}{\mathrm{SAut}}
\newcommand{\End}{\mathrm{End}}
\newcommand{\U}{\mathrm{U}}
\renewcommand{\O}{\mathcal{O}}
\numberwithin{equation}{subsection}
\begin{document}

\title[Holomorphic  Factorization  of Vector Bundle Automorphisms]%
{Holomorphic Factorization of Vector Bundle Automorphisms }
\author{George Ioni\c t\u a \and Frank Kutzschebauch}

\address{Departement Mathematik\\
Universit\"at Bern\\
Sidlerstrasse 5, CH--3012 Bern, Switzerland}

\email{george.ionita@math.unibe.ch}
\email{frank.kutzschebauch@math.unibe.ch}

\thanks{The research was partially supported by Schweizerische Nationalfonds Grant 200021-178730.}
\subjclass[2020]{Primary 32Q56; Secondary 19B14}

\date{\today}
\keywords{Oka principle, Stein spaces, vector bundle automorphisms, unipotent factorization, rings of holomorphic functions}
\setcounter{tocdepth}{3}
\begin{abstract}
We prove that any null-homotopic special holomorphic vector bundle automorphism of a holomorphic rank $2$ vector bundle $E$ over a Stein space $X$ can be written as a finite product of unipotent holomorphic vector bundle automorphisms all well as a finite product of exponentials.
\end{abstract}
\maketitle
\bibliographystyle{amsalpha}
\tableofcontents
\section{Introduction}
\label{introduction}

We consider a holomorphic vector bundle $E \to X$ over a Stein space $X$.
By $E_x$ we denote the fibre of the bundle $E$ over the point $x \in X$.
Let $F: X \to \SAut(E)$ be a special holomorphic vector bundle automorphism of $E$. Here special refers to the fact that the holomorphic function $\det F: X \to \C^\ast$ is constantly $1$. Recall from \cite{Ca} that $F$ is a global section in the automorphism bundle $\Aut(E)$, which is a holomorphic
fibre bundle with typical fibre $\operatorname{GL}_n(\C)$, and it is a group bundle in the sense of Cartan \cite{Ca}. The condition that the determinant of $F$ (recall that the determinant of an endomorphism of a vector space is defined independently of the choice of a basis of the vector space) can be formulated as $F$ being a section in the subbundle $\SAut(E) \subset \Aut(E)$, which is again a  group bundle, where the fibre $\operatorname{GL}_n(\C)$ has been replaced by $\operatorname{SL}_n(\C)$, i.e., the transition functions (acting by conjugation = outer automorphisms of $\operatorname{GL}_n(\C)$) remain the same. We have the corresponding bundles of Lie algebras $\End^0(E) \subset \End(E)$, where the superscript $0$ refers to the trace zero condition $\mathfrak{sl}_n(\C) : = \{A \in \mathfrak {gl}_n(\C) : \operatorname{trace}A = 0\}$. There is a global bundle map (i.e., fibrewise map), the exponential map $\exp : \End(E) \to \Aut(E)$ which restricts to the subbundles: $\exp : \End^0(E) \to \SAut(E)$. 

We call an automorphism of $E$, $\alpha \in \Aut(E)$, \textit{unipotent} if and only if $\alpha - \operatorname{Id}$ is nilpotent, viewed as an endomorphism of the fibres of $E$, i.e., for each point $x$, the linear map $\alpha (x) - \operatorname{Id}_{E_x} : E_x \to E_x$ is nilpotent. Clearly such $\alpha$ is necessarily of determinant $1$, i.e., $\alpha \in \SAut(E)$. We denote the subset of unipotent automorphisms by $\U(E)$. 

Our main result is the following:
\begin{Thm}\label{Thm:main} Let $X$ be a Stein space and $E \to X$ a rank $2$ holomorphic vector bundle over $X$. Then $F \in \SAut(E)$ is a (finite) product of unipotent holomorphic automorphisms $u_i \in \U (E)$, $i = 1, 2, \ldots, K$,
$$F(x) = u_1 (x) \cdot u_2 (x) \cdot \ldots \cdot u_K (x)$$
if and only if $F$ is null-homotopic.
\end{Thm}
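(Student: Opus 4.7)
Since $E$ has rank $2$, every fiberwise nilpotent endomorphism $N$ satisfies $N^2 = 0$, so for $u \in \U(E)$ the section $N := u - \operatorname{Id}$ is a trace-free nilpotent section of $\End(E)$, and $t \mapsto \exp(tN) = \operatorname{Id} + tN$ is a holomorphic path in $\SAut(E)$ from $\operatorname{Id}$ to $u$. Concatenating such paths along the factors yields a null-homotopy of any finite product of unipotents.

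\textbf{If direction: strategy.} My plan is to bundle-ify the Ivarsson--Kutzschebauch theorem: over a Stein space, every null-homotopic holomorphic map to $\operatorname{SL}_n(\C)$ factors as a bounded product of elementary unipotent matrices. Specialized to $n = 2$ and to the trivial bundle, this is the model case. To pass to a nontrivial rank $2$ bundle $E$, parameterize $\U(E)$ via the nilpotent cone bundle $\mathcal{N}(E) \to X$ whose fiber at $x$ is the trace-zero nilpotent cone in $\End^0(E_x) \cong \mathfrak{sl}_2(\C)$, an affine quadric with an $\operatorname{SL}_2$-action; a section $N$ of $\mathcal{N}(E)$ corresponds to the unipotent $\operatorname{Id} + N \in \U(E)$. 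The multiplication map $\mu_K : \mathcal{N}(E)^{\times K} \to \SAut(E)$, $(N_1, \ldots, N_K) \mapsto \prod_i (\operatorname{Id} + N_i)$, is a holomorphic map of fiber bundles over $X$, and the theorem reduces to showing that for some $K$, $\mu_K$ is surjective on global sections above the null-homotopic component of $\SAut(E)$.

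\textbf{Execution and main obstacle.} The next step is to apply the stratified Oka principle of Forstneri\v c in the spirit of Ivarsson--Kutzschebauch: stratify $\SAut(E)$ according to the local structure of factorizations, and inductively on strata show that $\mu_K^{-1}(F)$ admits a continuous section that deforms to a holomorphic one. The null-homotopy $F_t$ from $\operatorname{Id}$ to $F$, combined with the trivial factorization of $\operatorname{Id}$, produces the required continuous section as the endpoint of a homotopy of factorizations. The hardest step is this Oka-theoretic one: one must verify that $\mu_K$ (or a suitable lift) is a stratified elliptic submersion with dominating sprays on each stratum. The rank $2$ hypothesis makes the underlying group-theoretic picture explicit---$\operatorname{SL}_2(\C)$ is generated by its two opposite unipotent root subgroups, and every element admits an explicit Gauss- or Bruhat-type decomposition in terms of elementary unipotents---but globalizing to a nontrivial $E$ requires a careful gluing argument for the varying line-subbundle data that parameterize the factors, since different local trivializations of $E$ furnish different ``upper'' and ``lower'' triangular unipotent subgroups.
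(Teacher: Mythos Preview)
Your ``only if'' direction is correct and matches the paper's argument exactly.

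Your ``if'' direction is not a proof but a strategy sketch, and the strategy as stated has a genuine gap. You propose to work with the full nilpotent cone bundle $\mathcal{N}(E)$ and the multiplication map $\mu_K : \mathcal{N}(E)^{\times K} \to \SAut(E)$, then apply a stratified Oka principle to $\mu_K^{-1}(F)$. Two problems: first, the nilpotent cone in $\mathfrak{sl}_2$ is singular at the origin, so $\mu_K$ is not a submersion near the all-zero factorization of $\operatorname{Id}$, and you give no stratification that repairs this. Second, your construction of the continuous section---``the null-homotopy $F_t$ combined with the trivial factorization of $\operatorname{Id}$ produces the required continuous section as the endpoint of a homotopy of factorizations''---presupposes a homotopy lifting property for $\mu_K$ through exactly that singular locus, which you have not established and which fails in general. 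You yourself flag the remaining difficulty (``globalizing to a nontrivial $E$ requires a careful gluing argument''), but this is the entire content of the theorem, not a detail.

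The paper avoids these issues by \emph{not} working with the full cone bundle. Instead it first constructs, by hand, $n{+}1$ explicit holomorphic nilpotent pairs $(N_i^+, N_i^-)$ together with functions $f_i \in \O(X)$ having no common zeros, such that on $\{f_i \ne 0\}$ the bundle trivializes and the pair is conjugate to the standard upper/lower pair. This reduces the unknowns from sections of a singular cone bundle to scalar functions (the replicas $U_i^\pm(h) = \operatorname{Id} + h N_i^\pm$). The proof then proceeds in two genuinely separate Oka steps: first, using the Forster--Ramspott Oka-pair machinery, factor $F = G_1 \cdots G_{n+1}$ with $f_i^4 \mid (G_i - \operatorname{Id})$ and each $G_i$ strongly null-homotopic; second, for each $G_i$ separately, set up a fibration over $X$ with fibre coordinates $z_1, \ldots, z_{2k}$ (scalars, not cone elements), verify it is a stratified elliptic submersion, produce a continuous section by an explicit Vaserstein-type argument (including a new ``special Whitehead lemma'' over $\mathcal{C}^{\mathbb{C}}(X)$ that handles the interpolation along $\{f_i = 0\}$), and then holomorphize. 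The divisibility condition $f_i^4 \mid (G_i - \operatorname{Id})$ is precisely what makes the fibration have nonempty reachable fibres over $\{f_i = 0\}$; without the first step you cannot even begin the second. None of this structure is visible in your cone-bundle formulation.
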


In the case of a trivial vector bundle $E \cong X \times \C^n$, the special holomorphic automorphisms correspond to holomorphic maps $f : X \to \operatorname{SL}_n (\C)$. There exist "canonical" unipotent automorphisms, namely the maps into upper triangular unipotent matrices (i.e., with $1$'s on the diagonal) or into lower diagonal unipotent matrices. 

The factorization of matrices with entries in a ring $R$ into intertwining products of upper and lower triangular unipotent matrices with entries in the ring $R$ is a widely studied subject. In the present paper, $R = \O(X)$ is the ring of holomorphic functions on the Stein space $X$. 
For rings of polynomial functions the  most famous result is due to Suslin \cite{Suslin}, who proves that every matrix in $\operatorname{SL}_n (\C [z_1, z_2, \ldots z_k])$ is a product of upper/lower triangular matrices with entries in $\C[z_1, z_2, \ldots, z_k]$ if $n \ge 3$. 
Cohn in \cite{Cohn} had already proven that this is false for $\operatorname{SL}_2 (\C[z_1, z_2, \ldots z_k])$ and $k \ge 2$.
For rings of continuous functions, the problem was solved by Thurston and Vaserstein \cite{Thurston}. 
For the ring $\O(\C^n)$ of holomorphic functions on $\C^n$, the factorization problem was posed by Gromov in \cite{Gromov:1989}. He called it the "Holomorphic Vaserstein Problem" and it was solved for general Stein manifolds in \cite{Ivarsson:2012}. 
Since any vector bundle over an open Riemann surface is trivial, our main theorem for $X$ a Riemann surface is already contained in the main result from that paper, moreover it had already been proven in 1986 in \cite{Klein:1988}.

The problem is known in linear algebra under the name of LU-factorization.
It is  studied not only over fields, but also very extensively  over rings \cite{VS} and the question of existence of such a factorization is formalized in the definition of  the first (special) $K1$-group of a ring (see \cite{Bass}). 
For Banach algebras the factorization result is proved in the the textbook of Milnor \cite{Milnor:1971}. Factorization over rings is connected to the notion of Bass stable rank, a sufficient condition for a ring $R$ to solve the factorization for $\operatorname{SL}_n(R)$ for all $n \ge 2$ is that the Bass stable rank of the ring is equal to $1$. Examples of such rings in the context of complex analysis are for instance the disc algebra \cite{Jones:86} or more general the continuous functions on a bordered Riemann surface which are holomorphic in the interior \cite{Leiterer}.

For the above mentioned cases of rings of polynomial, continuous or holomorphic functions, one can view the factorization as a parametrized version of the Gauss elimination process and a necessary and sufficient condition for its solution is the null-homotopy of the map into $\operatorname{SL}_n(\C)$. 

The generalization from products of upper/lower triangular to products of unipotent matrices was used in \cite{KuSt} for proving that any matrix in $\operatorname{SL}_2(\O(\C))$ is a product of two exponentials. 

All results described above can be formulated in $K$-theoretic terms for matrices over rings. 
The situation of vector bundles considered in this paper goes beyond the framework of $K$-theory. 
The first generalization to the present situation of vector bundle automorphisms was done in the topological case by Hultgren and Wold \cite{WoHu}. They adapted Vaserstein's strategy from \cite{Vaserstein} to the setting of vector bundles. Unfortunately, we cannot rely on their  result since they do not prove that a product of "continuous replica"
of a fixed beforehand constructed set of unipotent holomorphic  automorphisms does the job. On the other side, our strategy from Subsections \ref{ssection:topologicalsuitablepairs} and \ref{ssection:topologicalULfactorization} can be adapted to the case of continuous vector bundles of any rank over finite dimensional topological manifolds to prove the result of Hultgren and Wold. We leave the workout of the details to the interested reader.  

The strategy of proof is as follows. Since our vector bundle $E$ is not necessarily trivial, the global unipotent automorphisms given by upper/lower triangular unipotent matrices in the case of plane maps $X \to \operatorname{SL}_n(\C)$ do not exist. We have to replace them in the holomorphic category. This construction is done in Section \ref{section:pairs}. If the rank $2$ vector bundle $E$ does not admit a trivial subbundle, then holomorphic unipotent automorphisms of $E$ have to "degenerate" to the identity $\operatorname{Id}_{E_x}$ for $x$ contained in a (codimension $1$) analytic subset. We construct the holomorphic unipotent automorphisms in pairs of "upper/lower" triangular automorphisms $(U_i^+, U_i^-) \in \U(E) \times \U(E)$, where $U_i^+ = \operatorname{Id} + N_i^+$, $U_i^- = \operatorname{Id} + N_i^-$ have  the property that $N_i^+, N_i^- \in \End (E)$ are divisible by a holomorphic function $f_i \in \O(X)$, $i= 1, 2, \ldots, n+1$ such that the $f_i$'s have no common zeros. On the set $X \setminus \{f_i = 0\}$ the bundle $E$ is trivial and the pair $(N_i^+, N_i^-)$ is conjugated to the "standard" pair 
\[\left( \begin{array}{cc}
    0 & 1 \\
    0 & 0
\end{array} \right),\
\left( \begin{array}{cc}
    0 & 0 \\
    1 & 0
\end{array}\right).
\]
We then proceed in two steps.

In the first step in Section \ref{section:suitablepairs}, we factorize $F$ as a product of null-homotopic special automorphisms $G_i$,
\[F = G_1 \cdot G_2 \cdot \ldots \cdot G_{n+1},
\]
such that $G_i - \operatorname{Id}$ is divisible by $f_i^4$, and the null-homotopy satisfies the condition $(G_i)_t (x) = \operatorname{Id}$ when $f(x)=0$. Such an automorphism $G_i$ is called \textit{suitable for the pair} $(U_i^+, U_i^-)$. In this step we use the Oka principle for Oka Pairs due to Forster and Ramspott. However, we choose to present our proofs using a homotopy principle due to Studer. The technical details needed as input in Studer's Theorem, are quoted from the paper of Forster and Ramspott \cite{Okasche Paare}. We hope this helps readers who are not specialists in Oka Pairs to understand the material better and give them a much more general view on Oka principles.

The solution to the corresponding topological problem is done with bare hands in Subsection \ref{ssection:topologicalsuitablepairs}.

In the second step in Section \ref{section:factorizationofsuitableautomorphisms}, we factorize each of the $G_i$'s suitable for the pair $(U_i^+, U_i^-)$ into an intertwining product of holomorphic "replica" of this pair, i.e., there exist $h_i \in \O(X)$ such that
\begin{equation}\label{topsol}\tag{$\square$}
G_i(x) = U_i^- (h_1(x)) \cdot U_i^+ (h_2(x)) \cdot \ldots.
\end{equation}
Here a \textit{replica} $U(f)$ of a unipotent automorphism $U = \operatorname{Id} + N \in \Aut(E)$ (by a function $f$) is simply defined as $U(f(x)) = \operatorname{Id} + f \cdot N$.

This follows the strategy from \cite{IK2} and it is limited to rank $2$ vector bundles. It is the only place in the paper where we heavily use the  rank $2$ condition on the vector bundle. All other results are proven (or can be proven without a big extra effort) for vector bundles of any rank $n \ge 2$. The topological solution for the factorization (\ref{topsol}) is given in Subsection \ref{ssection:topologicalULfactorization} and relies on a special factorization result for continuous complex valued matrices "near" to the identity matrix, namely Lemma \ref{lem:novelties}. In particular, we
give a new form of the Whitehead Lemma for rings of complex valued continuous functions, Lemma \ref{lem:novelties} (3). 

The holomorphic result is then deduced from the topological one  using the Oka principle for stratified elliptic submersions in Subsection \ref{ssection: holomorphicULfactorization}. 

 Finally, let us mention an immediate corollary (observed by Doubtsov and the second author in \cite{DK}) of our main result to the so called  product-of-exponentials decomposition introduced by Mortini and Rupp in \cite{Mort}. For that denote $\exp : \operatorname{End}^0(E) \to \SAut(E)$
the exponential map from trace zero endomorphisms (the Lie algebra bundle) to  the bundle of special automorphisms. Realizing that for a nilpotent automorphism $N$, the Taylor formula of $\log(U) \log (\operatorname{Id}+N)$ is just a polynomial in $N$, thus converges globally on $X$, we have proven the following.

\begin{Cor} Let $X$ be a Stein space and $E \to X$ a rank $2$ holomorphic vector bundle over $X$. Then $F \in \SAut (E)$ is a (finite) product of  holomorphic exponentials, i.e., there are  $a_i \in \Gamma (\End^0(E), X) $, $i= 1,2, \ldots, K$,
\[F(x) = \exp (a_1) (x) \cdot \exp (a_2) (x) \cdot \ldots \cdot \exp (a_K) (x)
\]
if and only if $F$ is null-homotopic.
\end{Cor}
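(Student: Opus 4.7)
The plan is to deduce the corollary directly from Theorem \ref{Thm:main}, using only the elementary observation that the Taylor series of $\exp$ applied to a fibrewise nilpotent endomorphism of a rank $2$ bundle terminates immediately.

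For the easy direction, suppose $F$ is a finite product of exponentials, $F(x) = \exp(a_1)(x) \cdot \exp(a_2)(x) \cdots \exp(a_K)(x)$ with $a_i \in \Gamma(\End^0(E), X)$. Then the path
\[
F_t(x) = \exp(t a_1)(x) \cdot \exp(t a_2)(x) \cdots \exp(t a_K)(x), \quad t \in [0,1],
\]
is a continuous deformation, equals the identity at $t = 0$ and equals $F$ at $t = 1$, and stays inside $\SAut(E)$ since each factor does (because $\exp: \End^0(E) \to \SAut(E)$). Hence $F$ is null-homotopic.

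For the nontrivial direction, assume $F$ is null-homotopic. Theorem \ref{Thm:main} gives a decomposition $F = u_1 \cdot u_2 \cdots u_K$ with $u_i = \operatorname{Id} + N_i \in \U(E)$ unipotent, so that each $N_i \in \End(E)$ is fibrewise nilpotent. Because $E$ has rank $2$, every nilpotent endomorphism of a fibre satisfies $N_i(x)^2 = 0$; because nilpotent endomorphisms are traceless, $N_i$ is in fact a global section of the Lie algebra subbundle $\End^0(E)$. The fibrewise Taylor series for $\exp$ therefore terminates after the linear term and yields
\[
\exp(N_i) = \operatorname{Id} + N_i = u_i.
\]
Each unipotent factor is thus itself a holomorphic exponential of a trace-zero endomorphism, and concatenating these expressions writes $F$ as a product of exponentials.

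Since the nontrivial direction reduces to the Main Theorem in essentially one line, there is no real obstacle beyond noting that the formal identity $\exp(\log u) = u$ is realised globally on $X$ by the polynomial expression $\log(\operatorname{Id} + N_i) = N_i - \tfrac{1}{2}N_i^2 + \cdots$, which collapses to $N_i$ in rank $2$ and, more generally, to $\sum_{k=1}^{n-1}(-1)^{k+1}N_i^k/k$ in rank $n$. This shows that the rank-$2$ hypothesis enters only through the Main Theorem, not through the passage from unipotents to exponentials.
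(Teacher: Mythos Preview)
Your argument is correct and follows essentially the same route as the paper: the paper observes just before the Corollary that for a unipotent $U=\operatorname{Id}+N$ the Taylor series of $\log(\operatorname{Id}+N)$ is a polynomial in $N$ and hence globally defined, so each unipotent factor from Theorem~\ref{Thm:main} is an exponential. You add the explicit check that $N_i$ is traceless and spell out the easy direction via $t\mapsto \exp(t a_i)$, but the substance is identical.
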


\section{Preliminaries}
\label{section:preliminaries}

The main technical tool we use are two instances of the Oka principle. One  
of them, the Oka principle for sections of stratified elliptic submersions is well known and prominent in the modern literature. To present it we need to introduce the concept of a spray associated with a holomorphic submersion following \cite{Gromov:1989} and \cite{Forstneric:2002}. We start with some notation and terminology. Let $h\colon Z \to X$ be a holomorphic submersion of a complex manifold $Z$ onto a complex manifold $X$. For any $x\in X$, the fibre over $x$ of this submersion will be denoted by $Z_x$. At each point $z\in Z$, the tangent space $T_zZ$ contains {\it the vertical tangent space} $VT_zZ=\ker Dh$. For holomorphic vector bundles $p\colon E \to Z$, we denote the zero element in the fibre $E_z$ by $0_z$.

\begin{Def}
\label{definspray}

Let $h\colon Z \to X$ be a holomorphic submersion of a complex manifold $Z$ onto a complex manifold $X$. A \textit{spray} on $Z$ associated with $h$ is a triple $(E,p,s)$, where $p\colon E\to Z$ is a holomorphic vector bundle and $s\colon E\to Z$ is a holomorphic map such that for each $z\in Z$ we have 
\begin{itemize}
\item[(i)]{$s(E_z)\subset Z_{h(z)}$,}
\item[(ii)]{$s(0_z)=z$, and}
\item[(iii)]{the derivative $Ds(0_z)\colon T_{0_z}E\to T_zZ$ maps the subspace $E_z\subset T_{0_z}E$ surjectively onto the vertical tangent space $VT_zZ$.}
\end{itemize} 

\end{Def} 

\begin{Rem}
\label{fibredominating}

We will also say that the submersion \textit{admits a spray}. A spray associated with a holomorphic submersion is sometimes called a \textit{(fibre) dominating spray}.

\end{Rem}

One way of constructing dominating sprays, as pointed out by Gromov, is to find finitely many $\mathbb{C}$-complete vector fields that are tangent to the fibres and span the tangent space of the fibres at all points in $Z$. One can then use the flows $\varphi_j^t$ of these vector fields $V_j$ to define $s \colon Z \times \mathbb{C}^N \to Z$ via $s(z, t_1, \ldots, t_N) = \varphi_1^{t_1} \circ \cdots \circ \varphi_N^{t_N}(z)$ which gives a spray. 

\begin{Def}

Let $X$ and $Z$ be complex spaces. A holomorphic map $h \colon Z \to X$ is said to be a \textit{submersion} if for each point $z_0\in Z$ it is locally equivalent via a fibre preserving biholomorphic map to a projection $p\colon U\times V \to U$, where $U\subset X$ is an open set containing $h(z_0)$ and $V$ is an open set in some $\mathbb{C}^d$.  

\end{Def}

We will need to use stratified sprays which are defined as follows.

\begin{Def} 
\label{definstratifiedspray}

We say that a submersion $h \colon Z \to X$ \textit{admits stratified sprays} if there is a descending chain of closed complex subspaces $X = X_m\supset \cdots \supset X_0$ such that each stratum $Y_k = X_k \setminus X_{k-1}$ is regular and the restricted submersion $h \colon Z|_{Y_k} \to Y_k$ admits a spray over a small neighborhood of any point $x\in Y_k$.  

\end{Def}

\begin{Rem}

We say that the stratification $X = X_m\supset \cdots \supset X_0$ is \textit{associated with the stratified spray}.

\end{Rem}

In \cite{Forstneric:2001}, see also \cite[Theorem 8.3]{Forstneric:2010}, the following theorem is proved. 
\begin{Thm}
\label{t:forstneric}

Let $X$ be a Stein space with a descending chain of closed complex subspaces $X = X_m\supset \cdots \supset X_0$ such that each stratum $Y_k = X_k \setminus X_{k-1}$ is regular. Assume that $h \colon Z \to X$ is a holomorphic submersion which admits stratified sprays then any continuous section $f_0\colon X\to Z$ such that $f_0|_{X_0}$ is holomorphic can be deformed to a holomorphic section $f_1\colon X \to Z$ by a homotopy that is fixed on $X_0$. 

\end{Thm}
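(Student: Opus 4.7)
The plan is to prove Theorem \ref{t:forstneric} by induction on the filtration $X_0 \subset X_1 \subset \cdots \subset X_m = X$. At stage $k$, I assume a continuous section $f^{(k)} \colon X \to Z$ homotopic to $f_0$ relative to $X_0$ has been produced which is holomorphic in an open neighborhood of $X_{k-1}$, and I construct $f^{(k+1)}$ homotopic to $f^{(k)}$ (with the homotopy fixed near $X_{k-1}$) and holomorphic near $X_k$. The base case $k = 0$ is the hypothesis, and after $m$ steps I arrive at a holomorphic section on all of $X$. Thus the whole problem reduces to the following single-stratum task: on the regular stratum $Y_k = X_k \setminus X_{k-1}$ the restricted submersion admits local sprays, and I must deform a continuous section which is already holomorphic near $X_{k-1}$ to one which is holomorphic on a neighborhood of $X_k$.

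To handle one stratum, cover $X_k$ by a countable locally finite family of Stein compacts $\{K_j\}_{j \ge 1}$, each contained in an open set $U_j$ over which either $h$ admits a spray $(E_j, p_j, s_j)$ or the section is already holomorphic. Form a compact exhaustion $L_0 \subset L_1 \subset \cdots \subset X_k$ by putting $L_0$ a Stein compact neighborhood of $X_{k-1}$ in $X_k$ and $L_{n+1} = L_n \cup K_{n+1}$, arranging that every $(L_n, K_{n+1})$ is a Cartan pair. Inductively, given a section $f^{(n)}$ holomorphic near $L_n$, produce $f^{(n+1)}$ holomorphic near $L_{n+1}$ in three substeps: (i) use the spray $s_{n+1}$ together with Oka-Weil approximation for sections of the holomorphic vector bundle $E_{n+1}$ (via Cartan's Theorem A) to find a section $g$ holomorphic near $K_{n+1}$ which is $C^0$-close to $f^{(n)}$ on the overlap $L_n \cap K_{n+1}$; (ii) write $g = s_{n+1} \circ \xi$ for a small vector bundle section $\xi$ of $E_{n+1}$ defined near $L_n \cap K_{n+1}$, and apply the Cartan splitting lemma (solvability of $\bar{\partial}$ on a Cartan pair, i.e.\ Cartan's Theorem B) to decompose $\xi = \xi^+ - \xi^-$ with $\xi^+$ holomorphic near $L_n$ and $\xi^-$ holomorphic near $K_{n+1}$; (iii) define $f^{(n+1)}$ to equal $s_{n+1}(\xi^+) \circ f^{(n)}$ on $L_n$ and $s_{n+1}(\xi^-) \circ g$ on $K_{n+1}$, using the spray action to verify compatibility on the overlap.

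The main obstacle is the nonlinear gluing step (ii)--(iii): the spray gives only a local nonlinear parameterization of the space of sections by sections of a vector bundle, so the additive splitting of $\xi$ must be converted into a multiplicative splitting of actual sections of $h$, with enough norm control that an infinite iteration converges. The standard remedy is to shrink the allowed perturbation at stage $n$ by a geometric factor $2^{-n}$ in some seminorm subordinate to the Stein exhaustion; then the telescoping composition of small spray modifications converges in the compact-open topology to a continuous section $f^{(k+1)}$ which is holomorphic on $X_k$, together with a homotopy from $f^{(k)}$ fixed near $X_{k-1}$. Iterating over $k$ gives the theorem. The stratified hypothesis enters this argument only through the observation that during the bumping above $X_{k-1}$, sprays are needed only over compacts meeting $Y_k$, because near $X_{k-1}$ the current section is already holomorphic and no deformation is required; this is exactly what allows the induction to close through the singular locus.
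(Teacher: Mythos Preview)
The paper does not prove Theorem \ref{t:forstneric}; it merely quotes it from \cite{Forstneric:2001} (see also \cite[Theorem~8.3]{Forstneric:2010}) and uses it as a black box in Subsection~\ref{ssection: holomorphicULfactorization}. So there is no ``paper's own proof'' to compare against --- what you have written is an attempt to reconstruct the Forstneri\v{c}--Prezelj argument itself.

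As a sketch of that argument, your outline has the right architecture (induction on strata, Cartan-pair exhaustion, local spray parameterization, approximation, splitting, convergence), and you correctly identify the nonlinear gluing as the crux. But the way you describe step~(iii) does not actually work as stated: the expression ``$s_{n+1}(\xi^+)\circ f^{(n)}$'' does not type-check, since the spray $s_{n+1}:E_{n+1}\to Z$ has domain a bundle over $Z$, not over $X$, and sprays do not compose associatively like group actions. The linear decomposition $\xi=\xi^+-\xi^-$ from Theorem~B does not by itself yield matching holomorphic sections on the two sides of the Cartan pair; one needs a genuine \emph{nonlinear} splitting lemma (this is exactly Forstneri\v{c}'s contribution, and the delicate part of the cited papers) which takes two nearby holomorphic sections on the overlap and factors their ``difference'' through the spray into pieces holomorphic on each side, with controlled norms. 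Your ``standard remedy'' paragraph gestures at the iteration and convergence but skips the construction of that splitting, which is where essentially all of the analytic work lives. If you want to include a proof rather than a citation, that lemma must be stated and proved (or cited precisely); without it the argument has a real gap.
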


 The second Oka principle that we (mainly) use is the Oka principle for Oka pairs due to Forster and Ramspott. However, it is not  prominent in the modern literature on Oka theory and the original paper \cite{Okasche Paare} is in German. We therefore choose to deduct it from a general unifying approach to proofs for homotopy (and thus Oka) principles presented by Studer \cite{Studer2}.
 
To formulate his result we need to recall some basic notions of complex geometry. Let $X$ be a reduced complex space. A compact $C \subset X$ is called a \textit{Stein compact} if it admits a basis of Stein neighborhoods in $X$. If $C \subset B$ for some $B \subset X$, then the compact $C$ is called $\O (B)$-\textit{convex} if $ C = \{p \in B : \vert f (p) \vert \le \displaystyle\max_{ x\in C} \vert f(x) \vert, f \in \O(B)\}$. Here $\O(B)$ denotes the set of all holomorphic functions defined on unspecified neighborhoods of $B$.

\begin{Def}

Let $X$ be a complex space and let $A, B \subset  X$. The ordered pair $(A, B)$ is a $C$-\textit{pair} if

\begin{enumerate}
    \item  $A, B, A \cap B$ and $A \cup B$ are Stein compacts;
    \item  $A \cap B$ is $\O(B$)-convex;
    \item $A \setminus B \cap B \setminus A = \emptyset$.
\end{enumerate}
 
\end{Def}

The notion of a sheaf of topological spaces used by Studer is the following.

\begin{Def}

Let $X$ be a topological space. A \textit{sheaf of topological spaces} on $X$ is a sheaf $U \mapsto \Phi(U)$, $U \subset X$ open, whose sets of local sections are topological spaces, whose restrictions are continuous, and which is well-behaved in the sense that for the closed unit ball $\B \subset \R^n$ of any real dimension $n \ge 1$ the presheaf $U \mapsto \Phi^\B(U)$ is in fact a sheaf. Here $\Phi^\B(U)$ denotes the set of continuous maps $\B \mapsto \Phi(U)$.

\end{Def}

A sheaf of topological spaces is said to be \textit{metric} if every set of local sections is a metric space, and a metric sheaf is said to be \textit{complete} if every set of local sections is a complete metric space. We say that $\Phi \hookrightarrow \Psi$ is an \textit{inclusion} of sheaves of topological spaces if $\Phi$ is a subsheaf of the sheaf of topological spaces $\Psi$ whose sets of local sections are endowed with the subspace topology of the sets of local sections of $\Psi$.

The following properties for sheaves of topological spaces are important for Oka principles. In the following $s$ and $t$ denote real numbers in the unit interval and by using them as subscripts, we always indicate a homotopy in the common sense. We use the convention that a singleton (with empty boundary) is the closed unit ball of dimension $n = 0$. For a subset $A$ of $X$ we denote by $\mathring A$ its interior. A restriction $\Phi(U) \to \Phi(V)$ of a sheaf $\Phi$ will be denoted by $r_V$ if there is no ambiguity regarding the domain.

\begin{Def}

An inclusion $\Phi \hookrightarrow \Psi$ of sheaves of topological spaces over a topological space $X$ is a \textit{local weak homotopy equivalence} if for every point $p \in X$, every open neighborhood $U$ of $p$ and every continuous map $f$ from the closed unit ball $\B \subset  \R^n$ of any real dimension $n \ge 0$ to $ \Psi(U)$ with $f\vert_{\partial \B} \subset \Phi (U)$, there is a neighborhood $p \in  V \subset U$ and a homotopy $f_t : \B \to  \Phi(V)$ such that $f_0 = r_V   \circ  f $, $f_t \vert_{\partial \B}$ is independent of $t$ and $f_1$ has values in $\Phi(V)$.

\end{Def}

\begin{Def}
\label{Def:weaklyflexible} 

Let $\Phi$ be a sheaf of topological spaces on $X$ and let $A, B \subset X$ be compact sets. The ordered pair $(A, B)$ is called \textit{weakly flexible} for $\Phi$ if the following holds. Given open neighborhoods $U$, $V$ and $W$ of $A$, $B$ and $A \cap B$, respectively and a triple of maps $a : \B \to \Phi (U)$, $b : \B \to \Phi (V)$ and $c_s : \B \to \Phi (W)$ such that $c_0 = r_W \circ a$, $c_1 = r_W \circ b$ and $c_s\vert_{\partial \B}$ is independent of $s$, there are smaller neighborhoods $A \subset U^\prime \subset U$, $B \subset V^\prime \subset V$ and $A \cap B \subset  W^\prime \subset W$, respectively and homotopies $a_t : \B \to \Phi (U^\prime)$, $b_t : \B \to \Phi(V^\prime)$ and $c_{s,t} : \B \to  \Phi(W^\prime)$ with $a_0 = r_{U^\prime} \circ a$, $b_0 = r_{V^\prime} \circ b$ and $c_{s,0} = r_{W^\prime} \circ c_s$, respectively such that

\begin{enumerate}
\item $c_{0,t} = r_{W^\prime} \circ  a_t$ and $c_{1,t} = r_{W^\prime} \circ b_t$,
\item the restrictions $a_t\vert_{\partial B}$, $b_t\vert_{\partial B}$ and $c_{s,t}\vert_{\partial B}$ are independent of $t$,
\item $c_{s,1}$ is independent of $s$,
\item $r_{\mathring A} \circ  a_t$ is in a prescribed neighborhood of $r_{\mathring A} \circ  a_0 : \B \to \Phi (\mathring A)$ with respect
to the compact open topology for all $t$.
\end{enumerate}

If $(A, B)$ is a weakly flexible pair for $\Phi$ such that the homotopy $a_t$ from the conclusion
of the definition can be chosen to satisfy $r_{\mathring A} \circ a_t =
r_{\mathring A} \circ a_0$ for all $t$, then $(A, B)$ is
called an \textit{ordered flexible pair}.

\end{Def}

Here comes the general homotopy theorem due to Studer.

\begin{Thm}
\label{Thm:Studer}

Let $X$ be a second countable reduced Stein space and let $\Phi \hookrightarrow \Psi$ be a local weak homotopy equivalence of sheaves of topological spaces on $X$. Assume that one of the two following statements holds:
\begin{enumerate}
\item 
$\Phi$ is complete metric and every point $p \in X$ has a neighborhood $U$ such that every $C$-pair $(A, B)$ with $B \subset U$ is weakly flexible for $\Phi$.
\item Every point $p \in X$ has a neighborhood $U$ such that every $C$-pair $(A, B)$ with $B \subset U$ is ordered flexible for $\Phi$.
\end{enumerate}
Then $\Psi(X) \ne \emptyset$ if and only if $\Phi(X) \ne \emptyset$. Moreover, if $\Psi$ is likewise in either of the two classes of sheaves, then $\Phi (X) \hookrightarrow \Psi(X)$ is a weak homotopy equivalence.
\end{Thm}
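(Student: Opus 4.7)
The strategy is the standard inductive Oka-principle scheme built on the hypotheses. First I would exhaust the second countable reduced Stein space $X$ by a nested sequence $K_0 \subset K_1 \subset \ldots$ of Stein compacts such that each enlargement $K_{n+1} = K_n \cup B_n$ is a $C$-pair $(K_n, B_n)$ with $B_n$ contained in a neighborhood where the (weakly or ordered) flexibility hypothesis applies; such an exhaustion exists by standard plurisubharmonic-function theory on second countable reduced Stein spaces, cutting along regular sublevel sets of a strictly plurisubharmonic exhaustion function. Then, starting from any continuous $\Psi$-section $f \in \Psi(X)$ (the nontrivial direction being $\Psi(X)\ne\emptyset \Rightarrow \Phi(X)\ne\emptyset$, the other being tautological since $\Phi \hookrightarrow \Psi$), I would inductively produce $\Psi$-sections $f_n$ whose restriction to a neighborhood of $K_n$ lies in $\Phi$, with $f_{n+1}$ modifying $f_n$ only near $K_n \cap B_n$ and on $B_n$.

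The inductive bumping step is the heart of the argument. Over the small piece $B_n$, the local weak homotopy equivalence furnishes a homotopy $f_{n,s}\colon B_n \to \Psi$ from $f_n|_{B_n}$ to a $\Phi$-section $g$, with boundary behaviour controlled by shrinking the open neighborhood. Restricted to the overlap $K_n \cap B_n$, one has two $\Phi$-sections $f_n$ and $g$ joined by a $\Psi$-homotopy --- exactly the input triple $(a,b,c_s)$ of Definition \ref{Def:weaklyflexible}. The weakly flexible (respectively ordered flexible) property of the $C$-pair $(K_n, B_n)$ then produces deformations $a_t, b_t$ through $\Phi$-sections that agree on a smaller overlap at $t=1$. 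Patching these yields a $\Phi$-section on a neighborhood of $K_{n+1}$, and extending it back to a continuous $\Psi$-section on all of $X$ via the given $\Psi$-homotopies off $K_{n+1}$ produces $f_{n+1}$.

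The passage to the limit $f_\infty = \lim f_n$ is case-dependent. In the ordered flexible case, clause (4) strengthens to $r_{\mathring{A}} \circ a_t = r_{\mathring{A}} \circ a_0$, so at every later step the section is untouched over a neighborhood of $K_n$, and $f_\infty$ is defined without any convergence argument. In the complete metric case, clause (4) lets me make the $n$-th modification arbitrarily small in the compact-open topology on $\mathring{K}_n$; choosing summable tolerances and invoking completeness of $\Phi$ produces a continuous limit section in $\Phi(X)$. Finally, the weak homotopy equivalence statement in the "moreover" clause is obtained by running the identical scheme on the ball-parametrized sheaves $\Phi^\B \hookrightarrow \Psi^\B$, which are genuine sheaves of topological spaces by the well-behavedness hypothesis and inherit the required structure (local weak homotopy equivalence, flexibility, completeness) from $\Phi \hookrightarrow \Psi$. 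The main obstacle is the bookkeeping for the bumping step: arranging neighborhood shrinkings, cut-off patchings, ball parameters, and compact-open tolerances coherently along the exhaustion so that the induction both closes at each step and converges in the limit --- this is the technical substance of Studer's argument.
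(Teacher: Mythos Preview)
The paper does not prove this theorem at all: it is quoted verbatim as a result of Studer \cite{Studer2} and is used as a black box. So there is no proof in the paper to compare your proposal against.

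That said, your sketch is a reasonable outline of the actual argument in \cite{Studer2}: exhaustion by $C$-pairs, the local weak homotopy equivalence to push a $\Psi$-section into $\Phi$ over the new bump, the flexibility hypothesis to glue over the overlap, and then either stabilization (ordered flexible case) or a Cauchy-type convergence using completeness (weakly flexible case). One point to be careful about if you were to flesh this out: the exhaustion is not quite by a single sequence of $C$-pairs $(K_n,B_n)$ with $B_n$ small, since a general sublevel set jump may require several bumps; Studer arranges a more refined ``zigzag'' exhaustion so that each elementary step is a $C$-pair with $B$ inside one of the good neighborhoods. Also, the ``moreover'' clause is not just the same argument run on $\Phi^{\mathbb B}\hookrightarrow\Psi^{\mathbb B}$---one must check that the flexibility and local weak homotopy equivalence hypotheses survive passage to the parametrized sheaves, which is where the $\partial\mathbb B$ conditions in the definitions are used. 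These are bookkeeping issues rather than conceptual gaps, consistent with your final paragraph.
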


\begin{Rem} 

Since analytic continuation is unique, assumption $(2)$ from Theorem \ref{Thm:Studer} is too strong to be satisfied by a sheaf of analytic maps. Thus it is more suitable for sheaves of continuous maps, where cutoff functions can be used. Assumption $(1)$ is the property that one tries to show if $\Phi$ is a complex analytic sheaf. 

\end{Rem} 

\section{Unipotent Holomorphic Automorphisms. Construction of the Pairs}
\label{section:pairs}

Let $\pi : E \rightarrow X$ be a holomorphic vector bundle of rank $2$, where $X$ is a Stein space of dimension $n$. In this section we will construct global unipotent  holomorphic automorphisms (equivalently nilpotent endomorphism) of $E$. 

We start with some preliminary thoughts. Let $N \in \End_{\text{hol}}(E)$ be a global nilpotent endomorphism. Any nilpotent endomorphism of a $2$-dimensional complex vector space is either the zero map or it is conjugated to 
\[\left( \begin{array}{cc}
    0 & 0 \\
    1 & 0
\end{array}\right).
\]
The corresponding basis of the vector space is called a \textit{Jordan basis}. If there was a global holomorphic Jordan basis for $N$ (i.e., two holomorphic sections $s_1$, $s_2$ so that $s_1(x), s_2(x)$ is a Jordan basis with respect $N_x : E_x \to E_x$ such that the second case occurs in each point $x \in X$), then the vector bundle $E$ would contain a trivial line subbundle defined by $\operatorname{Ker}(N)$. Since not all holomorphic rank $2$ vector bundles over a Stein space contain a trivial holomorphic line bundle, we cannot expect this situation to hold in general. Therefore, in general, any nilpotent endomorphism $N \in \End_{\text{hol}}(E)$ has to "degenerate" to the zero map on some analytic subset of $X$.

In order to construct $N$, we start with two sections $s_1, s_2 \in \Gamma_{\text{hol}}(E, X)$ which are linear independent outside a proper analytic subset of $X$. Since sections of a coherent sheaf can be extended from analytic subsets we can choose any discrete subset $T$ of $X$, in which $s_1(x), s_2(x)$ form a basis of $E_x$. 

Let $E$ have a trivialization $X = \cup_{i=1}^\infty U_i$ with a cocycle of transition functions $f_{i,j} : U_{i,j} \to \operatorname{GL}_2(\C)$. Let $s_1^i, s_2^i : U_i \to \C^2$ be the local representations of the two sections. Let $A$ be the set of points in $X$ such that $s_1(x)$ and $s_2(x)$ are linearly dependent. On $U_i$ this set is described by the vanishing of a determinant. So, $A\cap U_i = \{x \in U_i : \det(s_1^i (x), s_2^i (x)) = 0$\} (showing that $A$ is analytic of codimension $1$ in $X$). 
Since $f_{i,j} s_1^i(x)=  s_1^j(x)$ and $f_{i,j} s_2^i(x)=  s_2^j(x)$,
the local determinants transform by the cocycle 
\[\alpha_{i,j} := \det(f_{i, j}) = \det(s_1^j(x), s_2^j(x)) (\det(s_1^i(x), s_2^i(x)))^{-1} : U_{i,j} \to \C^\ast.
\]
The holomorphic line bundle $L$ defined by this cocycle may not be trivial
and thus there may not be a holomorphic function $f \in \mathcal{O}(X)$ which solves the Cousin problem corresponding to the Cousin distribution $\det (s_1^i(x), s_2^i(x))$. However, any global holomorphic section in $L^{-1}$ gives a holomorphic function $f \in \mathcal{O}(X)$ with the property that on $U_i$, the quotient $f(x) (\det (s_1^i(x), s_2^i(x)))^{-1}$ is holomorphic. Indeed, if on $U_i$, the section is given by the holomorphic map $f_i : U_i \to \C$, then on $U_{i, j}$, we have $f_i (x) \det (s_1^i(x), s_2^i(x)) (\det (s_1^j(x), s_2^j(x)))^{-1} = f_j(x)$, thus if on $U_i$, we define $f(x) = f_i (x) \det(s_1^i(x), s_2^i(x))$, then the function $f$ is globally well defined and, as desired on $U_i$, we get that $f(x) \det(s_1^i(x), s_2^i(x))^{-1} = f_i(x)$ is holomorphic.

Again by the fact that sections of a coherent sheaf can be extended from analytic subsets, we may choose the section of the line bundle such that the function $f$ does not vanish on the (same as above) discrete subset $T$ of $X$.

Now we can define a global nilpotent holomorphic endomorphism $N^-$ by
\[N^-(s_1(x)) = f(x)  s_2(x)\, \text{and}\, N^-(s_2(x)) = 0
\]
in points $x$ where $s_1(x), s_2(x)$ form a basis of $E_x$. In order to
see that this extends holomorphically to all of $X$, let us calculate it
in $U_i$ in points where $s_1(x), s_2(x)$ form a basis of $E_x$, i.e., where the vectors $s_1^i(x), s_2^i(x)$ form a basis of $\C^2$.
If we denote 
\[s_1^i(x) = \left(\begin{array}{c}
a(x) \\ c(x) \end{array}\right)\, \text{and}\, s_2^i(x) = \left(\begin{array}{c} b(x) \\ d(x) \end{array}\right),
\]
then in the standard basis $e_1, e_2 \in \C^2$ the matrix of $N^-$ is given by
\[\left(\begin{array}{cc}
    a(x) & b(x) \\
    c(x) & d(x) 
\end{array}\right)
\cdot 
\left(\begin{array}{cc}
      0 & 0 \\
    f(x) & 0 
\end{array}\right)
\cdot
\left(\begin{array}{cc}
    a(x) & b(x) \\
    c(x) & d(x) 
\end{array}\right)^{-1} = 
\]
\[= \frac{f(x)}{\det(s_1^i(x), s_2^i(x))} \left( \begin{array}{cc}
    a(x) & b(x) \\
    c(x) & d(x) 
\end{array}\right)
\cdot 
\left(\begin{array}{cc}
      0 & 0 \\
     1 & 0 
\end{array}\right)
\cdot
\left(\begin{array}{cc}
    d(x) & -b(x) \\
    -c(x) & a(x) 
\end{array}\right).
\]

By our choice of $f$ we see that this extends holomorphically to all of $U_i$, thus $N^-$ is globally well defined. In points $x$ where $s_1(x), s_2(x)$ form a basis and $f(x)$ vanishes, $N^-$ is the zero map, and in points $x$ where $s_1(x), s_2(x)$ are linearly dependent, it may be zero or have rank $1$. In points where $f$ does not vanish, it has rank $1$.

Defining $N^+$ by
\[N^+(s_1(x)) =  0\, \text{and}\, N^+(s_2(x)) = f(x) s_1(x),
\]
we have proved the following lemma.

\begin{Lem}
\label{Lemma:pair}

Let $\pi : E \rightarrow X$ be a holomorphic vector bundle of rank $2$ over a Stein space $X$. Given a discrete subset $T$ of $X$, there is a pair of nilpotent holomorphic automorphisms $N^+, N^-$ in $\operatorname{End}(E)$ and a (non-zero) holomorphic function $f \in \mathcal{O}(X)$, nowhere vanishing on $T$, with the property that on $X \setminus \{f = 0\} \supset T$ the bundle $E$ is trivial and locally the pair $(N^+, N^-)$ is holomorphically conjugate to the "standard" pair
\[\left(\begin{array}{cc}
    0 & 1 \\
    0 & 0
\end{array} \right),
\left( \begin{array}{cc}
    0 & 0 \\
    1 & 0
\end{array}\right).
\]
\end{Lem}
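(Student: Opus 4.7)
The plan is to carry out the construction already sketched in the paragraphs preceding the lemma and to record it as a single statement. Three pieces of data must be produced on the Stein space $X$: (i) two holomorphic sections $s_1, s_2 \in \Gamma_{\text{hol}}(E, X)$ that form a basis of $E_x$ at every $x \in T$ and are generically linearly independent; (ii) a holomorphic function $f \in \O(X)$, nowhere vanishing on $T$, such that in each trivializing chart $U_i$ of $E$ the quotient $f/\det(s_1^i, s_2^i)$ is holomorphic; and (iii) the endomorphisms $N^\pm$, defined on the generic locus by $N^-(s_1) = f\cdot s_2$, $N^-(s_2) = 0$ and $N^+(s_1) = 0$, $N^+(s_2) = f\cdot s_1$, and extended holomorphically across the dependency locus.

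For (i), I would invoke Cartan's Theorems A and B applied to the coherent sheaf of germs of holomorphic sections of $E$. By Theorem A the stalks are generated by global sections, so an arbitrary basis of $E_x$ can be prescribed at each point $x \in T$ and extended to global sections $s_1, s_2 \in \Gamma_{\text{hol}}(E,X)$; generic linear independence is then automatic because the analytic set $A = \{x : s_1(x), s_2(x) \text{ linearly dependent}\}$ is cut out in $U_i$ by the single equation $\det(s_1^i, s_2^i) = 0$, which is not identically zero since by construction $T \cap A = \emptyset$. For (ii), I would form the holomorphic line bundle $L$ with transition cocycle $\alpha_{i,j} := \det(f_{i,j})$ and apply the same theorems to $\O(L^{-1})$, prescribing nonzero values at each $x \in T$ and extending to a global section. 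Such a section is locally a tuple $(f_i)$ with $f_j = f_i \alpha_{i,j}$, and the prescription $f|_{U_i} := f_i \cdot \det(s_1^i, s_2^i)$ glues into a global $f \in \O(X)$ by the cocycle identity, with the required nonvanishing on $T$.

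With (i) and (ii) in hand, (iii) is routine, and the local matrix calculation already performed in the excerpt does the verification for free: in each trivializing chart $U_i$, the linear map $N^-$ equals $\frac{f(x)}{\det(s_1^i, s_2^i)}$ times a matrix whose entries are holomorphic polynomials in the components $a, b, c, d$ of $s_1^i$ and $s_2^i$; the scalar factor is holomorphic on $U_i$ by the defining property of $f$, so $N^-$ extends holomorphically across $A \cap U_i$, and the same argument handles $N^+$. On $X \setminus \{f = 0\} \subset X \setminus A$ the sections $(s_1, s_2)$ form a holomorphic frame that trivializes $E$, and in that frame $(N^+, N^-)$ is exactly the standard pair scaled by the nowhere zero factor $f$; after absorbing $f$ into the frame this is the asserted local holomorphic conjugation to the standard pair.

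The main conceptual obstacle is not analytic but bookkeeping: at each of the two steps above one must apply Cartan's extension theorem to prescribe sections of a coherent sheaf on a discrete (hence analytic) subset of $X$ with prescribed nonzero values in the fibres. This reduces, via the short exact sequence $0 \to \mathcal{I}_T \cdot \mathcal{F} \to \mathcal{F} \to \mathcal{F}|_T \to 0$ with $\mathcal{F} = \O(E)$ and $\mathcal{F} = \O(L^{-1})$, to the vanishing of $H^1(X, \mathcal{I}_T \cdot \mathcal{F})$ granted by Theorem B on the Stein space $X$, combined with Theorem A to realize the prescribed stalk values. Once this is in place, the rest of the argument is an exercise in patching via the cocycle $\alpha_{i,j}$.
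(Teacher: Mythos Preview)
Your proposal is correct and is essentially a clean write-up of the paper's own argument, which is the construction carried out in the paragraphs immediately preceding the lemma: choose global sections $s_1,s_2$ that are a basis along $T$ (extension of sections of a coherent sheaf from the analytic set $T$, i.e.\ Cartan~B), build $f$ from a global section of $L^{-1}$ with prescribed nonzero values on $T$, and verify via the explicit local matrix computation that $N^\pm$ extend holomorphically across the dependency locus.

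Two small points. First, the transition relation you wrote, $f_j=f_i\,\alpha_{i,j}$, has the index reversed; with $\alpha_{i,j}=\det(s_1^j,s_2^j)\det(s_1^i,s_2^i)^{-1}$ one needs $f_j=f_i\,\alpha_{i,j}^{-1}$ for $f|_{U_i}:=f_i\det(s_1^i,s_2^i)$ to glue. Second, the last sentence (``after absorbing $f$ into the frame'') oversells what is possible: since $\operatorname{tr}(N^+N^-)=f^2$ while the standard pair has $\operatorname{tr}(E_{12}E_{21})=1$, no single local change of frame simultaneously conjugates $(N^+,N^-)$ to the literal standard pair. What actually holds, and what the paper in fact uses later, is that in the frame $(s_1,s_2)$ on $X\setminus\{f=0\}$ the pair is represented by $f$ times the standard matrices; the lemma's phrasing with ``standard'' in scare quotes should be read in this sense.
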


By standard induction over dimension we can deduce the following.

\begin{Prop}
\label{existence of pairs} 

Let $\pi : E \rightarrow X$ be a holomorphic vector bundle of rank $2$ over a Stein space $X$ of dimension $n$. Then there exist $n+1$ pairs of nilpotent holomorphic automorphism $N_i^+, N_i^-$ in $\End(E)$, $i= 1, 2, \ldots, n+1$ together with (non-zero) holomorphic functions $f_i \in \mathcal{O}(X)$, without common zeros, with the property that on each of the sets $X \setminus \{f_i =0\}$ the bundle $E$ is trivial and locally the pair $(N_i^+, N_i^-)$ is holomorphically conjugated on $X \setminus \{f_i = 0\}$ to the "standard" pair
\[\left(\begin{array}{cc}
    0 & 1 \\
    0 & 0
\end{array}\right),
\left(\begin{array}{cc}
    0 & 0 \\
    1 & 0
\end{array}\right).
\]

\begin{proof}

Choose a discrete set $T$ with the property that it contains at least one point in the smooth part of any irreducible component of $X$. Lemma \ref{Lemma:pair} provides us with a pair of nilpotent holomorphic automorphisms $N_1^+, N_1^-$ in $\End(E)$ and with a function $f_1 \in \mathcal{O}(X)$ such that the bundle $E$ is trivial on $X \setminus \{f_{1} = 0\}$ and locally the pair $(N_1^+, N_1^-)$ is holomorphically conjugate to the "standard" pair. Moreover, by the choice of $T$, the analytic set 
$\{f_{1} = 0\}$ has strictly smaller dimension than $X$, i.e., at most $n-1$. Here by dimension of a complex space we mean the dimension of the smooth part (which equals the maximum of the the dimensions of the smooth parts of the irreducible components).
 
 Now we are going to apply Lemma \ref{Lemma:pair} for a discrete set $T \subset \{f_{1} = 0\}$ which is chosen so that it contains at least one point from the smooth part of each irreducible component of $\{f_{1} = 0\}$ . We get a second pair of nilpotent holomorphic automorphisms $N_2^+, N_2^-$ in $\End(E)$ and a function $f_2 \in \mathcal{O}(X)$ that is not identically vanishing on any irreducible component of $\{f_{1} = 0\}$ and such that the bundle $E$ is trivial on $X \setminus \{f_{2} = 0\}$ and locally the pair $(N_2^+, N_2^-)$ is holomorphically conjugated to the "standard" pair. Observe that the common zero set $\{f_{1} = 0\} \cap \{f_{2} = 0\}$ has dimension strictly smaller than the dimension of $\{f_{1} = 0\}$, i.e., at most $n-2$. After $n$ steps we have that $\dim (\{f_{1} = 0\} \cap \cdots \cap \{f_{n} = 0\})$ is at most zero, thus a discrete set. Using one last time Lemma \ref{Lemma:pair} for $T= \{f_{1} = 0\} \cap \cdots \cap \{f_{n} = 0\}$, provides us with the $n+1$-th pair and concludes the proof.
 
\end{proof}

\end{Prop}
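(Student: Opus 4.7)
The plan is to induct on dimension, iterating Lemma \ref{Lemma:pair} so that at each step the common zero set of the previously produced functions $f_1,\dots,f_k$ drops in dimension by at least one. After at most $n$ drops the common zero set becomes discrete, and one final application of Lemma \ref{Lemma:pair} (with $T$ equal to this discrete set) kills the last remaining common zeros, yielding the required $n+1$ pairs.

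To implement the base step, I would pick a discrete set $T_1 \subset X$ that contains at least one point in the smooth locus of every irreducible component of $X$. Lemma \ref{Lemma:pair} then produces a pair $(N_1^+, N_1^-)$ and a function $f_1 \in \O(X)$ with $f_1|_{T_1}$ nowhere zero; since a holomorphic function which is non-vanishing at a smooth point of an irreducible component cannot vanish identically on that component, the analytic set $\{f_1=0\}$ has dimension strictly less than $\dim X = n$.

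For the inductive step, assume we have produced $(N_j^+,N_j^-)$ and $f_j$ for $j=1,\dots,k$ with $\dim \bigl(\{f_1=0\}\cap\cdots\cap\{f_k=0\}\bigr) \le n-k$. If $n-k\ge 1$, pick a discrete subset $T_{k+1}$ of $\{f_1=0\}\cap\cdots\cap\{f_k=0\}$ meeting the smooth locus of each irreducible component of this analytic set, and apply Lemma \ref{Lemma:pair} with this choice of $T$. The resulting $f_{k+1}$ is non-vanishing on $T_{k+1}$, hence does not vanish identically on any irreducible component of $\{f_1=0\}\cap\cdots\cap\{f_k=0\}$, so cutting by $\{f_{k+1}=0\}$ drops the dimension by at least one. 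After $n$ such steps the intersection $\{f_1=0\}\cap\cdots\cap\{f_n=0\}$ is at most zero-dimensional, i.e., discrete; then one final application of Lemma \ref{Lemma:pair}, this time taking $T$ equal to that discrete intersection itself, yields $f_{n+1}$ non-vanishing on it, so $\{f_1=0\}\cap\cdots\cap\{f_{n+1}=0\} = \emptyset$. The triviality of $E$ on each $X\setminus\{f_i=0\}$ and the local conjugation of $(N_i^+,N_i^-)$ to the standard pair are inherited directly from Lemma \ref{Lemma:pair}.

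The main bookkeeping issue, and the only real obstacle, is justifying the dimension drop cleanly on a possibly singular Stein space. This requires the correct convention that $\dim$ of a complex space is the maximum of the dimensions of the smooth parts of its irreducible components, together with the fact that sections of a coherent sheaf can be prescribed on a discrete set (used inside Lemma \ref{Lemma:pair} to control $f$). Once these are in place, the induction is routine and terminates after exactly $n+1$ applications of the lemma, producing the desired collection.
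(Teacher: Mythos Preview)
Your proposal is correct and follows essentially the same argument as the paper's proof: both iterate Lemma~\ref{Lemma:pair}, at each step choosing the discrete set $T$ to meet the smooth locus of every irreducible component of the current common zero set so that the dimension drops by at least one, and finish with a final application once the intersection is discrete. Your explicit mention of the dimension convention and the coherent-sheaf extension fact mirrors the paper's own remarks.
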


Defining $U_i^+ = \operatorname{Id} + N_i^+, U_i^-=\operatorname{Id} + N_i^-$, we find pairs of unipotent  holomorphic automorphism $U_i^+, U_i^-$ in $\operatorname{U}(E)$, $i = 1, 2, \ldots, n+1$ together with (non-zero) holomorphic functions $f_i \in \mathcal{O}(X)$, without common zeros, i.e., $\{x \in X : f_1(x) = f_2(x) = \cdots = f_{n+1}(x) = 0\} $ with the property that on each of the sets $X \setminus \{f_i =0\}$, the bundle $E$ is trivial and the pair $(U_i^+, U_i^-)$ is holomorphically conjugated to the "standard" pair 
\[\left( \begin{array}{cc}
    1 & 1 \\
    0 & 1
\end{array}\right),
\left( \begin{array}{cc}
    1 & 0 \\
    1 & 1
\end{array}\right).
\]

For a  function $h: X \to \C$ (we will only use continuous or holomorphic functions) and a nilpotent endomorphism $N \in \End(E)$ with $U = \operatorname{Id}+ N$ we define the "replica" $N(h) := h \cdot N$ and $U(h):= \operatorname{Id}+ N(h)$. 

Given $(U_1^+, U_1^-), \ldots, (U_{n+1}^+, U_{n+1}^-)$,
our final goal is to write any null-homotopic special holomorphic automorphism $F$ of $E$ as a finite  product of unipotent automorphisms of the form $U_i^-(z_{2k-1}) U_i^+(z_{2k})$ for holomorphic functions $z_{2k-1}$, $z_{2k}$, $k = 1, 2, \ldots, N_i$ using all the pairs $(U_i^+, U_i^-)$, for a $i = 1, 2, \ldots, n+1$. That is
\[F = \prod_{i=1}^{n+1}\prod_{k=1}^{N_i} U_i^-(z_{2k-1}) U_i^+(z_{2k}).
\]

\section{Factorization into Automorphisms Suitable for a Pair}
\label{section:suitablepairs}

As before, $\pi : E \to X$ is a holomorphic vector bundle over a Stein space $X$. As we have seen in Section \ref{section:pairs}, a pair $(U^+, U^-)$ of unipotent automorphisms of $E$ may "degenerate" to the identity in certain points (e.g., the points where the function $f$ vanishes and the sections $s_1, s_2$ used for the construction of the pair form a basis) contained in a subvariety $\{f = 0\}$ for some $f \in \mathcal{O}(X)$.
 
Thus any product of unipotent automorphisms, being replica of a pair, is necessarily equal to the identity in such points. In this section we will formulate a necessary condition for a special holomorphic automorphism to be written as a product of unipotent automorphisms being replica of a pair. We call this \textit{being suitable for the pair}. Then we prove that any special holomorphic automorphism of $E$ can be written a product of special automorphisms $G_i$, each $G_i$ being suitable for the pair $(U_i^+, U_i^-)$.

Since calculations show that the matrix of such a product of replica of a unipotent pair  differs from the identity by a matrix divisible by $f^3$, we can write a special automorphism $G$ as a product of replica of a pair only when it satisfies this divisibility condition. Here we say that $G \in \End(E)$ is \textit{divisible} by $g$, if every entry of in the matrix $M_G (x)$ is divisible by $g$, where $M_G(x) \in \operatorname{Mat_n}(\C)$ is the matrix representing $G$ with respect to a local trivialization of the vector bundle $E$. Obviously, this condition does not depend on the trivialization. We will write $g \mid (G-\operatorname{Id})$. For our solution to the continuous problem in Subsection \ref{ssection:topologicalULfactorization}, we need even more, namely divisibility by $f^4$.

Therefore we will split our special automorphism into finitely many factors, each of them being equal to the identity on $\{f = 0\}$. Moreover, we will demand that their difference to identity to be divisible by $f^4$. Another obvious necessary condition for the further splitting of these automorphisms $G$ into products of unipotent automorphisms is that they are null-homotopic. For our topological solution to that problem, we even have to demand that the $G$'s are \textit{strongly null-homotopic}. By this we mean that on $\{f = 0\}$ the whole homotopy is identical to $G_t = \operatorname{Id}$, $\forall t \in [0,1]$. Observe that $G_0$ and $G_1$ are equal to the identity on $\{f = 0\}$ by definition.

\begin{Thm}\label{product of suitable}

Let $X$ be a Stein space and $f_i \in \mathcal{O}(X)$, $i= 1,2, \ldots, m$ finitely many holomorphic functions without common zeros. Then every null-homotopic special holomorphic vectorbundle automorphism $F \in \SAut(E)$ can be written as a product 
\[F = \prod_{i=1}^m G_i,
\]
where each of the $G_i \in \SAut (E)$ is a special holomorphic vectorbundle automorphism, whose difference to the identity is divisible by $f_i^4$.
Moreover, the $G_i$'s are  strongly null-homotopic, i.e., they are null-homotopic such that on $\{f_i = 0\}$ the homotopy $(G_i)_t = \operatorname{Id}$, $\forall t \in [0,1]$.

\end{Thm}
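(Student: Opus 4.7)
The plan is to deduce this theorem from Studer's homotopy theorem (Theorem \ref{Thm:Studer}), in the same spirit as the classical Oka principle for Oka pairs of Forster and Ramspott. I would define, for open $U \subset X$, a sheaf of sets $\Psi(U)$ consisting of continuous tuples $(G_1,\ldots,G_m;\, H_1,\ldots,H_m)$, where each $G_i$ is a continuous section of $\SAut(E)$ over $U$ with $f_i^4 \mid (G_i - \operatorname{Id})$, each $H_i : U \times [0,1] \to \SAut(E)$ is a continuous strong null-homotopy from $\operatorname{Id}$ to $G_i$ (so $(H_i)_t \equiv \operatorname{Id}$ on $\{f_i = 0\} \cap U$ for all $t$), subject to the pointwise product constraint $\prod_{i=1}^m G_i = F|_U$. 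The subsheaf $\Phi(U) \subset \Psi(U)$ consists of those tuples for which the $G_i$ depend holomorphically on $x \in U$. Studer's theorem, applied to $\Phi \hookrightarrow \Psi$, will yield $\Phi(X) \neq \emptyset$ provided $\Psi(X) \neq \emptyset$ together with the two local hypotheses (local weak homotopy equivalence and weak flexibility for $C$-pairs). The non-emptiness of $\Psi(X)$ is exactly the topological statement handled independently in Subsection \ref{ssection:topologicalsuitablepairs}.

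To verify that $\Phi \hookrightarrow \Psi$ is a local weak homotopy equivalence, I would work on a small Stein open $U \ni p$ on which $E$ is trivial and on which the divisibility condition $f_i^4 \mid (G_i - \operatorname{Id})$ carves out an affine-type space of sections; there a standard holomorphic approximation of continuous sections (Runge/Cartan) applies, with any residual discrepancy from the product constraint $\prod G_i = F$ absorbed by adjusting the last factor. For the weak flexibility of a $C$-pair $(A,B)$, given tuples on neighborhoods of $A$ and $B$ together with a homotopy on a neighborhood of $A \cap B$, I would construct the global deformation by patching with the aid of the technical deformation lemmas of Forster and Ramspott \cite{Okasche Paare}, which are designed precisely for splitting corrections divisible by prescribed holomorphic functions and for maintaining such divisibility throughout a homotopy.

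The main obstacle is the verification of weak flexibility, because the rigid product constraint $\prod G_i = F$ couples the factors: one cannot patch factor-by-factor independently. Any overlap discrepancy on $A \cap B$ must be rewritten as an ordered product of $m$ corrections, the $i$-th being a small perturbation of $\operatorname{Id}$ whose difference from $\operatorname{Id}$ is divisible by $f_i^4$. Such a splitting is possible because the $f_i$'s have no common zeros, so the ideal generated by $\{f_i^4\}$ is all of $\mathcal{O}(U)$ for sufficiently small Stein $U$; the extra margin of the fourth power (rather than just $f_i$) is what provides room to simultaneously adjust the strong null-homotopies $H_i$ without destroying the condition $(H_i)_t \equiv \operatorname{Id}$ on $\{f_i = 0\}$. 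Packaging this splitting neatly so that it fits the weakly flexible framework of Definition \ref{Def:weaklyflexible} is the delicate technical point, and it is here that the Forster--Ramspott deformation lemmas do the heavy lifting.
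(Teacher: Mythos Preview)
Your overall strategy---apply Studer's Theorem \ref{Thm:Studer} to a pair of sheaves encoding the factorization problem, with the continuous case from Subsection \ref{ssection:topologicalsuitablepairs} supplying the global section of $\Psi$---is exactly the paper's approach. However, your specific sheaf setup diverges from the paper's in ways that create real difficulties.

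First, the paper does \emph{not} build the homotopies $H_i$ into the sheaves. In Subsection \ref{ssection:Oka pairs} the sheaves $\Phi$ and $\Psi$ record only the tuple $(\alpha_1,\ldots,\alpha_m)$ with the constraint $\alpha_1\cdots\alpha_m = F$; the holomorphic sheaf $\Phi$ imposes $f_i \mid (\alpha_i - \operatorname{Id})$, while the continuous sheaf $\Psi$ imposes only $\alpha_i|_{A_i} = \operatorname{Id}$. The strong null-homotopy of the holomorphic $G_i$ then comes \emph{for free}: Studer's theorem produces a homotopy through sections of $\Psi$ from the topological solution $\psi$ to the holomorphic solution $\phi$, and concatenating this with the strong null-homotopy of $\psi$ from Theorem \ref{product of suitable continuous} yields the strong null-homotopy of $\phi$. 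Carrying the $H_i$ as sheaf data would force you to verify completeness and weak flexibility for a sheaf mixing holomorphic and path-space data, which is unnecessary.

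Second, your $\Psi$ imposes continuous divisibility $f_i^4 \mid (G_i - \operatorname{Id})$, but Theorem \ref{product of suitable continuous} only delivers $G_i|_{A_i} = \operatorname{Id}$. These are not equivalent for continuous sections, so the topological theorem does not give a global section of your $\Psi$ as you claim.

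Third, and most importantly, your argument for local weak homotopy equivalence (``absorb the discrepancy in the last factor'') only works at points where $f_m \neq 0$. The paper's key trick (Lemma \ref{Lemma:weak}) is that, since the $f_i$ have no common zeros, every point $p$ has \emph{some} index $k$ with $f_k(p) \neq 0$; on the neighborhood $X \setminus A_k$ the divisibility condition on $\alpha_k$ is vacuous and formula \eqref{formula:express} expresses $\alpha_k$ in terms of the remaining factors, reducing both $\Phi$ and $\Psi$ locally to an \emph{unconstrained} product of $m-1$ Forster--Ramspott coherent subsheaves. This elimination is what makes both the local weak homotopy equivalence and the weak flexibility verification go through; for the latter the paper writes down an explicit thickening $\mathcal{J} \subset \SAut(E)^{m-1}$ with coherent Lie sheaf $\operatorname{Lie}(\mathcal{J})$, giving a global spray \eqref{localspray} around any section of $\Phi$. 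Your outline gestures at the Forster--Ramspott lemmas but misses this structural point that decouples the product constraint.
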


\subsection{The Topological Solution}
\label{ssection:topologicalsuitablepairs}

\begin{Thm} \label{product of suitable continuous}

Let $X$ be a Stein space and $f_i \in \mathcal{O}(X)$, $i= 1,2, \ldots, m$ finitely many holomorphic functions without common zeros. Then every null-homotopic special  continuous vectorbundle automorphism $F \in \SAut(E)$ can be written as a product 
\[F = \prod_{i=1}^m G_i,
\]
where each of the $G_i \in \SAut(E)$ is a special continuous vectorbundle automorphism such that $G_i (x) = \operatorname{Id_{E_x}}$ when $f_i (x) = 0$.  Moreover, each $G_i$ is strongly null-homotopic, i.e., null-homotopic in such a way that on the set $f_i (x) = 0$ the whole homotopy is constant $\operatorname{Id}_{E_x}$. 

\end{Thm}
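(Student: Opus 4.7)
The plan is to use a continuous partition of unity subordinate to the open cover $\{U_i\}_{i=1}^m$, where $U_i = X \setminus \{f_i = 0\}$, combined with the given null-homotopy of $F$, to split $F$ into $m$ factors in a ``bare hands'' manner. Since the $f_i$ have no common zeros, the $U_i$ form an open cover of $X$. As $X$ is Stein (hence paracompact), there is a continuous partition of unity $\{\chi_i\}_{i=1}^m$ subordinate to $\{U_i\}$; in particular $\chi_i$ vanishes identically on the zero set $\{f_i = 0\}$. Fix a continuous null-homotopy $(F_t)_{t\in[0,1]}$ in $\SAut(E)$ with $F_0 = \operatorname{Id}$ and $F_1 = F$.

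Set $\tau_0 \equiv 0$ and $\tau_i(x) = \sum_{j=1}^i \chi_j(x)$, so that $0 = \tau_0(x) \le \tau_1(x) \le \cdots \le \tau_m(x) = 1$ pointwise. Define
\[
G_i(x) := F_{\tau_{i-1}(x)}(x)^{-1} \cdot F_{\tau_i(x)}(x), \qquad i = 1,2,\ldots,m.
\]
Each $G_i$ is a continuous section of $\SAut(E)$ since it is built from continuous operations within the group bundle, and the product telescopes:
\[
G_1(x)\cdot G_2(x) \cdots G_m(x) = F_{\tau_0(x)}(x)^{-1}\cdot F_{\tau_m(x)}(x) = \operatorname{Id}\cdot F(x) = F(x).
\]
When $f_i(x) = 0$, the subordination condition gives $\chi_i(x) = 0$, hence $\tau_i(x) = \tau_{i-1}(x)$, and therefore $G_i(x) = \operatorname{Id}_{E_x}$, as required.

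For the strong null-homotopy, I would simply interpolate the time parameter linearly:
\[
(G_i)_s(x) := F_{\tau_{i-1}(x)}(x)^{-1} \cdot F_{\tau_{i-1}(x) + s(\tau_i(x) - \tau_{i-1}(x))}(x), \qquad s \in [0,1].
\]
At $s = 0$ this evaluates to $\operatorname{Id}$ and at $s = 1$ to $G_i$, so $(G_i)_s$ is a continuous homotopy in $\SAut(E)$ from $\operatorname{Id}$ to $G_i$; and on $\{f_i = 0\}$ one again has $\tau_i = \tau_{i-1}$, forcing $(G_i)_s(x) = \operatorname{Id}_{E_x}$ for every $s$. Since there is no real obstacle beyond the existence of a continuous partition of unity (guaranteed by paracompactness of Stein spaces) and the given global null-homotopy, the construction is essentially a telescoping argument; the only subtlety is arranging the interpolation so that the factor $G_i$ reads off precisely the ``$\chi_i$-portion'' of the homotopy, which is exactly what the partition of unity is designed for.
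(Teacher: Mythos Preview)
Your proof is correct and, in fact, cleaner than the paper's. Both arguments rest on the same idea: use the given null-homotopy $F_t$ together with cutoff data to ``slice'' $F$ along the $t$-direction so that the $i$-th slice only moves where $f_i\neq 0$. The paper implements this by an induction on $m$: for $m=2$ it picks a single cutoff $\chi$ with $\chi\equiv 0$ near $A_1$ and $\chi\equiv 1$ near $A_2$ and sets $\alpha=F_\chi$, $\beta=F_\chi^{-1}F$ (which is exactly your formula with $\tau_1=\chi$); for $m\ge 3$ it excises a neighborhood of $A_1\cap\cdots\cap A_{m-1}$, applies the inductive hypothesis there, and then extends by further cutoffs. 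Your partition-of-unity construction with $\tau_i=\sum_{j\le i}\chi_j$ and $G_i=F_{\tau_{i-1}}^{-1}F_{\tau_i}$ handles all $m$ at once by a single telescoping, avoiding the somewhat involved inductive bookkeeping.

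One incidental difference: the paper's null-homotopies additionally satisfy $\prod_i (G_i)_t = F_t$ for all $t$, whereas your $(G_i)_s$ do not multiply to $F_s$ for intermediate $s$. This extra compatibility is not part of the stated theorem and is not used later, so your simpler interpolation $(G_i)_s = F_{\tau_{i-1}}^{-1}F_{\tau_{i-1}+s\chi_i}$ is enough; if one ever wanted the product property, one could instead take $(G_i)_s = F_{s\tau_{i-1}}^{-1}F_{s\tau_i}$, which telescopes to $F_s$ and still fixes the identity on $\{f_i=0\}$.
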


\begin{proof}

Given a null-homotopic section $F$ of the fibre bundle $\SAut(E)$ over $X$, we will construct continuous sections $G_{i}$ of the same fibre bundle such that their product is equal to $F$ and $G_{i}|_{A_{i}} = \operatorname{Id}_{E_x}$, where $A_{i} = \{f_{i} = 0\}$ and $f_{i}$ are holomorphic functions on $X$ without common zeros. Thus $\cap A_{i} = \emptyset$. 


Since $F$ is null-homotopic, there exists a continuous map $F_{t} : X \rightarrow \SAut(E)$, $\forall t \in [0,1]$ such that $F_{0} = \text{Id}$ and $F_{1} = F$.

First consider the case $m=2$. Let $U_{i}$ be open neighbourhoods of $A_{i}$ in $X$. Since $A_1 \cap A_2 = \emptyset$, we can define a smooth cutoff function $\chi : X \rightarrow [0,1]$ such that $\chi \equiv 0$ on $U_1$ and $\chi \equiv 1$ on $U_2$. Now we put 
\[\alpha(x) = F_{\chi(x)}(x),\ \beta(x) = F_{\chi(x)}^{-1}F(x),\ \forall x \in X.
\]
It is obvious that $\alpha \cdot \beta = F$. At the same time, $\alpha|_{A_1} = F_0 = \operatorname{Id}$ and $\beta|_{A_2} = F_1^{-1} \cdot F = F^{-1} \cdot F = \operatorname{Id}$. Bare in mind that $\alpha$ and $\beta$ are just $G_1$ and $G_2$ that we were looking for.

Now to prove that $\alpha$ and $\beta$ are null-homotopic in such a way that on the set $A_1$ and $A_2$ the whole homotopy is constant $\operatorname{Id}$, respectively, we just place a $t$ in front of $\chi$:
\[\alpha_t(x) = F_{t\chi(x)}(x), \ \beta_t(x) = F_{t\chi(x)}^{-1}F_t(x), \ \forall x \in X.
\]
We easily see that \[\alpha_0 = F_0 = \operatorname{Id},\ \beta_0 = F_0^{-1} F_0 = \operatorname{Id},\]
\[\alpha_1 = F_{\chi} = \alpha,\ \beta_1 = F_\chi^{-1}F_1 =  F_\chi^{-1}F = \beta,
\]
\[\alpha_t|_{A_1} = F_0 = \operatorname{Id},\ \beta_t|_{A_2} = F_t^{-1} \cdot F_t = \operatorname{Id}.
\]
In addition, we have obtained that $\alpha_t \beta_t = F_t$ on $X$.

Now we proceed with the case $m=3$. Let $U$ be an open neighbourhood of $A_1 \cap A_2$. Then on $Y:=X \setminus \overline{U}$, the sets $A_1$ and $A_2$ have empty intersection, so we can apply the previous step. Thus, we obtain null-homotopic special continuous vectorbundle automorphisms $\widehat{\alpha}$ and $\widehat{\beta}$, which are constant the identity on $A_1 \cap Y$ and $A_2 \cap Y$, respectively. At the same time, we have the homotopies $\widehat{\alpha}_t$ and $\widehat{\beta}_t$ such that $\widehat{\alpha}_t = \operatorname{Id}$ on $A_1 \cap Y$, $\widehat{\beta}_t = \text{Id}$ on $A_2 \cap Y$ and $\widehat{\alpha}_t \widehat{\beta}_t = F_t$ on $Y$. We simply put $\widehat{\gamma}_t := \text{Id}$ and $\widehat{\gamma}:= \widehat{\gamma}_0$ on $Y$. It is obvious that $\widehat{\alpha} \widehat{\beta} \widehat{\gamma} = F$ on $Y$ and $\widehat{\alpha}$, $\widehat{\beta}$ and $\widehat{\gamma}$ have the desired properties. In addition, $\widehat{\alpha}_t \widehat{\beta}_t \widehat{\gamma}_t = F_t$ on $Y$.

Consider a smooth cutoff function $\chi : X \rightarrow [0,1]$ such that $\chi \equiv 0$ on $\overline{U}$ and $\chi \equiv 1$ on $X \setminus U_0$, where $U_0 \supset U$ is an open neighbourhood of $U$ such that $U_0 \cap A_3 = \emptyset$. Define $\alpha, \beta : X \rightarrow \SAut(E)$ by
\[\alpha(x) = \left\{\begin{array}{rl}
\widehat{\alpha}_{\chi(x)}, & x \in X \setminus U \\
\operatorname{Id}, & x \in U
\end{array}\right.,\ \beta(x) = \left\{\begin{array}{rl}
\widehat{\beta}_{\chi(x)}, & x \in X \setminus U \\
\operatorname{Id}, & x \in U
\end{array}\right.,
\]
respectively. We put $\gamma = \beta^{-1} \alpha^{-1} F$. It is clear that $\alpha \beta \gamma = F$ on $X$ and $\gamma = \operatorname{Id}$ on $A_3$ because $F = F_1 = \widehat{\alpha}_1 \widehat{\beta}_1 = \alpha \beta$ on $A_3$. Here we have used the fact that $U_0 \cap A_3 = \emptyset$. In this case, $\alpha$, $\beta$ and $\gamma$ are $G_1$, $G_2$ and $G_3$ from the conclusion of the theorem.

Now, as before, by placing a $t$ in front of $\chi$ we construct the homotopies
\[\alpha_t(x) = \left\{\begin{array}{rl}
\widehat{\alpha}_{t\chi(x)}, & x \in X \setminus U \\
\text{Id}, & x \in U
\end{array}\right., \ \beta_t(x) = \left\{\begin{array}{rl}
\widehat{\beta}_{t\chi(x)}, & x \in X \setminus U \\
\text{Id}, & x \in U
\end{array}\right.,
\]
respectively. We also define $\gamma_t = \beta_t^{-1} \alpha_t^{-1} F_t$. 

We  see that by definition, $\alpha_0 = \beta_0 = \gamma_0 = \operatorname{Id}$ on $U$. Outside $U$, we are in the case $m=2$ and we have that $\alpha_0 = \widehat{\alpha}_0 = \operatorname{Id}$, $\beta_0 = \widehat{\beta}_0 = \operatorname{Id}$ and $\gamma_0 = F_0 = \operatorname{Id}$. It is obvious that $\alpha_1 = \alpha$, $\beta_1 = \beta$ and $\gamma_1 = \beta_1^{-1} \alpha_1^{-1} F_1 = \beta^{-1} \alpha^{-1} F = \gamma$.

Following a similar argument to the one before, $\alpha_t|_{A_1} = \operatorname{Id}$, $\beta_t|_{A_2} = \operatorname{Id}$ and $\gamma_t|_{A_3} = (\beta_t^{-1} \alpha_t^{-1} F_t)|_{A_3} = \operatorname{Id}$. Here we have used the fact that $\alpha_t \beta_t = F_t$ from the case $m=2$. Again, it is clear that $\alpha_t \beta_t \gamma_t = F_t$ on $X$.

Now for the general case. Without loss of generality, we can assume that $A_1 \cap \cdots \cap A_{m-1} \neq \emptyset$ (otherwise we simply set $G_m = \operatorname{Id}$ together with the homotopy $(G_m)_t = \operatorname{Id}$). Then we apply the same procedure as in the previous case considering an open neighbourhood $U$ of this intersection, defining $Y := X \setminus \overline{U}$ and so on. 

\end{proof}

\begin{Rem} According to Proposition \ref{existence of pairs}, in the previous theorem, we need $m \le n+1$, where $n$ is the dimension of the Stein space $X$.
    
\end{Rem}

\subsection{Application of the Oka Principle for Oka Pairs}
\label{ssection:Oka pairs}

We will apply Studers Theorem \ref{Thm:Studer}. Let us specify the sheaves. As before $f_i \in \mathcal{O}(X)$, $i= 1,2, \ldots, m$ are  finitely 
many holomorphic functions without common zeros and let $A_i$ denote their zero sets $A_i := \{x \in X : f_i (x) = 0\}$.

We define sheaves $\mathcal{F}_i$ by specifying the presheaves for any $U$ open in $X$ to be 
\[\mathcal {F}_i (U) := \{\alpha \in \SAut_{\text{hol}}(E\vert_U) : f_i \mid (\alpha  - \operatorname{Id})\},
\]
and similarily sheaves $\mathcal{G}_i$ by 
\[\mathcal {G}_i (U) := \{\alpha \in \SAut_{\text{cont}}(E\vert_U) : \alpha\vert_{U \cap A_i} = \operatorname{Id})\}.
\]

The sheaf $\Phi$ is defined as a presheaf by 
\[\Phi (U) := \{(\alpha_1, \alpha_2, \ldots \alpha_m) \in \mathcal {F}_1 (U)\times \mathcal {F}_2 (U) \times \cdots \times \mathcal {F}_m(U) : \alpha_1 \circ \alpha_2 \circ \ldots \circ \alpha_m = f\vert_U\},
\]
and similarily the sheaf $\Psi$ as 
\[\Psi (U) :=\{ (\alpha_1, \alpha_2, \ldots \alpha_m) \in \mathcal {G}_1 (U)\times \mathcal {G}_2 (U) \times \cdots \times \mathcal {G}_m(U) : \alpha_1 \circ \alpha_2 \circ \ldots \circ \alpha_m = f\vert_U \}.
\]

\begin{Lem} \label{Lemma:weak} 

The inclusion of sheaves $\Phi \hookrightarrow \Psi$ is a weak homotopy equivalence.

\begin{proof} Let $x_0\in X$. Since the $f_i$'s have no common zeros, there exists an integer $k$, $1 \le k \le m$ with $f_k (x_0) \ne 0$. Thus there is an open neighborhood $U = X \setminus A_k$ of $x_0$ such that $f_k$ is nowhere zero on $U$. On this neighborhood $U$ the restriction of the sheaf $\mathcal{F}_k$ is isomorphic to the restriction of the sheaf $\SAut_{\text{hol}}$. Hence by expressing 
\begin{equation}
\alpha_k (x) = \alpha_{k-1}^{-1} (x) \circ \cdots \circ \alpha_1^{-1} \circ f(x) \circ \alpha_{m}^{-1} (x) \circ \cdots \circ \alpha_{k+1}^{-1}(x) \label{formula:express} 
\end{equation}
the restriction of $\Phi$ to $U$ is isomorphic to the sheaf $\mathcal{A}_k$ defined for an open set $V\subset U$ by 
\[\mathcal{A}_k (V) := \{(\alpha_1, \alpha_2, \ldots, \widehat{\alpha}_k, \ldots, \alpha_m) \in \mathcal {F}_1 (V) \times \mathcal{F}_2 (V) \times \cdots \times \widehat{\mathcal{F}}_k(V) \cdots \times \mathcal {F}_m (V)\},
\]
and analogously by \eqref{formula:express} the restriction of $\Phi$ to $U$ is isomorphic to the sheaf $\mathcal{B}_k$ defined by
\[\mathcal{B}_k(V) := \{(\alpha_1, \alpha_2, \ldots, \widehat{\alpha}_k, \ldots, \alpha_m) \in \mathcal{G}_1(V) \times \mathcal{G}_2(V) \times \cdots \times \widehat{\mathcal{G}}_k(V) \times \cdots \times \mathcal {G}_m(V)\}.
\]
 
The sheaf $\mathcal{A}_k$ is a coherent subsheaf of the product sheaf $\SAut_{\text{hol}}(E)^{(k-1)}$ in the sense of Forster-Ramspott (see the definitions in \cite{Okasche Paare} 2.1. on pp. 265, 266). In other words, if we denote by $\exp : \operatorname{End}^0(E) \to \SAut(E)$ the exponential map from trace zero endomorphisms (the Lie algebra bundle) to  special automorphisms (the Lie group bundle), then sections in a neighborhood of zero in the coherent subsheaf 
\[f_1 \operatorname{End}^0_{\text{hol}}(E)\times \cdots \times \widehat{f_{k} \operatorname{End}^0_{\text{hol}}(E)} \times \cdots \times f_m \operatorname{End}^0_{\text{hol}}(E)^{m}
\]
are mapped by the exponential map (which in this case is the ($k-1$)-fold product of the above described exponential map) to a neighborhood of the identity in $\mathcal{A}_k \subset \operatorname{Aut}_{\text{hol}}(E)^{(k-1)}$.

The sheaf $\Psi$ is defined as a subsheaf of $\SAut_{\text{cont}}^k(E)$ exactly in such a way as to satisfy the pointwise homotopy condition (PH) of Forster and Ramspott (see \cite{Okasche Paare} Beispiel b) on p. 268). Here we have that $\Psi(x) = \Phi(x)$.
The expressions $\Psi (x)$ and $\Phi (x)$ have the meaning of the reachable points of these sheaves. For definition of reachable points see \cite{Okasche Paare} middle of p. 267 or the discussion before Proposition \ref{ellipticsubmersion} in our paper.

The  equality of reachable points also holds for the pair of sheafs $\mathcal{A}_k \hookrightarrow \mathcal{B}_k$. The weak homotopy equivalence for the inclusion of these sheafs is exactly Forster-Ramspott's condition (RH.0) proved on p. 275 (end) up to p. 276 (middle). It is a consequence of Lemma 1 loc. cit. on p. 269.

\end{proof}

\end{Lem}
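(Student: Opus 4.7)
The plan is to verify the local weak homotopy equivalence directly from the definition, reducing the problem first to a product of simpler sheaves and then to the Oka principle for coherent sheaves of Forster--Ramspott. Fix $x_0 \in X$. Since $f_1, \ldots, f_m$ have no common zeros, some $f_k$ is nonzero at $x_0$, and hence nowhere zero on the open set $U := X \setminus A_k$. On $U$ the divisibility requirement in the definition of $\mathcal{F}_k$ is vacuous, so $\mathcal{F}_k|_U = \SAut_{\text{hol}}(E)|_U$ and similarly $\mathcal{G}_k|_U = \SAut_{\text{cont}}(E)|_U$. For any open $V \subset U$, any tuple $(\alpha_1, \ldots, \alpha_m) \in \Phi(V)$ has $\alpha_k$ determined by the remaining entries and by $f|_V$ through
\[\alpha_k = \alpha_{k-1}^{-1} \circ \cdots \circ \alpha_1^{-1} \circ f \circ \alpha_m^{-1} \circ \cdots \circ \alpha_{k+1}^{-1},\]
and the same expression works in $\Psi$. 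Therefore $\Phi|_U$ and $\Psi|_U$ are isomorphic respectively to the product sheaves $\mathcal{A}_k := \prod_{i \ne k} \mathcal{F}_i|_U$ and $\mathcal{B}_k := \prod_{i \ne k} \mathcal{G}_i|_U$, and it suffices to show that $\mathcal{A}_k \hookrightarrow \mathcal{B}_k$ is a local weak homotopy equivalence near $x_0$.

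The second step is to linearize each factor around the identity using the bundle exponential map $\exp : \End^0(E) \to \SAut(E)$. A neighborhood of the identity in $\mathcal{F}_i(V)$ is parametrized by holomorphic sections of the coherent subsheaf $f_i \End^0_{\text{hol}}(E) \subset \End^0_{\text{hol}}(E)$, because $\operatorname{Id} + f_i N$ has logarithm $\log(\operatorname{Id} + f_i N) \in f_i \End^0(E)$ with series converging on a small neighborhood of $N = 0$. Similarly, a neighborhood of the identity in $\mathcal{G}_i(V)$ is parametrized by continuous sections of the same coherent subsheaf, and these automatically vanish on $V \cap A_i$ thanks to the factor $f_i$. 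Consequently the pointwise ``reachable'' fibres of $\mathcal{A}_k$ and $\mathcal{B}_k$ agree over every point of $X$, which is exactly the condition (PH) of Forster--Ramspott.

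The final ingredient is the Oka principle for coherent subsheaves of Forster--Ramspott \cite{Okasche Paare}, specifically condition (RH.0) and their Lemma~1: the inclusion of holomorphic sections of such a coherent subsheaf into its continuous counterpart is a local weak homotopy equivalence. Applied to the product of the $m-1$ sheaves $f_i \End^0_{\text{hol}}(E)$ ($i \ne k$) and transported back via the exponential and the formula displayed above, this produces, for every continuous family $g : \B \to \Psi(U)$ with $g|_{\partial\B} \subset \Phi(U)$, a homotopy $g_t : \B \to \Psi(V)$ rel boundary from $g$ to a holomorphic family in $\Phi(V)$. The main technical obstacle is bookkeeping rather than a deep theorem: one must shrink $U$ to a sufficiently small neighborhood $V$ of $x_0$ so that $g$ lands in an exponential chart around the identity (uniformly in the ball parameter), and ensure that the Forster--Ramspott deformation can be pushed back through $\exp$ and through the $\alpha_k$ formula without destroying the boundary condition or the product identity $\alpha_1 \circ \cdots \circ \alpha_m = f|_V$.
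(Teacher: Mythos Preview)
Your proposal is correct and follows essentially the same approach as the paper: both reduce to the product sheaves $\mathcal{A}_k \hookrightarrow \mathcal{B}_k$ by eliminating the $k$th factor via the displayed formula for $\alpha_k$, identify $\mathcal{A}_k$ with a coherent subsheaf (in the Forster--Ramspott sense) via the exponential map, verify the equality of reachable points (condition (PH)), and then invoke Forster--Ramspott's condition (RH.0) and their Lemma~1 for the local weak homotopy equivalence. Your final paragraph's remark about shrinking so that $g$ lands in an exponential chart around the identity is slightly imprecise---for indices $i\neq k$ with $f_i(x_0)\neq 0$ the sections $\alpha_i$ need not be near $\operatorname{Id}$---but this is exactly what the Forster--Ramspott machinery handles via the group structure, and since you defer to their results for this step, the argument stands.
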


\begin{Rem}

The algebra/group structure in \cite{Okasche Paare} is overkill. A dominating spray restricted to a coherent subsheaf of a vector bundle  would have been sufficient (see Studer's Ph.D. Thesis Prop. 4.1). The exponential map is just a particular instance of a spray.

\end{Rem}
 
By using cutoff functions (partition of unity) it follows:
 
\begin{Lem} 

The sheaf $\Psi$ satisfies condition (2) of Theorem \ref{Thm:Studer}.
 
\end{Lem}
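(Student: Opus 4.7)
The plan is to verify that every $C$-pair $(A,B)$ in $X$ is ordered flexible for $\Psi$, with no restriction on the size of $B$; condition (2) of Theorem \ref{Thm:Studer} will then hold trivially with $U = X$ at each point. The argument uses that $\Psi$ consists of continuous sections and that its defining conditions (each $\alpha_i$ equals the identity on $A_i$, and the composition equals $F$) are pointwise in $x$, so they are preserved under any pointwise substitution of one $\Psi$-valued section by another.

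Given neighborhoods $U \supset A$, $V \supset B$, $W \supset A \cap B$ and a triple $a : \B \to \Psi(U)$, $b : \B \to \Psi(V)$, $c_s : \B \to \Psi(W)$ as in Definition \ref{Def:weaklyflexible}, I would take the constant homotopy $a_t := a$ and set $c_{s,t} := c_{s(1-t)}$. These satisfy conditions (3) and (4) immediately: $c_{s,1} = c_0$ is independent of $s$, and $r_{\mathring A} \circ a_t = r_{\mathring A} \circ a_0$. For $b_t$ I choose a continuous cutoff $\chi : X \to [0,1]$ with $\chi \equiv 1$ on a neighborhood $W' \subset W$ of $A \cap B$ and $\operatorname{supp}(\chi) \subset W \cap V$; normality of the (metric) space $X$ combined with the $C$-pair hypothesis separating $\overline{A \setminus B}$ from $\overline{B \setminus A}$ supplies such a $\chi$. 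Then I define
\[ b_t(u)(x) := \begin{cases} c_{1 - \chi(x) t}(u)(x), & x \in W \cap V, \\ b(u)(x), & x \in V \setminus \operatorname{supp}(\chi), \end{cases} \]
which is continuous because $c_1 = r_W \circ b$ makes the two branches agree where $\chi(x) = 0$. On $W'$ one has $b_t = c_{1-t} = c_{1,t}$, so condition (1) holds, and the boundary-in-$u$ constancy of $a_t$, $b_t$, $c_{s,t}$ follows at once from the assumed $s$-independence of $c_s|_{\partial \B}$.

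I expect essentially no obstacle: the core of the argument is a standard partition-of-unity interpolation in the continuous category. The only substantive check is that each deformed section still lies in $\Psi$; this is automatic because at every point $x$ the value $b_t(u)(x)$ is either $b(u)(x)$ or $c_s(u)(x)$ for some $s$, both of which lie in the fibre of $\Psi$ at $x$. The analogous remark applies to $a_t$ and $c_{s,t}$, and if desired the neighborhoods can be shrunk to $U'$, $V'$, $W'$ to match Definition \ref{Def:weaklyflexible} verbatim.
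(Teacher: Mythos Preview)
Your argument is correct and is precisely the cutoff-function construction the paper alludes to; the paper itself offers no details beyond the phrase ``By using cutoff functions (partition of unity) it follows,'' so you have supplied what the authors left implicit. The essential point you identify---that the defining conditions of $\Psi$ are pointwise in $x$, so splicing values of $b$ and $c_s$ via $\chi$ keeps the result in $\Psi$---is exactly the mechanism intended.
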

 
\begin{Lem} 

The sheaf $\Phi$ satisfies condition (1) of Theorem \ref{Thm:Studer}.

\end{Lem}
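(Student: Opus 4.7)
The plan is to verify the two parts of condition (1) of Theorem \ref{Thm:Studer} separately: completeness of the metric on \(\Phi\), and weak flexibility of every \(C\)-pair \((A,B)\) whose second element lies in a suitable neighborhood.

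First I would handle the metric structure, which is routine. Equip each \(\Phi(U)\) with the compact-open topology inherited from the product of the spaces \(\SAut_{\text{hol}}(E|_U)\) of holomorphic sections. Since holomorphic sections of a holomorphic vector bundle over an open set in a Stein space form a Fréchet space, and the subset \(\Phi(U)\) is cut out by the closed conditions \(f_i\mid(\alpha_i-\operatorname{Id})\) together with \(\alpha_1\circ\cdots\circ\alpha_m=F|_U\), the induced distance is complete by the Weierstrass convergence theorem.

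For the flexibility condition, given \(p\in X\) I would pick an index \(k\) with \(f_k(p)\ne 0\) and let \(U\) be the open set on which \(f_k\) is nowhere zero. By the local isomorphism already established in the proof of Lemma \ref{Lemma:weak}, on \(U\) the sheaf \(\Phi\) is isomorphic to the coherent analytic sheaf \(\mathcal{A}_k\), and a neighborhood of the identity inside \(\mathcal{A}_k\) is parametrized, via the exponential map, by sections of the coherent sheaf of Lie algebras
\[\mathcal{L} := \prod_{i\ne k} f_i\operatorname{End}^0_{\text{hol}}(E).\]
For a \(C\)-pair \((A,B)\) with \(B\subset U\), I would translate the given triple \((a,b,c_s)\) into the linear picture using \(\log\), shrinking the neighborhoods slightly to stay inside the exponential chart, so that the problem reduces to deforming sections of the coherent sheaf \(\mathcal{L}\) in a prescribed way.

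The key tool is then the Cartan attaching lemma on a \(C\)-pair: for the coherent sheaf \(\mathcal{L}\) on the Stein compact \(A\cup B\), the \(\mathcal{O}(B)\)-convexity of \(A\cap B\) together with the separation \(\overline{A\setminus B}\cap\overline{B\setminus A}=\emptyset\) provides a bounded additive splitting of sections over a neighborhood of \(A\cap B\) into a sum of sections over neighborhoods of \(A\) and of \(B\). This is precisely the ingredient used in \cite{Okasche Paare} (Lemma 1, p.\,269). Applying this splitting parametrically in \(s\) to the family \(c_s-c_0\) produces the desired homotopies \(a_t,b_t,c_{s,t}\); exponentiating back yields tuples in \(\Phi\), while the quantitative estimate in the splitting gives condition (4) of Definition \ref{Def:weaklyflexible} by choosing the auxiliary neighborhoods small enough. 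The hard part will be the simultaneous bookkeeping of the parameters in \(\B\times[0,1]\times[0,1]\) together with the boundary constraint that \(a_t|_{\partial\B}\), \(b_t|_{\partial\B}\) and \(c_{s,t}|_{\partial\B}\) remain frozen and the terminal condition that \(c_{s,1}\) be independent of \(s\); I expect to arrange this by iterating the splitting and successively absorbing the \(s\)-dependence of \(c_s\) into \(a_t\) and \(b_t\), exactly as in the Forster-Ramspott verification of (RH.0) on pp.\,275--276 of \cite{Okasche Paare}.
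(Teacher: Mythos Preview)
Your completeness argument is fine and in fact more direct than the paper's (which defers completeness to a separate lemma and deduces it from coherence of a spray parameter sheaf). The approximation half of weak flexibility---deforming $b$ over $B$ using the coherent subsheaf structure of $\mathcal{A}_k$---is also what the paper does. The genuine gap is in the gluing half. Weak flexibility asks you to produce a homotopy $a_t$ in $\Phi(U')$ for a neighborhood $U'$ of \emph{all} of $A$, but only $B$ is assumed to lie in your $U=\{f_k\ne 0\}$; the compact $A$ may meet $\{f_k=0\}$. Over such points the isomorphism $\Phi|_U\cong\mathcal{A}_k|_U$ is unavailable, so you cannot ``translate $a$ into the linear picture via $\log$'' as you propose. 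Even though $\mathcal{L}=\prod_{i\ne k}f_i\operatorname{End}^0(E)$ is coherent on all of $X$ and the additive Cartan splitting over $(A,B)$ hands you a section $(v_i)_{i\ne k}$ of $\mathcal{L}$ near $A$, acting by $\alpha_i\mapsto\alpha_i\exp(v_i)$ for $i\ne k$ and recovering $\alpha_k$ from the constraint $\alpha_1\cdots\alpha_m=F$ produces an $\alpha_k^{\mathrm{new}}$ with no reason to satisfy $f_k\mid(\alpha_k^{\mathrm{new}}-\operatorname{Id})$ where $f_k$ vanishes. Your deformation leaves $\Phi$.

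The paper's remedy is a different, \emph{global} spray that singles out no index: let $\mathcal{J}\subset\SAut(E)^{m-1}$ be cut out by $f_1\mid(\beta_1-\operatorname{Id})$, $f_i\mid(\beta_{i-1}^{-1}\beta_i-\operatorname{Id})$ for $2\le i\le m-1$, and $f_m\mid(\beta_{m-1}-\operatorname{Id})$, acting on $(\alpha_1,\dots,\alpha_m)$ by $(\alpha_1\beta_1^{-1},\beta_1\alpha_2\beta_2^{-1},\dots,\beta_{m-1}\alpha_m)$. This preserves both the product and \emph{all} divisibility conditions simultaneously, so it is a thickening of $\Phi$ over every open set of $X$. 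Its linearization $\mathrm{Lie}(\mathcal{J})\subset\operatorname{End}^0(E)^{m-1}$ is coherent, and the Cartan splitting for this sheaf (Studer's Corollary~3.1, or Forster--Ramspott's Lemma~3, p.~273) yields the required small deformation of $a$ staying inside $\Phi$ over all of $A$. As a side remark, the Forster--Ramspott passage you cite for (RH.0) on pp.~275--276 is the verification of local weak homotopy equivalence (the content of Lemma~\ref{Lemma:weak}), not of weak flexibility; for the latter the relevant inputs from \cite{Okasche Paare} are Lemma~2 (approximation, p.~271) and Lemma~3 (gluing, p.~273).
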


\begin{proof}

We use the notation for weakly flexible pair $(A,B)$ as in  Definition  \ref{Def:weaklyflexible} from Section \ref{section:preliminaries}. We can assume that $B$ is contained in $U = X \setminus A_k$ as in the proof of Lemma \ref{Lemma:weak}. The proof proceeds as usually in $2$ steps, approximation and gluing of nearby sections.
 
The approximation part, i.e., bringing the (parametrized) section $b$ as close as we wish to $a$ in a neighborhood of $A \cap B$, is achieved by approximation of the homotopy $c_s$ connecting $a$ and $b$ over (a neighborhood of) $A \cap B$ by a homotopy $b_s$ over (a neighborhood of) $B$ in a coherent sheaf setting. More precisely, as explained in the proof of Lemma \ref{Lemma:weak}, the section $b$ is a section of the coherent subsheaf (in the sense of Forster-Ramspott) of the product sheaf $\SAut_{\text{hol}}(E)^{(k-1)}$. Thus approximation is done in the proof of Satz 2 starting at p. 275. The relevant part for approximation in the proof of Satz 2 is the proof of b) in the middle of p. 276, which is based on Lemma 2 on p. 271. Lemma 2 is the case of sections close to identity. One reduces to that case by dividing the homotopy into finitely many steps, so that the "difference" (= quotient of automorphisms) between two steps is then near to the identity.

In order to glue the (parametrized) sections $a$ and $b_1$, which are close over (a neighborhood) of $A \cap B$ into a global section over (a neighborhood) of $A \cup B$, we use the following "thickening" (also called a \textit{local spray} by Forstneri\v c):

The (fibrewise) map $\SAut(E)^{m-1} \times \SAut(E)^{m}$ given by
\begin{multline}
(\beta_1(x), \beta_2(x), \ldots, \beta^{m-1}(x)) \times (\alpha_1 (x), \alpha_2 (x), \ldots, \alpha_k (x)) \\
\mapsto (\alpha_1 (x) \beta_1(x)^{-1}, \beta_1(x) \alpha_2 (x) \beta_2 (x)^{-1}, \ldots, \beta_{l-1} (x) \alpha_l (x) \beta_l(x)^{-1}, \ldots, \beta_{m-1} (x) \alpha_m (x)) 
\end{multline}
is a "thickening" for the sub sheaf $\mathcal{H}_f$ of holomorphic sections of $\SAut (E)^m$ defined by 
\[\mathcal{H}_f (U) := \{(\alpha_1, \alpha_2, \ldots, \alpha_m) : \alpha_1 \circ \alpha_2 \circ \ldots \circ \alpha_m = f\vert_U\}.
\]

The subsheaf $\mathcal{J}$ of $\SAut(E)^{m-1}$ given by 
\begin{multline}
\mathcal{J} = \{(\beta_1, \beta_2, \ldots, \beta^{m-1}) \in \SAut(E)^{m-1}:
\\ f_1 \mid (\beta_1 - \operatorname{Id}), f_2 \mid (\beta_1^{-1}\beta_2-\operatorname{Id}), \ldots, f_{m-1} \mid (\beta_{m-2}^{-1} \beta_{m-1}-\operatorname{Id}), f_m \mid (\beta_{m-1} - \operatorname{Id})\}
\end{multline}
leaves the sheaf $\Phi$ invariant.

Define the subsheaf $\text{Lie}(\mathcal{J})$ of $\End^0(E)^{m-1}$ corresponding to $\mathcal{J}$ as
\begin{multline} 
\text{Lie}(\mathcal{J}) = \{(v_1, v_2, \ldots, v_{m-1}) \in \End^0(E)^{m-1}: \\ f_1 \mid v_1, f_2 \mid (v_1-v_2), \ldots, f_{m-1} \mid (v_{m-2} - v_{m-1}), f_m \mid v_{m-1}\},
\end{multline}
which is a coherent subsheaf of $\End^0(E)^{m-1}$. 

The map 
\begin{multline} \label{localspray}
v = (v_1, v_2, \ldots, v_{k-1}) \mapsto a(x) \exp(v) \\
:= (\alpha_1(x)\exp(-v_1), \exp(v_1)\alpha_2(x)\exp(-v_2), \ldots, \\ \exp(v_{l-1})\alpha_l(x)\exp(-v_{l)}, \ldots, \exp(v_{m-1})\alpha_k(x)
\end{multline}
is the required (even global) spray around the section $a = (\alpha_1, \alpha_2, \ldots, \alpha_m) \in \Phi$.

The transition function now comes from $\log(a(x)\exp(v(x)))$ and the gluing
can be achieved by Corollary 3.1. in \cite{Studer1}. Alternatively, by the
method of Cartan, the gluing is done in Lemma 3 on p. 273 in \cite{Okasche Paare}.

\end{proof}

\begin{Lem} 

The sheaf $\Phi$ is complete.

\begin{proof}

A subsheaf of a free $\mathcal{O}$-module sheaf is complete if and only if it is coherent (see \cite{Okasche Paare} Hilfssatz 1 on p. 266). Since a neighborhood of any section of $\Phi$ is parametrized by the sections near zero of the coherent sheaf $\text{Lie}(\mathcal{J})$ (see equation \ref{localspray}) the assertion follows.

\end{proof}

\end{Lem}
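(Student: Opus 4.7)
The plan is to reduce completeness of $\Phi$ to the coherence of the linearizing sheaf $\text{Lie}(\mathcal{J})$ and then invoke the classical Forster--Ramspott criterion that every coherent subsheaf of a free $\mathcal{O}$-module sheaf is complete in the natural Fr\'echet topology of uniform convergence on compact subsets of the base.

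First I would establish a local parametrization of $\Phi$ around an arbitrary section. Fix $a = (\alpha_1,\ldots,\alpha_m) \in \Phi(U)$ and use the fibrewise spray \eqref{localspray}, which sends a tuple $v = (v_1,\ldots,v_{m-1}) \in \End^0(E)^{m-1}$ to $a \cdot \exp(v) \in \SAut(E)^m$. Because the $\exp(\pm v_i)$ factors telescope, the product of the entries of $a \cdot \exp(v)$ is again $\alpha_1 \cdots \alpha_m = f|_U$. A short computation should show that the $i$-th entry differs from the identity by something divisible by $f_i$ precisely when $v \in \text{Lie}(\mathcal{J})$---this is exactly why the divisibility conditions defining $\text{Lie}(\mathcal{J})$ are arranged as they are. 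Combining this with the fact that $\exp$ is a fibrewise local biholomorphism near the identity, I would conclude that the spray gives a local homeomorphism between a neighborhood of $0 \in \text{Lie}(\mathcal{J})(U)$ and a neighborhood of $a \in \Phi(U)$ in the Fr\'echet topology of uniform convergence on compacts.

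Next I would verify that $\text{Lie}(\mathcal{J})$ is a coherent $\mathcal{O}$-subsheaf of the free sheaf $\End^0(E)^{m-1}$. Each defining condition $f_i \mid (v_{i-1} - v_i)$ cuts out a coherent subsheaf (as the kernel of an $\mathcal{O}$-linear map into a coherent quotient by the ideal $(f_i) \subset \mathcal{O}$, tensored with the locally free sheaf $\End^0(E)$), and finite intersections of coherent subsheaves of a free sheaf remain coherent. Applying Hilfssatz 1 of \cite{Okasche Paare} to $\text{Lie}(\mathcal{J})$ then gives that this sheaf is complete in its natural Fr\'echet topology.

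The conclusion follows by transferring completeness from $\text{Lie}(\mathcal{J})$ to $\Phi$ via the local homeomorphism of the first step, using that completeness of a metric sheaf is a local property, so it suffices to have a complete neighborhood basis around every section. I expect the main obstacle to be verifying that the spray actually yields a \emph{bijection} (and not merely a surjection) between $\text{Lie}(\mathcal{J})$-sections near zero and $\Phi$-sections near $a$: given a $\Phi$-section $b$ close to $a$, one must show that the logarithm $\log(a^{-1} b)$, interpreted entry by entry in the appropriate twisted sense, actually lands in $\text{Lie}(\mathcal{J})$, i.e., that each of its components inherits the required $f_i$-divisibility from the fact that both $a$ and $b$ satisfy the $\Phi$-conditions.
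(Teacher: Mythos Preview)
Your proposal is correct and follows essentially the same route as the paper: both reduce completeness of $\Phi$ to that of the coherent subsheaf $\text{Lie}(\mathcal{J}) \subset \End^0(E)^{m-1}$ via the local parametrization \eqref{localspray}, and then invoke Hilfssatz~1 of \cite{Okasche Paare}. You simply spell out details the paper leaves implicit---the verification that $\text{Lie}(\mathcal{J})$ is coherent and that the spray gives a genuine local bijection between $\text{Lie}(\mathcal{J})$-sections near zero and $\Phi$-sections near $a$.
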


\begin{proof}[Proof of Theorem \ref{product of suitable}]

 First remark that if the holomorphic functions $f_1, f_2, \ldots f_m$ have no common zeros, so do their fourth powers $f_1^4, f_2^4, \ldots f_m^4$ and $f_i(x) = 0$ if and only if $f_i^4 (x) = 0$. Theorem \ref{product of suitable continuous} provides a global section $\psi \in \Gamma(\Psi,X)$ of
 the sheaf $\Psi$. If we use the sheaf $\Phi$ with $f_i$ replaced by $f_i^4$, we conclude from Theorem \ref{Thm:Studer} that $\psi$ is homotopic (by global sections of $\Psi$) to a global section $\phi \in \Gamma(\Phi, X)$ of $\Phi$. Let $G_1, \ldots, G_m$ be  the components of this holomorphic section $\phi$. They satisfy all demanded properties. Firstly, $\phi$ is null-homotopic, and thus its components. This is because $\psi$ was null-homotopic by construction and $\phi$ is homotopic to $\psi$. Secondly, the components of $\psi$ are strongly null-homotopic and since the homotopy from $\psi$ to $\phi$ is by sections of the sheaf $\Phi$, the same holds for the components $G_i$ of $\psi$.
 
\end{proof} 

\section{Factorization of Suitable Automorphisms into Unipotent Factors}
\label{section:factorizationofsuitableautomorphisms}

Let $\pi : E \rightarrow X$ be a holomorphic vector bundle of rank $2$, where $X$ is a Stein space of dimension $n$. We are given a null-homotopic special holomorphic automorphism $F$ of $E$.

By Proposition \ref{existence of pairs} we have $n+1$ pairs $N_i^+, N_i^-$ together with the holomorphic functions $f_i$ constructed in Lemma \ref{Lemma:pair}, such that the $f_i$ have no common zeros. Applying Theorem \ref{product of suitable}, we can factorize $F$ as a a product 
\[F = \prod_{i=1}^m G_i,
\]
where each of the $G_i \in \SAut(E)$ is a special holomorphic vectorbundle automorphism, whose difference to the identity is divisible by $f_i^4$.
Moreover, the $G_i$'s are  strongly null-homotopic, i.e., they are null-homotopic such that on $\{ f_i=0 \}$ the homotopy $(G_i)_t = \operatorname{Id}$, $\forall t \in [0,1]$.

The goal of this section is to prove that each of the automorphisms $G_i$ can be written further as a finite product of replicas of the corresponding unipotent pair 
\[G_i = U_i^- (z_1) U_i^+(z_2) \cdots U_i^- (z_{2k-1}) U_i^+(z_{2k}),
\]
where the number $k$ may depend on $i$.

From now on we concentrate on this problem and recall the local setting from Section \ref{section:pairs}. In some trivialization $E \simeq U_{i} \times \mathbb{C}^2$, the sections $s_1, s_2$ of $E$ are represented by vectors $s_1^{i}(x), s_2^{i}(x) \in \C^2$ which may become linearly dependent in some points $x$. We denote by $S_i(x)$ the matrix which has the above sections on its columns. We have that 
\[N^+(x) = S_i(x) \cdot\left(\begin{array}{cc} 0 & f(x) \\ 0 & 0 \end{array}\right)\cdot S_i^{-1}(x) = S_i(x) \cdot\left(\begin{array}{cc} 0 & f_i(x) \\ 0 & 0 \end{array}\right)\cdot S_i^{\#}(x)
\]
and
\[N^-(x) = S_i(x) \cdot\left(\begin{array}{cc} 0 & 0 \\ f(x) & 0 \end{array}\right)\cdot S_i^{-1}(x) = S_i(x) \cdot\left(\begin{array}{cc} 0 & 0 \\ f_i(x) & 0 \end{array}\right)\cdot S_i^{\#}(x),
\]
where $S_i^{\#}(x)$ is the adjoint of $S_i(x)$ and $f_i  = \dfrac{f}{\det S_i} \in \O (U_i)$ (see Section \ref{section:pairs}). Remember that $U^{\pm} = \operatorname{Id} + N^{\pm}$ and $U^{\pm}(h) = \operatorname{Id} + h\cdot N^{\pm}$. 
Our main result, namely Theorem \ref{Thm:main} will then immediately follow from the following result.

\begin{Thm} \label{factorization-into-replicas}

 Let $U^+, U^-$ and $f \in \O(X)$ be as above and let $G \in SAut(E)$
 with the properties that $f^4 \mid G - \operatorname{Id}$ and $G$ is strongly null-homotopic. Then 
 \[G = U^-(h_1) \cdot U^+(h_2) \cdot \ldots \cdot U^-(h_{2k-1}) \cdot U^+(h_{2k})
 \]
 for some integer $k$ and holomorphic functions $h_i \in \O(X)$, $i = 1, 2, \ldots, k$.
 
\end{Thm}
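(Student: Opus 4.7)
The plan is to apply the Oka principle for sections of stratified elliptic submersions (Theorem~\ref{t:forstneric}) to a submersion over $X$ whose holomorphic sections correspond to factorizations of $G$ of the desired form. Fix an integer $k \ge 1$ (to be chosen large enough later), and consider the holomorphic map
\[\mu_k\colon X \times \C^{2k} \to \SAut(E), \quad (x, h_1, \ldots, h_{2k}) \mapsto U^-(h_1)(x) \cdot U^+(h_2)(x) \cdots U^+(h_{2k})(x).\]
Let $\pi_k \colon Z_k \to X$ be the projection onto the first coordinate from
\[Z_k := \{(x, h_1, \ldots, h_{2k}) \in X \times \C^{2k} : \mu_k(x, h_1, \ldots, h_{2k}) = G(x)\}.\]
A holomorphic section of $\pi_k$ over $X$ is precisely a choice of holomorphic $h_1, \ldots, h_{2k} \in \O(X)$ providing the required factorization of $G$.

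Next, stratify $X$ with $X_0 := \{x \in X : f(x) = 0\}$ at the bottom (refining further to make strata smooth if needed). Since $N^\pm$ are divisible by $f$, each replica $U^\pm(h)$ is identically the identity on $X_0$, and $\pi_k^{-1}(X_0) \cong X_0 \times \C^{2k}$ is a trivial bundle; the hypothesis $f^4 \mid G - \operatorname{Id}$ forces $G|_{X_0} = \operatorname{Id}$, so the zero section is a holomorphic section of $\pi_k$ over $X_0$. Over $X \setminus X_0$, the bundle $E$ is trivial and $(U^+, U^-)$ is holomorphically conjugate to the standard unipotent pair, so the submersion $\pi_k|_{X \setminus X_0}$ becomes locally the usual $\mathrm{SL}_2$-factorization problem. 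A fibre-dominating spray will be built from the natural gauge moves on factorizations (insertions of $A \cdot A^{-1}$ between consecutive factors for $A$ in unipotent one-parameter subgroups of $\mathrm{SL}_2(\C)$, following the strategy cited in the section introduction); the resulting vertical vector fields are polynomial in the $h_j$ and hence $\C$-complete, and for $k$ sufficiently large they span the vertical tangent space. Together with the trivial fibration over $X_0$ this assembles into a stratified spray for $\pi_k$.

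For the topological input I use Lemma~\ref{lem:novelties}, which factorizes continuous $\mathrm{SL}_2$-valued maps close to the identity into a product of a bounded number of replicas of the standard pair. Using the strong null-homotopy $G_t$ of $G$ (so $G_t \equiv \operatorname{Id}$ on $X_0$), I subdivide $[0,1]$ finely enough that each successive ratio $G_{t_{j+1}} G_{t_j}^{-1}$ is uniformly close to the identity in every local trivialization of $E$ over the complement of $X_0$; the strong homotopy, together with $f^4 \mid G - \operatorname{Id}$, keeps the corresponding replica parameters vanishing on $X_0$ so that they extend continuously across $X_0$ as the zero section. Concatenating produces a continuous section of $\pi_k$ (for a suitable $k$) that is holomorphic on $X_0$, and Theorem~\ref{t:forstneric} deforms it, keeping it fixed on $X_0$, to a holomorphic section — exactly the required factorization.

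The main technical obstacle is the construction of the dominating spray on $\pi_k|_{X \setminus X_0}$: one must assemble enough holomorphic insertion and commutation moves on $\mathrm{SL}_2$-factorizations to span the vertical tangent space of $\pi_k$, and verify that these extend compatibly to a stratified spray across $X_0$ (the latter being essentially automatic since over $X_0$ the fibre is the full $\C^{2k}$). It is precisely here, together with the rank-$2$-specific Lemma~\ref{lem:novelties}, that the rank~$2$ hypothesis on $E$ becomes essential, since the analogous moves in higher rank fail to provide a dominating family.
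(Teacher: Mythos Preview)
Your overall architecture matches the paper's: set up the fibration $\pi_k\colon Z_k\to X$, show it is stratified elliptic, produce a continuous section, and apply Theorem~\ref{t:forstneric}. But there is a genuine gap at the very place you call ``essentially automatic.''

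The map $\pi_k$ is \emph{not} a submersion along the locus $\mathrm{Sing}=\{h_2=h_3=\cdots=h_{2k-1}=0\}$; this is precisely the singular set identified in Proposition~\ref{ellipticsubmersion}, and the stratified elliptic structure is only established on $Z_k\setminus\mathrm{Sing}$ (more precisely, on its reachable points). Your zero section over $X_0=\{f=0\}$ lies entirely inside $\mathrm{Sing}$, so once you remove $\mathrm{Sing}$ to get a submersion, the section you want to interpolate is gone. The related claim that $\pi_k^{-1}(X_0)\cong X_0\times\C^{2k}$ yields an elliptic stratum is misleading: set-theoretically the fibre over a point of $X_0$ is indeed all of $\C^{2k}$, because $\mu_k$ collapses to the identity there, but this very collapse means $\pi_k$ has no submersion structure near $X_0$, and Lemma~\ref{one_equation} shows that the genuine (reachable) fibre is cut out by the equation $\widetilde Q^{k-1}_{11}=a(x)$, whose only singular point is the origin. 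The paper resolves this not by interpolation on $X_0$ but by a simple trick: after obtaining the continuous section of $\pi_k$ (which does pass through $\mathrm{Sing}$ over $X_0$), one appends four extra factors $U^-(1)\,U^+(0)\,U^-(-1)\,U^+(0)=\mathrm{Id}$, obtaining a continuous section of $\pi_{k+2}$ with $h_{2k+3}\equiv -1$, hence disjoint from $\mathrm{Sing}$; only then is Theorem~\ref{t:forstneric} invoked on $Z_{k+2}\setminus\mathrm{Sing}$, with no interpolation needed.

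A second, smaller gap: your subdivision of $[0,1]$ so that successive ratios $G_{t_{j+1}}G_{t_j}^{-1}$ are ``uniformly close to the identity'' presupposes a \emph{uniform} homotopy, which is not automatic on the non-compact set $X\setminus X_0$. The paper spends the bulk of Theorem~\ref{topologicalfactorization-into-replicas} on this: first it makes $G$ equal to $\mathrm{Id}$ on a full neighbourhood of $X_0$ (using Lemma~\ref{lem:novelties}(2) and cutoffs), then reduces via Gram--Schmidt and the Special Whitehead Lemma~\ref{lem:novelties}(3) to a homotopy into the compact group $\operatorname{SU}_2$, and finally invokes Calder--Siegel to replace it by a uniform homotopy before subdividing. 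Your sketch of ``subdivide and apply Lemma~\ref{lem:novelties}'' captures only the last of these steps.
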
  

The proof of this theorem is contained in Subsection \ref{ssection: holomorphicULfactorization}. As a preparation, in Subsection \ref{ssection:stratified}, we prove that for a certain fibration, the Oka principle for stratified elliptic submersions, namely Theorem \ref{t:forstneric}, is applicable. In Subsection \ref{ssection:topologicalULfactorization} we provide a topological solution for our problem.

\subsection{Preparing for the Oka Principle for Stratified Elliptic Submersion}
\label{ssection:stratified}

We start with some preliminary considerations. Consider an automorphism $G$ with the property that $G - \operatorname{Id}$ is divisible by $f^3$.
The equation that we would like to solve is 
\begin{equation}
\label{factorize1} 
G(x) =  U^-(h_1(x)) \cdot U^+(h_2(x)) \cdot \ldots \cdot U^+(h_{2n}(x)), 
\end{equation}
where $h_i \in \O(X)$. 

In other words we would like to show that there exists a holomorphic mapping $g=(z_1, \ldots, z_{2n}) : X \rightarrow \mathbb{C}^{2n}$ such that the following diagram commutes:
\[\begin{xy} 
(0,0)*+{X}="a";(35,0)*+{\SAut(E)}="b";(35,20)*+{\mathbb{C}^{2n}}="c";
{\ar^-{G} "a";"b"};
{\ar^-{\psi_{2n}} "c";"b"};
{\ar@{-->}^-{g} "a";"c"};
 \end{xy}\] 
where $\psi_{2n}:\mathbb{C}^{2n} \rightarrow \SAut(E)$ is given by
\[(z_1, \ldots, z_{2n}) \longmapsto U^-(z_1) \cdot U^+(z_2) \cdot \ldots U^-(z_{2n-1}) \cdot U^+(z_{2n}).
\]
This is equivalent to finding a holomorphic section of the pullback of the bundle $\xi = (\mathbb{C}^{2n}, \psi_{2n}, \SAut(E))$ by $G$. Recall that the pullback bundle $G^{\ast}\xi = (G^{\ast}(\mathbb{C}^{2n}), G^{\ast}\psi_{2n}, X)$, where
\[G^{\ast}(\mathbb{C}^{2n}) = \{(x,z) \in X \times \mathbb{C}^{2n} : G(x) = \psi_{2n}(z)\} \subset X \times \mathbb{C}^{2n}
\]
and the projection is $G^{\ast}\psi_{2n}(x,z) = x$. We also define $G_\xi : G^{\ast}(\mathbb{C}^{2n}) \rightarrow \mathbb{C}^{2n}$ by $G_\xi(x,z) = z$. We obtain the following commutative diagram, so that a section of the left side bundle is equivalent to a lift $g$ in the diagram above.
\begin{equation} \label{diagram}    
\begin{xy}
(0,0)*+{X}="a";(35,0)*+{\SAut(E)}="b";(35,20)*+{\mathbb{C}^{2n}}="c";(0,20)*+{G^{\ast}(\mathbb{C}^{2n})}="d";
{\ar^-{G} "a";"b"};
{\ar^-{\psi_{2n}} "c";"b"};
{\ar^-{G_\xi} "d";"c"};
{\ar_-{G^{\ast}\psi_{2n}} "d";"a"};
\end{xy}
\end{equation}

The following technical lemma will be used to understand the restriction of the fibration $G^{\ast}\xi$ to $\{f = 0\}$.

\begin{Lem}\label{modulof3} 

Define the matrix 
\[\left(\begin{array}{cc} Q^k_{11} (z,f)& Q^k_{12} (z,f)
\\ Q^k_{21} (z,f)& Q^k_{22} (z,f) \end{array}\right) :=\left(\begin{array}{cc} 1 & z_2 f \\ 0 & 1 \end{array}\right) \cdot \left(\begin{array}{cc} 1 & 0 \\ z_3 f & 1 \end{array}\right) \cdot \ldots \cdot 
\left(\begin{array}{cc} 1 & z_{2k} f \\ 0 & 1 \end{array}\right) \cdot \left(\begin{array}{cc} 1 & 0 \\ z_{2k+1} f & 1 \end{array}\right).
\] 
Then 
\[Q^k_{11}(z,f) = 1 + f^2 \sum_{1\le i\le j\le k} z_{2i} z_{2j+1} + f^3 \widetilde{Q}^k_{11}(z,f).
\]

\begin{proof}
    
A straightforward induction shows not only the required formula (\ref{modulof3}), but also the formulas for the other entries modulo $f^3$:
\[Q^k_{12}(z,f) = f \sum_{1\le i\le k} z_{2i} + f^3 \widetilde{Q}^k_{12} (z,f),
\]
\[Q^k_{21}(z,f) = f \sum_{1\le  j\le k} z_{2j+1} + f^3 \widetilde{Q}^k_{21} (z,f),
\]
\[Q^k_{22}(z,f) = 1 + f^2 \sum_{1\le j < i\le k} z_{2i} z_{2j+1} + f^3 \Tilde{Q}^k_{22}(z,f).
\]

\end{proof}

\end{Lem}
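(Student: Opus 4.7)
The plan is to proceed by induction on $k$ and prove all four formulas (for $Q^k_{11}, Q^k_{12}, Q^k_{21}, Q^k_{22}$ modulo $f^3$) simultaneously. A simultaneous induction is needed because the matrix entries mix under multiplication: $Q^{k+1}_{11}$ depends on both $Q^k_{11}$ and $Q^k_{12}$, $Q^{k+1}_{22}$ on both $Q^k_{21}$ and $Q^k_{22}$, and so on, so the formula for $Q^k_{11}$ alone cannot be carried through the induction.

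For the base case $k=1$ one simply multiplies the two $2\times 2$ matrices corresponding to $z_2$ and $z_3$ and reads off $Q^1_{11}=1+z_2 z_3 f^2$, $Q^1_{12}=z_2 f$, $Q^1_{21}=z_3 f$, $Q^1_{22}=1$, in agreement with the four claimed formulas (with the sum defining $Q^1_{22}$ being empty). For the inductive step I would factor $Q^{k+1}=Q^k\cdot M_{k+1}$, where $M_{k+1}$ is the product of the next pair of factors; a direct computation gives $(M_{k+1})_{11}=1+z_{2k+2}z_{2k+3}f^2$, $(M_{k+1})_{12}=z_{2k+2}f$, $(M_{k+1})_{21}=z_{2k+3}f$ and $(M_{k+1})_{22}=1$. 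Expanding each entry of $Q^k\cdot M_{k+1}$ row by row, substituting the inductive hypothesis for the entries of $Q^k$, and discarding every contribution divisible by $f^3$ leaves only cross-terms where an $f^0$-factor meets an $f^2$-factor or where two $f^1$-factors meet.

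All that remains is combinatorial bookkeeping of the indices. For $Q^{k+1}_{11}$, for instance, one picks up $\sum_{1\le i\le j\le k} z_{2i}z_{2j+1}$ from $Q^k_{11}\cdot (M_{k+1})_{11}$ at order $f^0$, the single term $z_{2k+2}z_{2k+3}$ from $Q^k_{11}\cdot (M_{k+1})_{11}$ at order $f^2$, and $z_{2k+3}\sum_{i=1}^{k} z_{2i}$ from $Q^k_{12}\cdot (M_{k+1})_{21}$; these combine to $\sum_{1\le i\le j\le k+1} z_{2i}z_{2j+1}$ as required. The same style of check, with shifted index sets, handles $Q^{k+1}_{12}$, $Q^{k+1}_{21}$ and $Q^{k+1}_{22}$. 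The main obstacle, such as it is, is matching the summation ranges on the two sides of each of the four identities; no analytic input is needed, since the assertion is a purely formal identity in the polynomial ring $\mathbb{Z}[z_2,\dots,z_{2k+1}][f]$ modulo $f^3$.
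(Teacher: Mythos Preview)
Your proposal is correct and follows precisely the approach of the paper: the paper's proof consists of the single sentence that a ``straightforward induction'' establishes all four entry formulas modulo $f^3$ simultaneously, and you have supplied exactly that induction with the base case and the bookkeeping of the inductive step spelled out.
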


The next result is crucial for applying the Oka principle for stratified elliptic submersions. 

Recall that the set of holomorphically reachable points $\text{A}(b)$ in a fibre $A_b = \varphi^{-1}(b)$ of a fibration (meaning holomorphic map) $\varphi: A \to B$ over $b\in B$ is given by 
\[\text{A}(b) = \bigcup_{U \subseteq_{\rm{op}} X,\, b \in U,\, s \in \Gamma_{\text{hol}}(U, \varphi)}s(b)
\]
of the values of sections $\Gamma_{\text{hol}}(U,\varphi) = \{s:B \to A : s\, \text{is}\, \text{holomorphic},\, \varphi \circ s = \operatorname{Id}_B\}$. 
The set of holomorphically reachable points of a fibration is the union of the holomorphically reachable points of all fibres. For a holomorphic map one can speak of holomorphically or continuously reachable points. For the fibration for which we are using this notion, these two sets will coincide.

\begin{Prop}\label{ellipticsubmersion}

Let $f \in \O(X)$ be a holomorphic function on a complex space $X$ and let $n \ge 2$. Given a holomorphic map $G : X \to \SAut(E)$ with the property that $G - \operatorname{Id}$ is divisible by $f^3$, then the reachable points of the fibration $(G^{\ast}(\mathbb{C}^{2n}) \setminus \operatorname{Sing}, G^{\ast}\psi_{2n}, X)$ form a stratified elliptic submersion.

\end{Prop}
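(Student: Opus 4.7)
My plan is to exhibit an explicit stratification of $X$ together with fibre-dominating sprays on each stratum for the restriction of $\pi := G^{\ast}\psi_{2n}$ to its reachable locus, which is exactly what Definition~\ref{definstratifiedspray} requires. The natural choice is $X_{m}=X$, $X_{m-1}=\{f=0\}$, and then continuing inductively with the singular stratification of $\{f=0\}$; each $Y_{k}=X_{k}\setminus X_{k-1}$ is then a complex submanifold and $X_{0}$ is discrete.

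On the top stratum $Y_{m}=X\setminus\{f=0\}$ I would locally trivialize $E$ via the sections $s_{1},s_{2}$ of Section~\ref{section:pairs}, so that $\psi_{2n,x}(z)=S(x)\,Q^{n}(z,f(x))\,S(x)^{-1}$, with $Q^{n}$ the matrix product of Lemma~\ref{modulof3}. Since $f(x)\neq 0$ on $Y_{m}$, the substitution $w_{j}=f(x)z_{j}$ turns the fibre equation $\psi_{2n,x}(z)=G(x)$ into the classical LU-factorization equation in $\operatorname{SL}_{2}(\mathbb{C})$; for $n\geq 2$ this admits a fibre-dominating spray built from the $\mathbb{C}$-complete shear vector fields used in the solution of the holomorphic Vaserstein problem~\cite{Ivarsson:2012}, and pulling the spray back through the substitution and the conjugation by $S(x)$ gives a dominating spray for $\pi$ over a neighbourhood of any $x\in Y_{m}$. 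On a lower stratum $Y_{k}\subset\{f=0\}$, both $G(x)$ and $\psi_{2n,x}$ are identically the identity, so combining the mod-$f^{3}$ expansion of Lemma~\ref{modulof3} with $f^{3}\mid(G-\operatorname{Id})$ shows that the reduced equations of $G^{\ast}(\mathbb{C}^{2n})$ near $(x_{0},z_{0})$ with $x_{0}\in Y_{k}$ restrict on $Y_{k}$ to four explicit polynomial equations on $\mathbb{C}^{2n}$ arising as the coefficients of $f$ and $f^{2}$ in the four entries of $Q^{n}$; for $n\geq 2$ the smooth locus of this system admits enough $\mathbb{C}$-complete affine-linear flows simultaneously preserving all four polynomials to span the vertical tangent space of $\pi$, and Gromov's recipe assembles them into a fibre-dominating spray.

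The main obstacle will be the bookkeeping around $\operatorname{Sing}$ and the reachable locus: one has to verify that the reachable part of $G^{\ast}(\mathbb{C}^{2n})\setminus\operatorname{Sing}$ coincides with the smooth part of the reduced complex space (in particular, that any local section approaching $\{f=0\}$ from $Y_{m}$ forces its $\{f=0\}$-limit to satisfy the four equations above), check that the sprays on adjacent strata piece together into a stratified spray in the sense of Definition~\ref{definstratifiedspray}, and confirm that the bound $n\geq 2$ supplies enough independent vertical directions both for the LU-product submersion on $Y_{m}$ and for the four-equation system on the lower strata.
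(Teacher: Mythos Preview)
Your outline has the right overall architecture but misses two concrete steps that the paper's argument relies on.

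First, your stratification of $X\setminus\{f=0\}$ is too coarse. After the rescaling $w_{j}=f(x)z_{j}$ you are in the situation of \cite{IK2}, but there the fibration is \emph{not} elliptic over the whole of $X\setminus\{f=0\}$: one needs the intermediate stratum $\{a=0\}$, where $a$ is the $(1,1)$-entry of the matrix of $G$ in the $s_{1},s_{2}$-trivialization. The paper accordingly takes
\[
X = X_{0} \supset X_{1}=\{f=0\}\cup\{a=0\} \supset X_{2}=\{f=0\}\supset\emptyset,
\]
noting that $a\to 1$ as one approaches $\{f=0\}$ so that $\{a=0\}$ is closed in $X$. Your single top stratum $Y_{m}=X\setminus\{f=0\}$ would not admit a local dominating spray at points where $a=0$.

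Second, and more substantially, your analysis on $\{f=0\}$ is where the real work lies, and the ``four polynomial equations'' picture is not what actually governs the reachable locus. The paper first brings the outermost factors $U^{-}(z_{1})$ and $U^{+}(z_{2n})$ to the left-hand side; since $1+f^{2}a\neq 0$ near $\{f=0\}$, the off-diagonal entries then \emph{solve} for $z_{1}$ and $z_{2n}$ in terms of $z_{2},\ldots,z_{2n-1}$, and the $(2,2)$-entry is redundant by $\det=1$. What remains is the single $(1,1)$-equation, and only after dividing it by $f^{2}$ (which is exactly where both $f^{3}\mid(G-\operatorname{Id})$ and the notion of reachable point enter) does one obtain the description of the reachable fibre over $\{f=0\}$ as the level set of the explicit polynomial $\widetilde{Q}^{\,n-1}_{11}(z_{2},\ldots,z_{2n-1})$. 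This single polynomial is linear in each variable, so the fields $\Theta_{i,j}=\partial_{z_{i}}P\,\partial_{z_{j}}-\partial_{z_{j}}P\,\partial_{z_{i}}$ are globally integrable and span the tangent space off the origin, identifying $\operatorname{Sing}$ precisely with $\{z_{2}=\cdots=z_{2n-1}=0\}$. Your assertion that ``the smooth locus of this system admits enough $\mathbb{C}$-complete affine-linear flows simultaneously preserving all four polynomials'' is exactly the hard claim, and without the reduction to one hypersurface equation it is neither obvious nor, as stated, justified.
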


Here $\operatorname{Sing}$ denotes the set of singular points of the fibration $G^{\ast}\xi$ which is given by 
\[\operatorname{Sing} = \{(x,z) \in X \times \mathbb{C}^{2n} : G(x) = \psi_{2n}(z)\, \text{and}\, z_2 = z_3 = \ldots = z_{2n-1} = 0\}.
\]

\begin{proof}
    
The equation that we would like to solve and which describes the fibration $G^{\ast}\xi$ is 
\begin{equation}
\label{factorize} 
G(x) =  U^- (z_1(x)) \cdot U^+ (z_2(x)) \cdot \ldots \cdot U^+ (z_{2n}(x)), 
\end{equation}
for  $z_i \in \O(X)$. 

By construction the vector bundle $E$ is trivial over $X \setminus\{f=0\}$ (trivialized by the sections $s_1, s_2$). Therefore the map $G$ can be considered as a map $G: X \to \operatorname{SL}_2(\C)$ and the pair of unipotent automorphisms is given by
\[U^+ = \left(\begin{array}{cc} 1 & f(x) \\ 0 & 1 \end{array}\right) 
\]
and 
\[U^- = \left(\begin{array}{cc} 1 & 0 \\ f(x) & 1 \end{array}\right).
\]
Our problem is therefore equivalent on $X \setminus\{f = 0\}$ to the standard factorization problem solved by Ivarsson and the second author in \cite{IK2}. There, all the required claims of Theorem \ref{Thm:main} are proved. In the aforementioned paper, the entry $f(x)$ is actually constant $1$. However, by rescaling the restriction of the fibration to $X \setminus \{f = 0\}$, one finds the globally integrable vector fields along the fibres spanning the tangent space at every point. Remark that on each fibre, the number $f(x)$ is a nonzero constant and globally integrable fields stay globally integrable when multiplied by a constant. The stratification in \cite{IK2} is given by 
\[X_0 = X \setminus \{f = 0\} \supset X_1 = \{a = 0\} \supset \emptyset,
\]
where $a \in \O(X)$ is the left upper entry in the matrix 
\[G (x) = \left(\begin{array}{cc} a(x) & b(x) \\ c(x) & d(x) \end{array}\right).
\]
We refer to the paper \cite{IK2} for the details. 

In our case we define the stratification as 
\[X_0 = X \subset X_1 = (\{f = 0\} \cup \{a = 0\}) \supset X_2 = \{f = 0\} \supset \emptyset.
\]
Observe that on $\{f = 0\}$ the automorphism $G$ is equal to the identity. Thus, in our local trivialization the entry $a$ of the matrix tends to $1$ when approaching the analytic subset $\{f = 0\}$ of $X$. Therefore $\{a = 0\}$ is a closed analytic subset not only of $X \setminus \{f = 0\}$, but also of $X$. Hence, $X_1 = (\{f = 0\} \cup \{a = 0\})$ is a closed analytic subset of $X$.

Assuming the results from \cite{IK2}, now we only have to analyze our fibration on $\{f = 0\}$. Since being a stratified elliptic submersion is a local matter (except for the stratification), we go into the local setting   described before Proposition \ref{ellipticsubmersion}. On $U_i$, outside the analytic subset where the sections $s_1, s_2$ are linearly dependent, equation \ref{factorize} is equivalent to
\[S_i(x)^{-1}\operatorname{Mat}(G)(x)S_i(x) =  S_i(x)^{-1}U^- (z_1(x))S_i(x) \cdot S_{i}^{-1}(x)U^+ (z_2(x))S_i(x) \cdot \ldots,
\]
where $\operatorname{Mat}(G)(x) \in \operatorname{SL}_2(\C)$ is the matrix representing the automorphism $G(x)$ in local coordinates. Since $G - \operatorname{Id}$ is divisible by $f^3$, we will write 
\[\operatorname{Mat}(G)(x) = \operatorname{Id} + f^3(x) \left(\begin{array}{cc} a_1(x) & b_1(x) \\ c_1(x) & d_1(x) \end{array}\right).
\]
Hence
\[\operatorname{Id} + f^3(x) S_i^{-1}(x) \left(\begin{array}{cc} a_1(x) & b_1(x) \\ c_1(x) & d_1(x) \end{array}\right) S_i (x) = \left(\begin{array}{cc} 1 & 0 \\ z_1f(x) & 1 \end{array}\right) \cdot \left(\begin{array}{cc} 1 & z_2f(x) \\ 0 & 1 \end{array}\right) \cdot \ldots,
\]
or equivalently

\begin{equation}
\label{factorizelocal}    
\begin{split}
\operatorname{Id} + f^2(x) f_i (x) S_i^{\#}(x) & \left(\begin{array}{cc} a_1(x) & b_1(x) \\ c_1(x) & d_1(x) \end{array}\right) S_i (x) = \\ & = \left(\begin{array}{cc} 1 & 0 \\ z_1f(x) & 1 \end{array}\right) \cdot \left(\begin{array}{cc} 1 & z_2f(x) \\ 0 & 1 \end{array}\right) \cdot \ldots. 
\end{split}
\end{equation}

Since both sides of this last equation are holomorphic on all of $U_i$, the identity principle for holomorphic/continuous maps ($X \setminus \{f = 0\}$ is dense in $X$) shows that our problem (\ref{factorize}) is equivalent to this last equation, namely equation (\ref{factorizelocal}), on all of $U_i$ and not only where the the sections are linearly independent. 

We introduce the notation
\[\left(\begin{array}{cc} a(x) & b(x) \\ c(x) & d (x) \end{array}\right) := f_i (x) S_i^{\#}(x)\left(\begin{array}{cc} a_1(x) & b_1(x) \\ c_1(x) & d_1(x) \end{array}\right) S_i(x).
\]
Now the reachable points of $G^{\ast}(\xi)$ are described by 
\begin{equation}
\label{factorizelocal1}    
\operatorname{Id} + f^2(x) \left(\begin{array}{cc} a(x) & b(x) \\ c(x) & d (x) \end{array}\right) = \left(\begin{array}{cc} 1 & 0 \\ z_1f(x) & 1 \end{array}\right) \cdot \left(\begin{array}{cc} 1 & z_2f(x) \\ 0 & 1 \end{array}\right) \cdot \ldots . 
\end{equation}
To study this equation further, we bring the first and the last matrix from the right to the left hand side. We obtain

\begin{equation*}
\begin{split}
\left(\begin{array}{cc} 1 & 0 \\- z_1 f & 1 \end{array}\right) &
 \left(\begin{array}{cc} 1+ f^2 (x) a(x) & f^2 (x) b(x) \\  f^2 (x) c(x) & 1 + f^2(x)d(x) \end{array}\right)
 \left(\begin{array}{cc} 1 & -z_{2n} f \\ 0 & 1 \end{array}\right)
 = \\ & = \left(\begin{array}{cc} 1 & z_2f(x) \\ 0 & 1 \end{array}\right) \cdot \ldots 
 \cdot \left(\begin{array}{cc} 1 & 0 \\ z_{2n-1}f(x) & 1 \end{array}\right). 
\end{split}
\end{equation*}
With the notations of Lemma (\ref{modulof3}) we can write this in the following way:

 \begin{equation}
 \begin{split}
\label{factorizelocal2} & \left(\begin{array}{cc} 1 + f^2(x)a(x) & f^2 (x) b(x) - z_{2n}(1+ f^2(x)a(x))  \\  f^2 (x) c(x) - z_1 (1+f^2(x)a(x)) & \ast \end{array}\right) = \\ & \phantom{abcdefghijklmn} = \left(\begin{array}{cc} Q^{n-1}_{11}(z,f) & Q^{n-1}_{12}(z,f) \\ Q^{n-1}_{21} (z,f) & Q^{n-1}_{22} (z,f) \end{array}\right).
\end{split}
\end{equation}
Now we consider the neighborhood of a point $x$ with $f(x) = 0$ such that 
\[1 + f^2(x)a(x) \ne 0.
\]
In that neighborhood the four equations given by the entries of \eqref{factorizelocal2} are such that the first three equations imply the fourth by the determinant $1$ condition on both sides. This is the reason why in the matrix we wrote the unspecified entry as $\ast$.

\begin{flalign*}
1+f^2(x) a(x) = Q^{n-1}_{11} (z,f), \tag{1} \\
f^2(x) b(x) - z_{2n}(1+ f^2(x) a(x)) = Q^{n-1}_{12}(z,f), \tag{2} \\
f^2(x) c(x)- z_1 (1+ f^2(x) a(x)) =  Q^{n-1}_{21}(z,f), \tag{3} \\
\ast = Q^{n-1}_{22}(z,f). \tag{4} \\
\end{flalign*}

Since we have $1 + f^2(x)a(x) \neq 0$, equations (2) and (3) can be used to
express $z_1$ and $z_{2n}$ from the other variables $z_2, z_3, \ldots, z_{2n-1}$. This reduces our problem to one equation, namely equation (1).

We continue to analyze this equation. From Lemma \ref{modulof3}, we see that $Q^{n-1}_{11}(z,f) - 1$ is divisible by $f^2$. This is where we use the divisibility condition $f^3 \mid (G - \operatorname{Id})$. If this condition is not fulfilled, the equation has no holomorphic solution, i.e., the reachable points of the fibration $G^{\ast}(\xi)$ would be empty over points $x$ where $f(x) = 0$.

Define $\widetilde{Q}^{n-1}_{11} := (Q_{11}^{n-1}(z,f)-1)/f^2(x)$. Subtracting $1$ from both sides of equation (1) and dividing both of them by $f^2$, we have proved Lemma \ref{one_equation} below. Here dividing by $f^2$ is the second time when we use the fact that we consider the reachable points. Clearly in points $x$ where $f(x) = 0$, the left and right hand side of equation (1) will coincide, being equal to $1$ absolutely independent which values the $z_i$ take. The fibre of $G^{\ast}(\xi)$ is $\C^{2n}$ in these points; however, the reachable points are described by equation $\widetilde{Q}^{n-1}_{11}(z,f) = \text{const}$ and have the same dimension as neighboring fibres. Indeed, in an upcoming step, we will prove that the reachable points (minus the singularity set) form a submersion.
    
\begin{Lem} \label{one_equation} 

If $x$ is such that $f(x) = 0$, then in a neighborhood of $x$ the fibration $G^{\ast}(\xi)$ is isomorphic to the fibration $A : \C^{2n-2} \times X \to \C \times X$ given by
\[(z_2, z_3, \ldots, z_{2n-1},x) \mapsto (\widetilde{Q}^{n-1}_{11} (z,f),x).
\]

\end{Lem}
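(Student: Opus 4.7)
The plan is to read off the fibration structure of $G^{\ast}(\xi)$ directly from the matrix equation \eqref{factorizelocal2}. Since $f(x_0)=0$ gives $1+f^2(x_0)a(x_0)=1$, after shrinking to a small neighborhood $V$ of $x_0$ the scalar $1+f^2(x)a(x)$ is nowhere zero on $V$. On such a $V$, \eqref{factorizelocal2} is equivalent to the system of four scalar equations (1)--(4) between holomorphic functions of $(z,x)\in\mathbb{C}^{2n}\times V$.

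The first observation is that the determinant-one condition reduces this system to three equations. Both sides of \eqref{factorizelocal2} lie in $\operatorname{SL}_2(\mathbb{C})$: the right hand side is a product of unipotent triangular matrices, while the left hand side is obtained from $\operatorname{Mat}(G)(x)\in\operatorname{SL}_2(\mathbb{C})$ by conjugation and left/right multiplication by two unipotent triangular matrices. Consequently, as soon as (1), (2), (3) hold, equation (4) is forced. Using the unit $1+f^2 a$ on $V$, I would then solve (2) uniquely for $z_{2n}$ and (3) uniquely for $z_1$ as holomorphic functions of $(z_2,\ldots,z_{2n-1},x)$; these two solutions impose no constraint on the remaining coordinates, they simply express two of the $2n$ variables in terms of the remaining $2n-2$.

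What survives is equation (1), namely $1+f^2(x)a(x)=Q^{n-1}_{11}(z,f)$. By Lemma \ref{modulof3} together with the definition $\widetilde{Q}^{n-1}_{11}=(Q^{n-1}_{11}-1)/f^2$, subtracting $1$ from both sides and dividing by $f^2(x)$ (which is legal once we are on the reachable locus, in view of the hypothesis $f^3\mid(G-\operatorname{Id})$) reduces (1) to the single holomorphic equation
\[a(x)=\widetilde{Q}^{n-1}_{11}(z,f).\]
The forgetful projection $(z_1,\ldots,z_{2n})\mapsto(z_2,\ldots,z_{2n-1})$ therefore realizes a biholomorphism between $G^{\ast}(\mathbb{C}^{2n})|_V$ and the closed subvariety of $\mathbb{C}^{2n-2}\times V$ cut out by $\widetilde{Q}^{n-1}_{11}(z,f)=a(x)$; equivalently, this is the preimage under $A$ of the graph of $a$ inside $\mathbb{C}\times V$. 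Under this identification, $G^{\ast}(\xi)|_V\to V$ is exactly the pullback of $A$ along the holomorphic section $x\mapsto (a(x),x)$, which is the sense in which the two fibrations are isomorphic near $x_0$. I do not expect any serious obstacle: the steps are routine once the non-vanishing of $1+f^2 a$ on a neighborhood of $x_0$ and the $f^2$-divisibility of $Q^{n-1}_{11}-1$ (coming from $f^3\mid(G-\operatorname{Id})$) are in place, both of which have already been secured.
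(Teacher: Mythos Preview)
Your proposal is correct and follows essentially the same approach as the paper: the paper's proof of this lemma is actually given in the discussion immediately preceding the lemma's statement, where they use the nonvanishing of $1+f^2 a$ near $\{f=0\}$ to eliminate $z_1$ and $z_{2n}$ via equations (2) and (3), discard (4) by the determinant condition, and then subtract $1$ and divide by $f^2$ in equation (1) (invoking the reachable-points viewpoint) to arrive at $a(x)=\widetilde{Q}^{n-1}_{11}(z,f)$. Your write-up just makes the final identification of the fibration a bit more explicit than the paper does.
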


The claim about the singularity set follows now from the following lemma.
    
\begin{Lem}

The fibres of the function $\widetilde{Q}^{n-1}_{11}(z,f)$ are smooth exactly outside the zero point, i.e, the singularity set of $G^{\ast}(\xi)$ consists of those points where $z_2 = z_3 = \cdots = z_{2n-1} = 0$.

\end{Lem}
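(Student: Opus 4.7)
My plan is to work in the local model provided by Lemma \ref{one_equation}: near a point $x_0$ with $f(x_0)=0$, the fibration $G^{\ast}(\xi)$ is cut out in $\mathbb{C}^{2n-2}\times X$ by the single equation $\widetilde{Q}^{n-1}_{11}(z,f(x))=a(x)$, and the points where the projection onto $X$ fails to be a submersion are exactly the zeros of the $z$-gradient $\nabla_z \widetilde{Q}^{n-1}_{11}$. The task thus reduces to a gradient computation.

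By Lemma \ref{modulof3} together with the definition $\widetilde{Q}^{n-1}_{11}=(Q^{n-1}_{11}-1)/f^2$, one finds
\[
\widetilde{Q}^{n-1}_{11}(z,f)\;=\;\sum_{1\le i\le j\le n-1} z_{2i}z_{2j+1}\;+\;f\cdot R(z,f)
\]
for some polynomial $R$. At $f=0$ this is the bilinear form $P(z):=\sum_{i\le j} z_{2i}z_{2j+1}$, whose partials are the telescoping linear forms $\partial_{z_{2k}}P=\sum_{j\ge k} z_{2j+1}$ and $\partial_{z_{2k+1}}P=\sum_{i\le k} z_{2i}$. Subtracting consecutive equations in each family forces $z_{2j+1}=0$ for every $j$ and $z_{2i}=0$ for every $i$, so $\nabla P=0$ precisely at the origin. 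Holomorphic dependence of $\nabla_z \widetilde{Q}^{n-1}_{11}$ on $(z,f)$ then extends the conclusion to a whole neighborhood of $\{f=0\}$.

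Off $\{f=0\}$ the bundle $E$ is trivial and the fibration reduces to the standard $\operatorname{SL}_2$-factorization problem considered in \cite{IK2}, where the singular locus was already identified with $\{z_2=\cdots=z_{2n-1}=0\}$. Glueing the two local descriptions along $\{f=0\}$ gives the stated identification of $\operatorname{Sing}$ globally.

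The only genuine subtlety is conceptual rather than computational: if one worked with $Q^{n-1}_{11}$ instead of $\widetilde{Q}^{n-1}_{11}$, the $z$-gradient would vanish identically along $\{f=0\}$ and the argument would collapse. The divisibility condition $f^3\mid G-\operatorname{Id}$ (used already in the proof of Proposition \ref{ellipticsubmersion}) is what legitimizes passing from $Q^{n-1}_{11}-1$ to $\widetilde{Q}^{n-1}_{11}$ by dividing out $f^2$; once that step is in hand, the remainder is a routine gradient calculation on a nondegenerate bilinear pairing whose matrix is upper triangular with $1$'s on the diagonal.
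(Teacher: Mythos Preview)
Your argument is correct and follows essentially the same route as the paper: the telescoping computation of the partials of $P(z)=\sum_{i\le j}z_{2i}z_{2j+1}$ at $f=0$, together with the reference to \cite{IK2} for $f\neq 0$, is exactly how the paper handles the two regimes.

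One sentence, however, is not justified and should simply be dropped. The claim that ``holomorphic dependence of $\nabla_z\widetilde{Q}^{n-1}_{11}$ on $(z,f)$ extends the conclusion to a whole neighborhood of $\{f=0\}$'' is not a valid inference: continuity gives, for each fixed $z_0\neq 0$, only a $z_0$-dependent $f$-interval on which the gradient stays nonzero; new critical points may appear for $f\neq 0$ and escape to infinity as $f\to 0$. A one-variable model makes this clear: for $\widetilde{Q}(z,f)=z^2-fz^3$ one has $\partial_z\widetilde{Q}=z(2-3fz)$, so the critical set is $\{0\}$ at $f=0$ but $\{0,\,2/(3f)\}$ for $f\neq 0$. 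In your proof this does no harm, since your appeal to \cite{IK2} already covers every point with $f(x)\neq 0$ and your direct computation covers $f(x)=0$; the perturbation remark is simply redundant, and the paper does not make it. If you want to keep a remark in that spot, the useful one is the converse direction you left implicit: every monomial of $\widetilde{Q}^{n-1}_{11}$ has $z$-degree at least $2$ (since in $Q^{n-1}_{11}$ each $z_i$ appears only in the combination $z_i f$), so $z=0$ is genuinely singular for \emph{every} value of $f$.
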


\begin{proof}

Outside $\{f = 0\}$ the fibration is just a rescaling of the fibration considered in \cite{IK2}. We therefore omit the proof which can alternatively be achieved by an easy induction argument over the number $n$. Surjectivity of the fibration for $n \ge 4$ for example follows from the simple fact, that $\widetilde{Q}^{n-1}$ is for fixed $x$ a nonconstant polynomial, so it cannot omit any value.

On $\{f = 0\}$ we use the description of $\widetilde{Q}^{n-1}$ given by Lemma \ref{modulof3}. For $f(x) = 0$ we have $\widetilde{Q}^{n-1}_{11} = \sum_{1\le i\le j\le n-1} z_{2i} z_{2j+1}$. If we take the partial derivatives by the even variables, we find
\[\frac{\partial \widetilde{Q}^{n-1}_{11}}{\partial z_{2k}} = \sum_{l=k}^{n-1} z_{2l+1}.
\]
If these partial derivatives are all zero, then using an induction argument starting with $n-1$ down to $2$, shows that all odd variables have to be zero.

In the same way we conclude that if the partial derivatives by odd variables are all zero, then all even variables have to be zero.

\end{proof}

As the last step in the proof of Proposition \ref{ellipticsubmersion}, we show that the restriction of our fibration to $\{f = 0\}$ after removing the singularity set is elliptic. As explained in \cite{Ivarsson:2012} and \cite{IK2}, for any holomorphic function $P$, the vector fields $\Theta_{i,j}:= \dfrac{\partial P}{\partial z_i} \dfrac{\partial}{\partial z_j} - \dfrac{\partial P}{\partial z_j} \dfrac{\partial}{\partial z_i}$ ranging over all pairs of different variables $(i,j)$, span the tangent space at any smooth point of any fibre.

The function $P = \widetilde{Q}^{n-1}_{11}$ is linear in each variable separately, thus these vector fields for $\widetilde{Q}^{n-1}_{11}$ are globally integrable. Hence they are giving the standard Gromov spray.
This finishes the proof of Proposition \ref{ellipticsubmersion}.

\end{proof}

\subsection{The Topological Solution}
\label{ssection:topologicalULfactorization}

In this subsection we prove the following theorem.

\begin{Thm} \label{topologicalfactorization-into-replicas}

Let $U^+, U^-$ and $f \in \O(X)$ be as in the beginning of Section \ref{section:factorizationofsuitableautomorphisms} and consider a continuous bundle automorphism $G \in \SAut_{\operatorname{top}}(E)$ with the properties that $f^4 \mid G - \operatorname{Id}$ and $G$ is strongly null-homotopic. Then
\[G = U^-(h_1) \cdot U^+(h_2) \cdot \ldots \cdot U^-(h_{2k-1}) \cdot U^+(h_{2k})
\]
for some integer $k$ and continuous functions $h_i \in \O(X)$, $i = 1, 2, \ldots, k$.
 
\end{Thm}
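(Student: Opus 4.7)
My plan is to localize the problem via the trivialization of $E$ off the analytic set $\{f=0\}$, reduce the factorization to a matrix equation, solve it on the complement using the continuous Whitehead-type Lemma \ref{lem:novelties}, and then use the strong null-homotopy of $G$ to ensure continuous extension across $\{f=0\}$.

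First, I would work in the local trivialization by $s_1,s_2$ over $X \setminus \{f=0\}$, where the pair $(U^+,U^-)$ becomes conjugate to the standard pair with $f$ on the off-diagonal. The desired factorization is then equivalent to the matrix identity
\[
S^{-1} G S = \left(\begin{array}{cc} 1 & 0 \\ f h_1 & 1 \end{array}\right)\left(\begin{array}{cc} 1 & f h_2 \\ 0 & 1 \end{array}\right) \cdots \left(\begin{array}{cc} 1 & 0 \\ f h_{2k-1} & 1 \end{array}\right)\left(\begin{array}{cc} 1 & f h_{2k} \\ 0 & 1 \end{array}\right).
\]
The hypothesis $f^{4} \mid G-\operatorname{Id}$ makes $S^{-1} G S = \operatorname{Id} + f^2 A(x)$ with $A$ a continuous matrix extending to all of $X$, exactly as in the proof of Proposition \ref{ellipticsubmersion}. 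The formal reduction of that proof then expresses $h_1,h_{2k}$ from the off-diagonal equations $(2),(3)$, discards $(4)$ by the determinant-$1$ condition, and leaves the single essential equation $\widetilde Q^{n-1}_{11}(h,f)=a(x)$ to be solved continuously.

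For the second step I would apply Lemma \ref{lem:novelties}, the continuous Whitehead-type factorization for matrices near the identity, to $S^{-1} G S$. On $X \setminus \{f=0\}$ this is a null-homotopic continuous section of $\mathrm{SL}_2$ of the trivialized bundle, and the extra factor $f^2$ places it in the regime ``close to identity'' to which Lemma \ref{lem:novelties} directly applies. This yields continuous functions $h_i$ on $X \setminus \{f=0\}$ such that the displayed matrix identity holds; the improved divisibility hypothesis $f^4 \mid G-\operatorname{Id}$, rather than merely $f^3$, is what allows the entries $f h_i$ of the factors to inherit the ``near identity'' form uniformly and keeps the $h_i$ bounded.

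The main obstacle is extending the $h_i$ continuously across $\{f=0\}$. On that locus each replica $U^\pm(h_i)$ equals the identity for any value of $h_i$, so the matrix equation is trivially satisfied there; the real difficulty is assigning values that glue continuously to the ones produced off $\{f=0\}$. Here the strong null-homotopy of $G$ is decisive: the homotopy $G_t$ from $\operatorname{Id}$ to $G$ is identically $\operatorname{Id}$ on $\{f=0\}$, so applying Lemma \ref{lem:novelties} in parametrized form along $G_t$ produces a continuous family of factorizations starting from the trivial factorization at $t=0$ (with all $h_i \equiv 0$) and ending, at $t=1$, with a factorization of $G$ whose $h_i$ extend continuously to all of $X$ (taking value $0$ on $\{f=0\}$). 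Since each $U^\pm(h_i) = \operatorname{Id} + h_i N^\pm$ is globally defined on $E$ from continuous $h_i$, this yields the required global continuous factorization of $G$.
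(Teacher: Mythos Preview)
Your outline has a genuine gap: you treat Lemma~\ref{lem:novelties} as if it applied globally to $S^{-1}GS$ on $X\setminus\{f=0\}$, but its hypothesis is that the $(1,1)$-entry $a(x)\ne 0$. The identity $S^{-1}GS=\operatorname{Id}+f^2A$ does \emph{not} put the matrix ``close to the identity'' away from $\{f=0\}$: the function $f$ is unbounded on $X_f$, so there is no reason for the $(1,1)$-entry to avoid $0$ globally, and a single four-factor solution from Lemma~\ref{lem:novelties} is unavailable. The same objection defeats your ``parametrized'' use of the lemma along $G_t$: nothing forces $G_t(x)$ to stay in the region $\{a\ne 0\}$ for all $t$ and all $x$, so the proposed continuous family of factorizations need not exist. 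The divisibility $f^4\mid G-\operatorname{Id}$ controls behaviour \emph{near} $\{f=0\}$ only; it gives you nothing on the bulk of $X_f$.

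The paper's proof separates these two regimes. Near $\{f=0\}$ it does roughly what you sketch, but with care: it covers $\{f=0\}$ by finitely many sets on which Lemma~\ref{lem:novelties}(2) applies, globalizes each local four-factor solution via cutoff functions, and verifies (Lemma~\ref{divisible}) that after cutoff the product still satisfies $f^4\mid(\cdot-\operatorname{Id})$ so the procedure can be iterated over the cover. This reduces to the case where $G$ and its null-homotopy are identically $\operatorname{Id}$ on a whole neighbourhood of $\{f=0\}$. The remaining problem on $X_f$ is then a genuine Vaserstein-type factorization for a map $X_f\to\operatorname{SL}_2(\C)$ with no smallness assumption: one first uses Gram--Schmidt and the Special Whitehead Lemma~\ref{lem:novelties}(3) to reduce to $\operatorname{SU}_2$, then invokes the Calder--Siegel theorem to replace the null-homotopy by a \emph{uniform} one (preserving the ``identity near $\{f=0\}$'' property), and finally telescopes $G$ into a product of finitely many factors each uniformly close to $\operatorname{Id}$, to which Lemma~\ref{lem:novelties}(1) legitimately applies. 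Your proposal omits this entire second half, and the uniform-homotopy step is exactly what supplies the missing global control that your argument lacks.
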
  

We will adapt the strategy invented by Vaserstein \cite{Vaserstein} using the results of Calder and Siegel \cite{Calder/Siegel:1978}, \cite{Calder/Siegel:1980} on uniform homotopies. Since these results are for maps into $\operatorname{SL}_2(\C)$ and $\operatorname{SU}_2$, and not for maps into bundle automorphisms, we try to turn from the bundle situation to the trivial bundle over $X \setminus \{f = 0\}$ in such a way that our solutions constructed on $X \setminus \{f = 0\}$ extend continuously over $\{f = 0\}$ simply by being the zero solution, i.e., $h_i = 0$ when $f(x) = 0$.
For that we will first solve the problem locally around $\{f = 0\}$ by patching together local solutions of four factors. As said above, we want that in a neighborhood of $\{f = 0\}$ the solution has the property that $h_i (x) = 0$ when $f(x) = 0$ in order to move from the bundle situation to the trivial bundle over $X \setminus \{f = 0\}$. This naively seems trivial; however, the point $z_2 = z_3 = \cdots = z_{2n-1}$ is a singular point of the fibration $G^{\ast}\xi$ introduced  before Proposition
\ref{ellipticsubmersion}. This makes that property difficult to handle. In particular, it cannot be achieved when looking for holomorphic solutions (see Remark \ref{rem:holomorphicinterpolation}). We have to develop a special version of the Whitehead lemma working only for continuous functions, not over any commutative ring. This condition is important to us for the following reason. By the result of Calder and Siegel, any null-homotopy $\alpha_t$ of a map $\alpha : X \to \operatorname{SU}_2$ can be replaced by a uniform null-homotopy $\tilde\alpha_t$. Taking a closer look at the proof of Calder and Siegel, one can achieve that in points $x$ where the homotopy $\alpha_t(x)$ is constantly the identity ${\alpha}_t(x) = \operatorname{Id}$, the uniform homotopy has this property again, namely $\tilde{\alpha}_t (x) = \operatorname{Id}$.

We start with a lemma containing the above described novelties, e.g., our new Whitehead lemma.

\begin{Lem} \label{lem:novelties}
    
\begin{enumerate}
\item For 
\[A(x) = \left(\begin{array}{cc} a(x) & b(x) \\ c(x) & d(x) \end{array}\right) \in \operatorname{SL}_2 (\mathcal{C}^{\mathbb{C}}(X)),
\]
i.e., over the ring of complex valued continuous functions $\mathcal{C}^{\mathbb{C}}(X)$, and under the special condition $a(x) \neq 0$, one can solve the equation 
\[\left(\begin{array}{cc} a(x) & b(x) \\ c(x) & d(x) \end{array}\right) = \left(\begin{array}{cc} 1 & 0 \\ z_1(x) & 1 \end{array}\right)\cdot \ldots \cdot\left(\begin{array}{cc} 1 & z_4(x) \\ 0 & 1 \end{array}\right)\] 
with the interpolation condition
\[A(x) = \operatorname{Id} \implies z_1 (x) = \cdots = z_4 (x) = 0
\]
in the following way:
\begin{itemize}
    
\item $z_1(x) = \dfrac{c(x)-\dfrac{a(x)-1}{\sqrt{|a(x)-1|}}}{a(x)}$;

\item $z_2(x) = \sqrt{|a(x)-1|}$;

\item $z_3(x) = \dfrac{a(x)-1}{\sqrt{|a(x)-1|}}$;

\item $z_4(x) = \dfrac{b(x)-{\sqrt{|a(x)-1|}}}{a(x)}$.

\end{itemize}

\item In case of divisibility $f^3 \mid (A-\operatorname{Id})$, one can solve the equation
\[\left(\begin{array}{cc} 1+f^3 a & f^3 b \\ f^3 c & 1+f^3 d \end{array}\right) = \left(\begin{array}{cc} 1 & 0 \\ f z_1 & 1 \end{array}\right)\cdot \ldots \cdot \left(\begin{array}{cc} 1 & f z_4 \\ 0 & 1 \end{array}\right)\]
under the assumption that $1+f^3 a \ne 0$ with the interpolation condition 
\[A(x) = \operatorname{Id} \implies z_1 (x) = \cdots = z_4 (x) = 0
\]
by choosing
\begin{itemize}
    
\item $z_1 = \dfrac{f^2 c-\dfrac{af}{\sqrt{|af|}}}{1+f^3 a}$;

\item $z_2 = \sqrt{|af|}$;

\item $z_3 = \dfrac{af}{\sqrt{|af|}}$;

\item $z_4 = \dfrac{f^2 b-\sqrt{|af|}}{1+f^3 a}$.

\end{itemize}

\item $\operatorname{Special}$ $\operatorname{Whitehead}$ $\operatorname{Lemma}$. Writing a determinant $1$ invertible diagonal matrix $D$ as a product of four elementary matrices over the ring of complex valued continuous matrices with the interpolation condition 
\[D(x) = \operatorname{Id} \implies z_1 (x) = \cdots = z_4 (x) = 0
\]
is achieved as such. For $\lambda \in \mathcal{C}^{\mathbb{C}}(X)^{\ast}$, the following holds:

\begin{equation*}
\begin{split} D(x) & = \left(\begin{array}{cc} \lambda & \phantom{0} \\ \phantom{0} & \lambda^{-1} \end{array}\right) \\ & = \left(\begin{array}{cc} 1 & 0 \\ \frac{1-\lambda}{\sqrt{|\lambda-1|}} & 1 \end{array}\right) \cdot \left(\begin{array}{cc} 1 & \sqrt{|\lambda-1|} \\ 0 & 1 \end{array}\right) \cdot \left(\begin{array}{cc} 1 & 0 \\ \frac{\lambda-1}{\sqrt{|\lambda-1|}} & 1 \end{array}\right) \cdot \left(\begin{array}{cc} 1 & -\frac{\sqrt{|\lambda-1|}}{\lambda} \\ 0 & 1 \end{array}\right).
\end{split}
\end{equation*}

\end{enumerate}

\end{Lem}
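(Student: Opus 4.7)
The plan is to verify all three claims by direct matrix computation; the only nontrivial point is to check the continuity of the candidate solutions at the locus where $A(x) = \operatorname{Id}$. For part (1), I would expand the right-hand side: a short multiplication of the four elementary matrices shows that the product has $(1,1)$-entry $1 + z_2 z_3$, $(1,2)$-entry $z_4(1 + z_2 z_3) + z_2$, and $(2,1)$-entry $z_1(1 + z_2 z_3) + z_3$, while the $(2,2)$-entry is then forced by the determinant-one condition on both sides (using the special hypothesis $a \neq 0$). Inserting the proposed formulas, one finds $z_2 z_3 = \sqrt{|a-1|} \cdot \frac{a-1}{\sqrt{|a-1|}} = a-1$, so the $(1,1)$-entry collapses to $a$, and the formulas for $z_1$ and $z_4$ are tailored so that the remaining two entries reduce immediately to $c$ and $b$ after one line of cancellation.

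The interpolation property rests on a simple continuity observation. The function $\sqrt{|a-1|}$ is continuous and nonnegative on $\C$ with unique zero at $a=1$; the complex quotient $(a-1)/\sqrt{|a-1|}$ has modulus $\sqrt{|a-1|}$ and hence extends continuously by $0$ at $a=1$. Therefore $z_2$ and $z_3$ are continuous on $X$ and vanish wherever $A(x) = \operatorname{Id}$, and since $a(x) \neq 0$ by assumption, the formulas for $z_1$ and $z_4$ are continuous as well and share the same vanishing property.

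Part (2) I would treat as the identical telescoping computation in rescaled variables: writing $w_i := f z_i$, the product on the right-hand side has $(1,1)$-entry $1 + w_2 w_3 = 1 + f^2 z_2 z_3$, and the choice $z_2 = \sqrt{|af|}$, $z_3 = af/\sqrt{|af|}$ gives $z_2 z_3 = af$, matching the required $1 + f^3 a$; the formulas for $z_1$ and $z_4$ are then obtained by solving the analogous linear equations coming from the $(1,2)$- and $(2,1)$-entries. Part (3) is the specialization $b = c = 0$, $a = \lambda$ of part (1), which I would verify by direct multiplication of the four prescribed matrices. The main (mild) obstacle throughout is the continuous extension at the set $\{A = \operatorname{Id}\}$: it depends essentially on the use of $|\cdot|$ and real square roots, so the argument is irreducibly topological, which is precisely the feature that distinguishes this lemma from the classical Whitehead factorization over a ring and prevents it from being promoted to a holomorphic statement.
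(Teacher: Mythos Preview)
Your proposal is correct and is essentially what the paper intends: the lemma is stated in the paper without proof, the explicit formulas being offered as self-verifying. Your plan to expand the four-fold product, match the $(1,1)$, $(1,2)$, $(2,1)$ entries, and verify the continuous extension at $\{A=\operatorname{Id}\}$ via the bound $|z_3|=\sqrt{|a-1|}$ is exactly the intended verification, and your remark that the argument is irreducibly topological anticipates the paper's own observation (in the remark immediately following the lemma) that no holomorphic analogue can exist.

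One small caveat worth recording: part (3) is indeed the specialization $a=\lambda$, $b=c=0$ of part (1), and that specialization gives $z_1=(1-\lambda)/\bigl(\lambda\sqrt{|\lambda-1|}\bigr)$. The displayed first factor in the paper's part (3) omits the $\lambda$ in the denominator; if you carry out the direct multiplication you propose, you will find the $(2,1)$ entry equals $-(\lambda-1)^2/\sqrt{|\lambda-1|}$ rather than $0$. So your verification will flag a typo in the printed identity rather than confirm it; the specialization of part (1) gives the correct formula.
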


\begin{Rem}\label{rem:holomorphicinterpolation}

 The standard solution for the Whitehead lemma is the following:
\[D = \left(\begin{array}{cc} \lambda & \phantom{0} \\ \phantom{0} & \lambda^{-1} \end{array}\right) = \left(\begin{array}{cc} 1 & 0 \\ -\lambda^{-1} & 1 \end{array}\right) \cdot \left(\begin{array}{cc} 1 & \lambda-1 \\ 0 & 1 \end{array}\right) \cdot \left(\begin{array}{cc} 1 & 0 \\ 1 & 1 \end{array}\right) \cdot \left(\begin{array}{cc} 1 & \lambda^{-1}-1 \\ 0 & 1 \end{array}\right).
\]

This solution does work over any ring. However, the interpolation condition
is not satisfied. Moreover,  there cannot be a Whitehead Lemma for the ring of holomorphic functions on a Stein space $X$ satisfying the interpolation condition. Indeed, suppose $f \in \O(X)$ is an irreducible function on a Stein space $X$. Let $A(x)= \left(\begin{array}{cc} e^{f(x)} & \phantom{0} \\ \phantom{0} & e^{-f(x)} \end{array}\right) \in \operatorname{SL}_2(\O(X))$. This matrix $A$  is equal to the identity on $\{f = 0\}$. However the (1,1)-entry of its difference to the identity matrix is merely divisible by $f$ not by $f^2$. Similar as in Lemma \ref{modulof3}, one can prove that a product of any number of upper/lower replica $U^\pm (z_i (x))$ of the standard unipotent pair with the interpolation property $z_i = 0$ on $\{f = 0\}$ 
($f \mid z_i$) has the property that its entries at $(1,1)$ and $(2,2)$ are divisible by $f^2$.
Thus there cannot be a Special Whitehead Lemma as in $(3)$  for the ring $ \O(X)$.

\end{Rem}

\begin{proof}[Proof of Theorem \ref{topologicalfactorization-into-replicas}]

The first step is to multiply $G$ by continuous replica of $U^\pm (z)$, $z \in \mathcal{C}^{\mathbb{C}}(X)$, so that the resulting automorphism is identity in an open neighborhood of $\{f = 0\}$. 
For that we patch together local solutions using cutoff functions. We can assume that there is a finite covering $X = \cup_{i=1}^M U_i$ so that the bundle $E$ is trivial (even holomorphically) over $U_i$. 

Recall the notations from Section \ref{section:pairs}. In some trivialization $E \simeq U_{i} \times \mathbb{C}^2$, the sections $s_1, s_2$ of $E$ are represented by vectors $s_1^{i}(x), s_2^{i}(x) \in \C^2$ which may become linearly dependent in some points $x$ contained in the set $\{ f = 0\}$. We denote by $S_i(x)$ the matrix which has the above sections on its columns. We have that 
\[N^+(x) = S_i(x) \cdot\left(\begin{array}{cc} 0 & f(x) \\ 0 & 0 \end{array}\right)\cdot S_i^{-1}(x) = S_i(x) \cdot\left(\begin{array}{cc} 0 & f_i(x) \\ 0 & 0 \end{array}\right)\cdot S_i^{\#}(x)
\]
and
\[N^-(x) = S_i(x) \cdot\left(\begin{array}{cc} 0 & 0 \\ f(x) & 0 \end{array}\right)\cdot S_i^{-1}(x) = S_i(x) \cdot\left(\begin{array}{cc} 0 & 0 \\ f_i(x) & 0 \end{array}\right)\cdot S_i^{\#}(x),
\]
where $S_i^{\#}(x)$ is the adjoint of $S_i(x)$ and $f_i = \dfrac{f}{\det S_i} \in \O (U_i)$ (see Section \ref{section:pairs}). Remember that $U^{\pm} = \operatorname{Id} + N^{\pm}$ and $U^{\pm}(h) = \operatorname{Id} + h\cdot N^{\pm}$.

As in the beginning of the proof of Proposition \ref{ellipticsubmersion}, we make the following considerations. On $U_i$ outside the analytic subset, where the sections $s_1, s_2$ are linearly dependent equation \ref{factorize} is equivalent to the equation
\[S_i^{-1}(x)\operatorname{Mat}(G) (x)S_i(x) =  S_{i}^{-1}(x)U^-(z_1(x))S_i(x) \cdot S_{i}^{-1}(x)U^+ (z_2(x))S_i(x) \cdot \ldots .
\]
where $\operatorname{Mat}(G)(x) \in SL_2(\C)$ is the matrix representing the automorphism $G(x)$ in local coordinates. Since $G - \operatorname{Id}$ is divisible by $f^4$, we will write 
\[\operatorname{Mat}(G)(x) = \operatorname{Id} + f^4(x) \left(\begin{array}{cc} a_1(x) & b_1(x) \\ c_1(x) & d_1(x) \end{array}\right).
\]
Hence
\[\operatorname{Id} + f^4(x) \cdot S_i^{-1} (x) \left(\begin{array}{cc} a_1(x) & b_1(x) \\ c_1(x) & d_1(x) \end{array}\right) S_i (x) = \left(\begin{array}{cc} 1 & 0 \\ z_1f(x) & 1 \end{array}\right) \cdot \left(\begin{array}{cc} 1 & z_2f(x) \\ 0 & 1 \end{array}\right) \cdot \ldots,
\]
or equivalently
\begin{equation}
\label{factorizelocalcont}  
\begin{split}
\operatorname{Id} + f^3(x) \cdot f_i (x) S_i^{\#}(x) & \left(\begin{array}{cc} a_1(x) & b_1(x) \\ c_1(x) & d_1(x) \end{array}\right) S_i (x) = \\ & = \left(\begin{array}{cc} 1 & 0 \\ z_1f(x) & 1 \end{array}\right) \cdot \left(\begin{array}{cc} 1 & z_2f(x) \\ 0 & 1 \end{array}\right) \cdot \ldots . 
\end{split}
\end{equation}

Since both sides of this last equation are continuous on all of $U_i$, the identity principle for continuous  maps ($X \setminus \{ f=0\}$ is dense in $X$) shows that our problem (\ref{factorize}) is equivalent to this last equation (\ref{factorizelocalcont}) on all of $U_i$ (not only where the the sections are linearly independent). 


We introduce the notation
\[\left(\begin{array}{cc} a(x) & b(x) \\ c(x) & d (x) \end{array}\right) :=  S_i^{\#} (x) \left(\begin{array}{cc} a_1(x) & b_1(x) \\ c_1(x) & d_1(x) \end{array}\right) S_i (x)
\]
Now the reachable points of $G^{\ast}(\xi)$ are described by 
\begin{equation}
\label{factorizelocalcont1}    
\operatorname{Id} + f^3(x) f_i (x) \left(\begin{array}{cc} a(x) & b(x) \\ c(x) & d (x) \end{array}\right) = \left(\begin{array}{cc} 1 & 0 \\ z_1f(x) & 1 \end{array}\right) \cdot \left(\begin{array}{cc} 1 & z_2f(x) \\ 0 & 1 \end{array}\right) \cdot \ldots. 
\end{equation}

We now cover $\{f = 0\} $ by open sets $\Omega_i \Subset \widetilde{\Omega} _i$, $i = 1, 2, \ldots, M$, each relatively compact in the other, such that every $\widetilde{\Omega}_i$ is contained in $U_i$ and so that on $\widetilde \Omega_i$, the corresponding function $1 + f^3 (x) f_i(x) a(x) \ne 0$. In principle we should have denoted $a, b, c, d$ with an index $i$, but it should be clear in which trivialization we are working, so we omit the $i$ for better readability. 

We start on $\Omega_1$ and solve, using Lemma \ref{lem:novelties} (2), the equation
\[\left(\begin{array}{cc} 1+f^3 f_i a & f^3 f_i b \\ f^3 f_i c & 1+f^3 f_i d \end{array}\right) = \left(\begin{array}{cc} 1 & 0 \\ f z_1 & 1 \end{array}\right)\cdot \ldots \cdot\left(\begin{array}{cc} 1 & f z_4 \\ 0 & 1 \end{array}\right).\]
We choose

\begin{itemize}    
\item $z_1 = \dfrac{f^2 f_ic-\dfrac{a f_i f}{\sqrt{|a f_i f|}}}{1+f^3 f_i a}$;

\item $z_2 = \sqrt{|a f_i f|}$;

\item $z_3 = \dfrac{a f_i f}{\sqrt{|a f_i f|}}$;

\item $z_4 = \dfrac{f^2 f_i b-\sqrt{|af_i f|}}{1+f^3 f_i a}$.
\end{itemize}

In order to globalize, we select a continuous cutoff function $\chi : X \to [0,1]$ such that $\chi \equiv 1$ on $\Omega_1$ and $\chi \equiv 0$ on $X \setminus \widetilde{\Omega}_1$ and consider the global continuous bundle automorphisms

\begin{equation}
U_j := U_j^\pm ( \chi (x) z_j (x)) \label{def:unipotents}, \ j = 1, 2, 3, 4.
\end{equation}
Then on $\Omega_1$ we have
\begin{equation}
\label{a} G = U_1 \cdot U_2 \cdot U_3 \cdot U_4.
\end{equation} 
Let us mention the following property:
\begin{equation}
\label{keep_identity} G(x) = \operatorname{Id} \implies \text{the whole  homotopy}\, t \mapsto (U_j)_t := U_j^\pm (t \chi(x)z_j (x)) \equiv \operatorname{Id}.
\end{equation} 
This follows from the important interpolation property that if $G(x) = \operatorname{Id}$, then $z_i(x) = 0$, $\forall i$.

We will check in Lemma \ref{divisible} below that the product of the $U_j$ still satisfies the divisibility condition $f^4 \mid (U_1 \cdot U_2 \cdot U_3 \cdot U_4 - \operatorname{Id})$ globally on $X$. On $\Omega_1$ this follows from equation \ref{a}. Then replacing $G$ by $G_1 := G U_4^{-1} \cdot U_3^{-1} \cdot U_2^{-1} \cdot U_1^{-1}$ we have achieved that this new automorphism $G_1$ satisfies the same conditions as $G$ and, in addition, $G_1$ is identically $\operatorname{Id}$ on $\Omega_1$.

Now we repeat  the same process we did on $U_1$ on $U_2$, working with $G_1$ instead of $G$. We find four global uniform replica $U_i$ of $U^{\pm}$ such that on $\Omega_2$ we have
\begin{equation}\label{a1}
G_1 = U_1 \cdot U_2 \cdot U_3 \cdot U_4.
\end{equation} 

Since each $U_j$ and the whole homotopy $(U_j)_t$ is the identity when $G_1 (x) = \operatorname{Id}$, we find that $G_2 := G_1 U_4^{-1} \cdot U_3^{-1} \cdot U_2^{-1} \cdot U_1^{-1}$ is identical to $\operatorname{Id}$ not only on $\Omega_2$ (as equation \ref{a1} implies), but also on $\Omega_1 \cup \Omega_2$. As by Lemma \ref{divisible} below, the product of the $U_j$'s still satisfies the divisibility condition $f^4 \mid (U_1 \cdot U_2 \cdot U_3 \cdot U_4 - \operatorname{Id})$ globally on $X$, we have the same for $G_2$.

After $M$ steps we have replaced the original $G$ by a new null-homotopic automorphism, for simplicity call it again $G$, such that in addition to the divisibility of the whole null-homotopy $f^4 \mid G_t-\operatorname{Id}$, we have that on a whole open neighborhood $\Omega$ of $\{f = 0\}$, the automorphism $G$ is equal to $\operatorname{Id}$. 

By this last property, using continuity, we can find a neighborhood $\Omega_1$ with $\{f = 0\} \subset \Omega_1 \subset \Omega$ such that $G(\Omega_1) \subset U(\operatorname{Id})$, where $U(\operatorname{Id})$ is an open neighborhood of the identity section in $\Aut(E)$ which is fibrewise contractible to the identity section. For example one can take a neighborhood $U(\operatorname{Id})$ which is mapped by $\log$ biholomorphically to a convex neighborhood of the zero section in the corresponding Lie algebra bundle $\End^0(E)$ and take the contraction 
\[\alpha_t (x,A) := (x, \exp((1-t) \cdot \log(x, A)),\ x \in \Omega_1,\ A \in \SAut(E_x),\ t \in [0,1].
\]
Now we change the null-homotopy of $G$ to a homotopy which is identically $\operatorname{Id}$ in a whole neighborhood $\Omega_2$ of $\{f = 0\}$. For that we choose an open neighborhood $\Omega_2$ such that $\{f = 0\} \subset \Omega_2 \subset \overline{\Omega}_2 \subset \widetilde{\Omega}_2 \subset \Omega_1 \subset \Omega$ and a continuous function $\chi : X \to [0, 1]$ with $\chi\vert_{\Omega_2} \equiv 0$ and $\chi\vert_{X \setminus \widetilde{\Omega}_2} \equiv 1$. We define the new homotopy 
$G_t^{\text{new}}(x) : = \alpha_{1-\chi (x)}$. Since the function $\chi$ only differs from zero inside the set $\Omega_1$ where $G$ is the identity, we have not changed $G = G_0$. We have however changed the homotopy $G_t$ to be constant identity in the neighborhood $\Omega_2$ of $\{f = 0\}$, $\forall t \in [0,1]$. This allows us to turn from the bundle situation to the trivial bundle over $X_f := X \setminus \{f = 0\}$. Using the sections $s_1, s_2$ as coordinates we present $G_t$ by a map $\widehat{G}_t : X_f \to \operatorname{SL}_2(\C)$. Now we can follow the strategy of Vaserstein from \cite{Vaserstein} with the necessary adjustments. We need to keep all unipotent replicas constant identity on $\Omega_2 \setminus \{f = 0\} \subset X_f$ in order to be able to extend the solution continuously from $X_f$ to $X$. This is done by simply setting all functions $h$ occurring in the replicas $U^\pm (h)$ to be $0$ on $\{f = 0\}$.

Firstly, by multiplying with replicas, we want to change the map $\widehat{G}_t$ to a map into the maximal compact subgroup $\operatorname{SU}_2$. For that we use the Gram-Schmidt process to multiply by a matrix of the form 
\[\left(\begin{array}{cc} 1 & 0 \\ f (x) z_1(x,t) & 1 \end{array}\right),
\]
where $z_1(x, t) : X_f \times [0,1] \to \C$ is a continuous function in order to make the columns of $\widehat{G}_t$ orthogonal. The factor $f(x)$ is nonzero on $X_f$ and thus the function from Gram-Schmidt can just be divided by $f$ in order to get $z_1$. Observe that when $\widehat{G}_t$ is the identity, Gram-Schmidt gives $z_1 = 0$. Thus $z_1$ can be extended to all of $X$ by setting it $0$ on $\{f = 0\}$. Hence the matrix above gives rise to a continuous replica $U^-(z_1)$. Replacing $\widehat{G}_t$ by $U^-(z_1) \cdot \widehat{G}_t$, we can assume that the columns of the homotopy $\widehat{G}_t$ are orthogonal. 

Next we want to use the Whitehead Lemma to normalize the columns to length $1$. Here we use our Special Whitehead Lemma \ref{lem:novelties} $(3)$ which gives a product of four unipotent matrices with the property that when $\widehat{G}_t$ is the identity, then the functions $z_1 (x,t), \ldots, z_4 (x,t)$ are zero. As above we can divide the functions by $f$, extend them to all of $X$ by zero and thus obtain well defined continuous replica $U^\pm$ such that when we multiply $\widehat{G}_t$ by them we have made the columns of the homotopy of length $1$. So we can assume that the homotopy is unitary $\widehat{G}_t : X_f \to \operatorname{SU}_2$ and the homotopy is still identically $\operatorname{Id}$ on $\Omega_2 \setminus \{f = 0\}$.

Now we use the result of Calder and Siegel \cite{Calder/Siegel:1978}, \cite{Calder/Siegel:1980} to replace the homotopy $\widehat{G}_t$ by a uniform homotopy. Their result is for more general target spaces $Y$ instead of $\operatorname{SU}_2$. Their result requires that $Y$ needs to be compact and have finite fundamental group $\pi_1 (Y)$. Since $\operatorname{SU}_2$ is diffeomorphic to $S^3$, it is compact and simply connected. A bundle version of this result was proved by Hultgren and Wold in \cite{WoHu}.

We need an extra property of the uniform homotopy constructed by Calder and Siegel: \textit{When the homotopy $\widehat{G}_t$ in a point $x$ is constant the identity, then the uniform homotopy has the same property}. 

We believe this is true in the general setting. In our case $Y = \operatorname{SU}_2$, one can adapt the proof of Calder and Siegel in the following way. The main ingredient in their proof is the result that the pointed loop space $C_0 (S^1, \operatorname{SU}_2)$ is homotopy equivalent to a CW-complex $K$ of finite type (has finitely many cells in each dimension, the complex itself is infinite). For the case of spheres, which is our case since $\operatorname{SU}_2$ is diffeomorphic to $S^3$, this result about $C_0 (S^1, S^n)$ is very well explained in the book of Milnor \cite{Milnor:1963}. The complex $K$ has exactly one cell in each dimension $0, n-1, 2\,(n-1), 3\,(n-1), \ldots$ and $K = C_0 \cup C_{n-1} \cup C_{2(n-1)} \cup \ldots$. First one retracts the pointed loop space $C_0 (S^1, S^n)$ to a certain subset consisting of piecewise geodesic loops. Then in that subset, one finds the cells $C_i$. The cell of dimension zero is the constant identity loop. The cell $C_{n-1}$ consists of geodesic loops which pass exactly once through the north pole of the sphere (when the marked point is the south pole), the cell $C_{2(n-1)}$ consists of geodesic loops which pass exactly twice through the north pole of the sphere and so on. The null-homotopy $\widehat{G}_t$ is a continuous map $X_f \to C_0(S^1, S^3)$, it can be homotoped to a map to $K$ and, since our Stein space $X_f$ is a finite dimensional CW-complex, it can be further homotoped to a map into the $l$-skeleton of $K$, as soon as $l$ is greater or equal to the real dimension of $X_f$. Since the $l$-skeleton is compact, we have a uniform null-homotopy. All these homotopies one can perform in such a way that the zero cell $C_0$ of $K$ is not moved, in other words, where the homotopy was identically south pole it remains identically south pole.

Now by the uniformity of the homotopy $\widehat{G}_t : X_f \times [0,1] \to \operatorname{SU}_2$, given any open neighborhood $V$ of $\operatorname{Id}$, we find real numbers $0 = t_0 < t_1 < t_2 < \ldots < t_{n-1} < t_n = 1$ such that $\widehat{G}_{t_{i+1}} \cdot \widehat{G}_{t_i}^{-1} (X_f) \subset V$, $\forall i = 0, 1, \ldots, n-1$. For $V$ we choose the open subset of $\operatorname{SU}_2$ defined by 
\[\left\{\left(\begin{array}{cc} a & b \\c & d \end{array}\right) \in \operatorname{SU}_2  \, : \, a \ne 0\right\}.
\]
Then since $\widehat{G}_n = G$ and $\widehat{G}_0 = \operatorname{Id}$,

\begin{equation}\label{nearidentity}
\widehat{G} = (\widehat{G}_{n} \widehat{G}_{n-1}^{-1}) (\widehat{G}_{n-1} \widehat{G}_{n-2}^{-1}) \cdots (\widehat{G}_{i} \widehat{G}_{i-1}^{-1}) \cdots (\widehat{G}_1 \widehat{G}_0^{-1})
\end{equation}
and each expression $\widehat{G}_{i} \widehat{G}_{i-1}^{-1}$ in brackets can be written by Lemma \ref{lem:novelties} $(1)$ as a product of four unipotent matrices which are identically $\operatorname{Id}$ on $\Omega_2 \cap X_f$. These unipotent matrices give rise to unipotent replica $U^\pm (z)$, where the functions $z \equiv 0$ on $\Omega_2 \cap X_f$ and therefore can be continuously extended to all of $X$. Finally, this leads by equation \ref{nearidentity} to
\[G = \prod_{i=1}^n U_{i,1}^- U_{i,2}^+ U_{i,3}^- U_{i,4}^+.
\]
This finishes the proof of Theorem \ref{topologicalfactorization-into-replicas}.

\end{proof}

\begin{Lem} \label{divisible}

The unipotent replicas $U_1, \ldots, U_4$ defined in \ref{def:unipotents} satisfy $f^4 \mid (U_1 \cdot U_2 \cdot U_3 \cdot U_4 -\operatorname{Id})$. 
\end{Lem}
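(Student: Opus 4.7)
The plan is to split the analysis of $U_1 U_2 U_3 U_4 - \operatorname{Id}$ over $X$ into three regions according to the value of the cutoff $\chi$: the set $\Omega_1$ (where $\chi \equiv 1$), the transition annulus $\widetilde{\Omega}_1 \setminus \Omega_1$, and the exterior $X \setminus \widetilde{\Omega}_1$ (where $\chi \equiv 0$). The aim is to show that $(U_1 U_2 U_3 U_4 - \operatorname{Id})/f^4$ extends to a globally continuous endomorphism on $X$.

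The extreme regions are handled directly. On $\Omega_1$, each replica $U_j^\pm(\chi z_j)$ equals $U_j^\pm(z_j)$ with the $z_j$'s produced by Lemma \ref{lem:novelties}(2) (applied with $a, b, c, d$ replaced by $f_i a, f_i b, f_i c, f_i d$), so by construction the product equals $G$ in the frame $\{s_1, s_2\}$ and hence globally as bundle automorphisms; since $f^4 \mid (G - \operatorname{Id})$ by hypothesis, the quotient equals the continuous matrix $\tilde{M} := (G - \operatorname{Id})/f^4$ on $\Omega_1$. On $X \setminus \widetilde{\Omega}_1$ each $U_j$ reduces to $\operatorname{Id}$ and the quotient is zero.

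For the transition region I would exploit the freedom in choosing the auxiliary neighborhoods and arrange that $\widetilde{\Omega}_1 \cap \{f = 0\} \subseteq \Omega_1$, which is possible because $\{f = 0\}$ is closed: we may shrink $\widetilde{\Omega}_1$ around $\Omega_1$ so that no additional point of $\{f = 0\}$ is picked up outside $\Omega_1$. (The analogous refinement can be imposed uniformly on the finite family $\{(\Omega_i, \widetilde{\Omega}_i)\}_{i=1}^M$ in the proof of Theorem \ref{product of suitable continuous} without affecting any earlier step.) With this choice the transition annulus lies in $\{f \neq 0\}$, so $(U_1 U_2 U_3 U_4 - \operatorname{Id})/f^4$ is manifestly a continuous matrix there. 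Continuity across $\partial \Omega_1$ follows since $\chi \to 1$ forces the product to converge to $G$, and therefore the quotient to converge to $\tilde{M}$ from both sides; continuity across $\partial \widetilde{\Omega}_1$ follows since $\chi \to 0$ forces the product to converge to $\operatorname{Id}$ while $f$ remains bounded away from zero, so the quotient tends to $0$ on both sides. Patching the three continuous pieces yields the required divisibility.

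The main obstacle, and the reason for the careful choice of $\widetilde{\Omega}_1$, is the following. A direct matrix expansion in the standard trivialization on $U_1$ yields
\[U_1 U_2 U_3 U_4 - \operatorname{Id} = \frac{1}{\det^2 S_1}\bigl(R_- \widetilde{N}^- + R_+ \widetilde{N}^+ + \chi^2 a\, P + \tau\, Q\bigr),\]
using $N^\pm = f_i \widetilde{N}^\pm$, $\widetilde{N}^\mp \widetilde{N}^\pm = \det S_1 \cdot Q$ (resp.\ $P$), and the identifications $R_- = \chi^3 c + \chi(1-\chi^2)(c + afz_3)/\mu$, $R_+ = \chi^3 b + \chi(1-\chi^2)(b + afz_2)/\mu$, with a similarly explicit $\tau$. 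At $\chi = 1$ the inner matrix equals $S_1^\# \tilde{M} S_1$, so the factor $\det^2 S_1$ cancels automatically and $\tilde M$ remains. However, at a point where both $\det S_1 = 0$ and $f = 0$, direct computation (e.g.\ with $s_1 = (z,1), s_2 = (1,z)$ and $\tilde{M} = \epsilon E_{12}$) shows that for $\chi \in (0,1)$ the required cancellation genuinely fails, so the individual summands carry honest $1/\det^2 S_1$ singularities. Confining the transition region to $\{f \neq 0\}$ is precisely what avoids this obstruction.
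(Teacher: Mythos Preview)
Your argument has a genuine gap in the topological arrangement step. You claim that one can shrink $\widetilde{\Omega}_1$ so that $(\widetilde{\Omega}_1\setminus\Omega_1)\cap\{f=0\}=\emptyset$, but this forces $\Omega_1\cap\{f=0\}$ to be both open and closed in $\{f=0\}$. Whenever the hypersurface $\{f=0\}$ is connected and not contained in a single trivializing chart $U_1$ (the generic situation, and precisely the reason the cover has $M>1$ members), the set $\Omega_1\cap\{f=0\}$ is a proper nonempty open subset of $\{f=0\}$, so its relative boundary is nonempty and lies in $\partial\Omega_1$. Any open $\widetilde{\Omega}_1\supset\overline{\Omega}_1$ then necessarily contains these boundary points of $\{f=0\}$, defeating your refinement. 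The closedness of $\{f=0\}$, which you invoke, does not help: what you would need is $\{f=0\}\cap\overline{\Omega}_1\subset\Omega_1$, and that fails exactly at such boundary points.

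The paper avoids this difficulty entirely by a direct algebraic computation, valid for every value of $\chi\in[0,1]$ at once. Expanding the product of the four triangular matrices with entries $\chi z_j f$, the entries of $P_\chi-\operatorname{Id}$ are polynomials in $\chi$ whose coefficients involve $z_2z_3$, $z_2+z_4$, $z_1+z_3$, and $z_1z_2z_3$, $z_2z_3z_4$. The specific choices of the $z_j$ in Lemma~\ref{lem:novelties}(2) give the identities $z_2z_3=af_if$ and, via the $\chi=1$ equation, $f(z_2+z_4)=f^3f_ib-f^3z_2z_3z_4$, hence $f^2f_i\mid(z_2+z_4)$ (and symmetrically for $z_1+z_3$). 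These divisibilities propagate through every $\chi$-monomial, yielding $f^3f_i\mid(P_\chi-\operatorname{Id})$ on all of $U_i$; passing back to the original trivialization converts the factor $f_i$ into an additional $f$, giving $f^4$. Your computation in the last paragraph, purporting to show that the cancellation ``genuinely fails'' for $\chi\in(0,1)$, is therefore mistaken: you have not used these algebraic relations among the $z_j$, and once they are inserted the apparent $1/\det^2 S_1$ singularities cancel. The lemma is an algebraic fact about the particular square-root solution, not a topological accident at $\chi\in\{0,1\}$.
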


\begin{proof}
    
We have to prove that if the $z_i$'s are defined as 
\begin{itemize}
    
\item $z_1 = \dfrac{f^2 f_ic-\dfrac{a f_i f}{\sqrt{|a f_i f|}}}{1+f^3 f_i a}$,

\item $z_2 = \sqrt{|a f_i f|}$,

\item $z_3 = \dfrac{a f_i f}{\sqrt{|a f_i f|}}$,

\item $z_4 = \dfrac{f^2 f_i b-\sqrt{|af_i f|}}{1+f^3 f_i a}$,

\end{itemize}
in order to satisfy 
\begin{equation}
 \label{eq:product}   
\left(\begin{array}{cc} 1+f^3 f_i a & f^3 f_i b \\ f^3 f_i c & 1+f^3 f_i d \end{array}\right) = \left(\begin{array}{cc} 1 & 0 \\ f z_1 & 1 \end{array}\right) \cdot \ldots \cdot \left(\begin{array}{cc} 1 & f z_4 \\ 0 & 1 \end{array}\right) 
\end{equation}
then the difference of 
\[\left(\begin{array}{cc} 1 & 0 \\ \chi f z_1 & 1 \end{array}\right) \cdot \ldots \cdot \left(\begin{array}{cc} 1 &\chi f z_4 \\ 0 & 1 \end{array}\right)
\]
to the identity is divisible by $f^3 f_i$. Indeed, going back as from equation \ref{factorizelocalcont1} to equation \ref{factorizelocalcont} from the divisibility by $f_i$, we conclude divisibility by an extra factor $f$, so we get divisibility by $f^4$. We first calculate
\[\left(\begin{array}{cc} 1 & 0 \\ f z_1 & 1 \end{array}\right)\cdot \ldots \cdot \left(\begin{array}{cc} 1 & f z_4 \\ 0 & 1 \end{array}\right) =
\left(\begin{array}{cc} 1+f^2 z_2 z_3  & f z_2  + f z_4  + f^3 z_2 z_3 z_4  \\  f z_1  +f z_3  +  f^3 z_1 z_2 z_3  & \ast \end{array}\right).
\]
We omit to write the entry in position $(2,2)$ since the determinant $1$ condition will imply the divisibility for that entry given the right divisibility for the other three entries (see Lemma \ref{lem:last_entry} below). Next we calculate 
\[\begin{split}
& \left(\begin{array}{cc} 1 & 0 \\ \chi f z_1 & 1 \end{array}\right)\cdot \ldots \cdot \left(\begin{array}{cc} 1 &\chi f z_4 \\ 0 & 1 \end{array}\right) = \\ & =
\left(\begin{array}{cc} 1+f^2 z_2 z_3 \chi^2 & f z_2 \chi + f z_4 \chi + f^3 z_2 z_3 z_4 \chi^3 \\  f z_1 \chi +f z_3 \chi +  f^3 z_1 z_2 z_3 \chi^3 & \ast \end{array}\right).
\end{split}
\]
By the formulas for $z_2$ and $z_3$ above, we see that the product $z_2 z_3$ is divisible by $f_i f$. Thus the $(1,1)$ entry, namely $1+f^2 z_2 z_3$  has the right form. By equation (\ref{eq:product}), we find that
\[f^3 f_i b =  f z_2  + f z_4  + f^3 z_2 z_3 z_4.
\]
Since $f f_i \mid z_2 z_3$, we conclude that $f^2 f_i \mid z_2 + z_4$. This implies that the entry $(1,2)$, namely $ f z_2 \chi + f z_4 \chi + f^3 z_2 z_3 z_4 \chi^3$, is divisible by $f^3 f_i$. The $(2,1)$ entry is dealt with analogously. As remarked before, the right form of the not calculated $(2,2)$ entry follows from Lemma \ref{lem:last_entry}.

\end{proof}

\begin{Lem} \label{lem:last_entry}

If for a determinant $1$ matrix of continuous functions of the form  \[\left(\begin{array}{cc} 1+a_{11} & a_{12} \\ a_{21} & 1+a_{22} \end{array}\right)
\]
and a continuous function $g$ holds $g  \mid a_{11}$, $g \mid a_{12}$,
$g \mid a_{21}$, then $g \mid a_{22}$.

\begin{proof}

 The determinant $1$ condition is 
 \[(1+ a_{11}) \cdot (1+a_{22}) - a_{12} a_{21} = 1.
 \]
 Writing $a_{11}= g \Tilde{a}_{11}$, $a_{12}=  g \Tilde{a}_{12}$, $a_{21} = g \Tilde{a}_{21}$ for continuous functions $\Tilde{a}_{ij}$, we find after some algebraic manipulations 
\[a_{22} = g(g \Tilde{a}_{12} \Tilde{a}_{21} - \Tilde{a}_{11} a_{22} -\Tilde{a}_{11}).\]
 This shows that $a_{22}$ is divisible by $g$.
 
\end{proof}

\end{Lem}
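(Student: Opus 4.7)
The plan is to exploit the determinant $1$ relation algebraically, taking care not to divide by $(1+a_{11})$ (which could vanish at some points and cause continuity issues), but instead to isolate $a_{22}$ on one side while keeping every term that involves $a_{22}$ written out explicitly.

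First I would write out the determinant condition as
\[(1+a_{11})(1+a_{22}) - a_{12} a_{21} = 1,\]
expand to
\[a_{22} + a_{11} + a_{11} a_{22} - a_{12} a_{21} = 0,\]
and then rearrange to isolate the ``free'' occurrence of $a_{22}$:
\[a_{22} = a_{12} a_{21} - a_{11} - a_{11} a_{22}.\]
Note that I deliberately keep $a_{11} a_{22}$ on the right-hand side rather than factoring $a_{22}(1+a_{11})$ and dividing, because continuity of the quotient would require $1+a_{11}$ to be nonvanishing.

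Next I would use the divisibility hypotheses to write $a_{11} = g\tilde a_{11}$, $a_{12} = g\tilde a_{12}$, $a_{21} = g\tilde a_{21}$ for continuous functions $\tilde a_{ij}$. Substituting into the identity above gives
\[a_{22} = g^2 \tilde a_{12} \tilde a_{21} - g\tilde a_{11} - g\tilde a_{11} a_{22} = g\bigl(g\tilde a_{12}\tilde a_{21} - \tilde a_{11} - \tilde a_{11} a_{22}\bigr).\]
The expression in parentheses is a continuous function of $x$, so this exhibits $a_{22}$ as $g$ times a continuous function, i.e.\ $g \mid a_{22}$.

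The ``hard part'' is essentially trivial here; the only subtle point is avoiding the temptation to solve for $a_{22}$ by dividing by $1+a_{11}$, which is not allowed in the continuous category without a nonvanishing hypothesis. The chosen rearrangement sidesteps this entirely, and no properties of $g$ beyond it being a continuous function (for which $g \mid h$ just means $h = g\tilde h$ for some continuous $\tilde h$) are needed.
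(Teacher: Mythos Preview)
Your proof is correct and follows exactly the same approach as the paper: expand the determinant relation, isolate $a_{22}$ without dividing by $1+a_{11}$, substitute $a_{ij}=g\tilde a_{ij}$, and arrive at the identical formula $a_{22}=g(g\tilde a_{12}\tilde a_{21}-\tilde a_{11}a_{22}-\tilde a_{11})$. Your added remark about why division by $1+a_{11}$ must be avoided is a worthwhile clarification that the paper leaves implicit.
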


\subsection{Application of the Oka Principle for Stratified Elliptic Submersions}
\label{ssection: holomorphicULfactorization}

\begin{proof}[Proof of Theorem \ref{factorization-into-replicas}]
 
 We have to find some $n\in \N$ and a holomorphic section $S_{\text{hol}}$ in the fibration $G^\ast (\xi) : G^\ast (\C^{2n}) \to X $. By Proposition \ref{ellipticsubmersion} the reachable points of the fibration $(G^{\ast}(\mathbb{C}^{2n}) \setminus \operatorname{Sing}, G^{\ast}\psi_{2n}, X)$
form a stratified elliptic submersion. By Theorem \ref{topologicalfactorization-into-replicas}, we find a $k\in \N$ and a continuous section $S_{\text{cont}}$ of the fibration $G^\ast (\xi) : G^\ast (\C^{2k}) \to X $. Of course this section will meet the singularity set, since due to our method of proof we took care of the fact that on $\{f = 0\}$, it goes through the singularity set. We add to the $2k$ continuous replica whose product gives $G$, four more replica, namely $U^-(1), U^+(0), U^-(-1), U^+(0)$ whose product is identity $U^-(1) \cdot U^+(0) \cdot  U^-(-1) \cdot U^+(0) = \operatorname{Id}$ and thus we get a continuous section $S_{\text{cont}}$ of $G^\ast (\xi) : G^\ast (\C^{2k+4}) \to X $. This section avoids the singularity set over any point of $X$, since the singularity set is the set where $z_2 = z_3 = \ldots z_{2k+3} = 0$ and we have $z_{2k+3} \equiv -1$. Now setting $n = k+2$, we can use Proposition \ref{ellipticsubmersion} together with the Oka Principle for Stratified Elliptic Submersions, namely Theorem \ref{t:forstneric} to find a holomorphic section $S_{\text{hol}}$ in the fibration $G^\ast (\xi) : G^\ast (\C^{2n}) \to X$ homotopic to $S_{\text{cont}}$. This finishes the proof of Theorem \ref{factorization-into-replicas}.

\end{proof}

\begin{proof}[Proof of Theorem \ref{Thm:main}]

 The necessity of being a null-homotopy is easily seen. Indeed, for a unipotent bundle automorphism $U$ we have $U = \operatorname{Id} + N$, where $N$ is nilpotent. The homotopy $t \mapsto \operatorname{Id} + t N$ connects $U$ to the identity. Thus any product of unipotent bundle automorphisms is null-homotopic.
 
 The sufficiency part follows from Proposition \ref{existence of pairs}, Theorem \ref{product of suitable} and Theorem \ref{factorization-into-replicas}.

\end{proof}

\begin{Thm}\label{thm:uniform bound}

There exists a natural number $K$ (depending on the dimension $n$ of the Stein space $K = K(n)$) such that for any Stein space $X$ with $\dim(X) = n$ and any rank $2$ holomorphic vector bundle $E \to X$ over $X$, any null-homotopic holomorphic vector bundle automorphism $F \in \SAut(E)$ is 
a  product of unipotent holomorphic automorphisms $u_i \in \U(E)$, $i = 1, 2, \ldots, K$,
\[F(x) = u_1 (x) \cdot u_2 (x) \cdot \ldots \cdot u_K (x).
\]

\begin{proof}
    


By contradiction, let us assume that there are infinitely many Stein spaces $X_i$ of dimension $n$, together with holomorphic vector bundles $E_i \to X_i$ and null-homotopic holomorphic vector bundle automorphism $F_i \in \SAut(E_i)$ which are products of not less than $i$ unipotent automorphisms. Then taking the disjoint union, namely $X = \dot\cup X_i$ (which is of dimension $n$) and $E =\dot\cup E_i$, all $F_i$'s define together a unipotent bundle automorphism $F$ of $E$. By Theorem \ref{Thm:main}, the automorphism $F$ would factorize into finitely many, say $K$, unipotent automorphism, which by restricting to $X_i$ would mean the same for each $F_i$. This contradicts our assumption on the $F_i$.

\end{proof}

\end{Thm}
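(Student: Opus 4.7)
The plan is to argue by contradiction and to reduce Theorem \ref{thm:uniform bound} to Theorem \ref{Thm:main} via a disjoint union construction. Suppose no such uniform $K = K(n)$ exists. Then for every positive integer $i$ one can find a Stein space $X_i$ of dimension at most $n$, a rank $2$ holomorphic vector bundle $E_i \to X_i$, and a null-homotopic $F_i \in \SAut(E_i)$ that cannot be written as a product of fewer than $i$ unipotent holomorphic automorphisms.

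Next I would assemble these counterexamples into a single object. Form the countable disjoint union $X := \bigsqcup_{i=1}^\infty X_i$, endowed with its natural structure of a reduced complex space. Since each $X_i$ is Stein and a (countable) disjoint union of Stein spaces is again Stein (each component has its own strictly plurisubharmonic exhaustion; one shifts and sums them to obtain a strictly plurisubharmonic exhaustion of the union), $X$ is a Stein space of dimension $n$. Analogously, $E := \bigsqcup_{i=1}^\infty E_i \to X$ is a rank $2$ holomorphic vector bundle. The automorphisms $F_i$ combine to a single $F \in \SAut(E)$, and the given null-homotopies $(F_i)_t$ of $F_i$ assemble into a homotopy of $F$ to $\operatorname{Id}_E$ (continuity on $X$ is automatic since the topology is the disjoint union topology). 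Hence $F$ is null-homotopic.

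Now I would apply the main theorem, Theorem \ref{Thm:main}, to $F$. It produces a finite integer $K$ and unipotent holomorphic automorphisms $u_1,\ldots,u_K \in \U(E)$ with
\[ F(x) = u_1(x) \cdot u_2(x) \cdot \ldots \cdot u_K(x),\qquad x\in X. \]
Restricting each $u_j$ to the open-and-closed component $X_i \subset X$ gives unipotent holomorphic automorphisms of $E_i$ whose product equals $F_i$. Therefore every $F_i$ admits a factorization of length at most $K$, contradicting the choice $i > K$ in the assumed sequence. This contradiction establishes the existence of a universal bound $K = K(n)$.

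The only genuinely delicate point in the plan is verifying that the disjoint union $X$ is again Stein and of dimension $n$; this is where one must be slightly careful, because one needs a \emph{countable} disjoint union (and not merely a finite one) to accommodate an infinite family of alleged counterexamples. Once this is in hand, the argument is purely formal: Theorem \ref{Thm:main} applied once on the union automatically yields a uniform bound for the whole family, which is exactly the contradiction we seek.
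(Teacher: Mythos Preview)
Your proposal is correct and follows essentially the same approach as the paper: assume no uniform bound, take a countable disjoint union of counterexamples, apply Theorem \ref{Thm:main} once to the union, and restrict the resulting factorization to each component to obtain a contradiction. You add a bit more justification (that the countable disjoint union is Stein and that the assembled $F$ is null-homotopic), which the paper leaves implicit, but the argument is identical.
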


\begin{Rem} 

To establish the optimal number $K(n)$ is very difficult. For Stein spaces of dimension $1$, all vector bundles are trivial and thus the optimal number is the same as for maps $X \to \operatorname{SL}_2(\C)$. This number has been found to be $4$ in \cite{IK2}. Thus we see that $K(1) = 4$.

For Stein spaces of dimension $2$, by our method we would find a trivialization of the bundle consisting of $3$ subsets together with suitable pairs $U^-, U^+$. Then we would factorize the automorphisms in a product of three automorphism $G_i$ each suitable for a pair. Presumably the methods in \cite{IK2} would yield a factorization into five unipotent factors giving an upper bound $K(2) \le 15$. For that, one would need to analyze the fibration from \ref{diagram} for an odd number of factors, namely $5$ factors. In the present article, we have chosen to analyze only the case of even number of factors since our main purpose was to prove the existence of a factorization, not to find the optimal number of factors.
    
\end{Rem}

\end{document}